\theoremstyle{plain}
\newtheorem{theorem}{Theorem}[section]
\newtheorem{lemma}[theorem]{Lemma}
\newtheorem{proposition}[theorem]{Proposition}
\newtheorem{corollary}[theorem]{Corollary}
\theoremstyle{definition}
\newtheorem{definition}[theorem]{Definition}
\newtheorem{example}{Example}
\newtheorem{remark}{Remark}
\newcommand{\norm}[1]{\left\Vert#1\right\Vert_2}
\newcommand{\op}[1]{\left\Vert#1\right\Vert}
\newcommand{\fnorm}[1]{\left\Vert#1\right\Vert_F}
\newcommand{\abs}[1]{\left\vert#1\right\vert}
\newcommand{\Real}{\mathbb R}
\newcommand{\W}{\mathfrak{W}}
\def\argmax{\mathop{\rm argmax}}
\def\ang{\mbox{Angle}}
\def\tr{\mbox{trace}}
\def\diag{\mbox{diag}}
\def\rank{\mbox{rank}}
\def\diag{\mbox{Diag}}
\def\av{\mathbf a}
\def\bv{\mathbf b}
\def\cv{\mathbf c}
\def\dv{\mathbf d}
\def\ev{\mathbf e}
\def\fv{\mathbf f}
\def\gv{\mathbf g}
\def\hv{\mathbf h}
\def\gv{\mathbf g}
\def\kv{\mathbf k}
\def\nv{\mathbf n}
\def\pv{\mathbf p}
\def\qv{\mathbf q}
\def\rv{\mathbf r}
\def\tv{\mathbf t}
\def\uv{\mathbf u}
\def\vv{\mathbf v}
\def\wv{\mathbf w}
\def\xv{\mathbf x}
\def\zv{\mathbf z}
\def\Av{\mathbf A}
\def\Cv{\mathbf C}
\def\Dv{\mathbf D}
\def\Hv{\mathbf H}
\def\Iv{\mathbf I}
\def\Jv{\mathbf J}
\def\Kv{\mathbf K}
\def\Lv{\mathbf L}
\def\Mv{\mathbf M}
\def\Ov{\mathbf O}
\def\Qv{\mathbf Q}
\def\Rv{\mathbf R}
\def\Sv{\mathbf S}
\def\Uv{\mathbf U}
\def\Vv{\mathbf V}
\def\Wv{\mathbf W}
\def\Xv{\mathbf X}
\def\Yv{\mathbf Y}
\def\Zv{\mathbf Z}
\newtheorem{Assumption}{Assumption}
\newcommand{\zetav}{\mbox{\boldmath$\zeta$}}
\newcommand{\iotav}{\mbox{\boldmath{$\iota$}}}
\newcommand{\muv}{\mbox{\boldmath{$\mu$}}}
\newcommand{\nuv}{\mbox{\boldmath{$\nu$}}}
\newcommand{\xiv}{\mbox{\boldmath{$\xi$}}}
\newcommand{\Phiv}{\mbox{\boldmath{$\Phi$}}}
\newcommand{\psiv}{\mbox{\boldmath{$\psi$}}}
\newcommand{\omegav}{\mbox{\boldmath{$\omega$}}}
\newcommand{\Omegav}{\mbox{\boldmath{$\Omega$}}}
\newcommand{\Sigmav}{\mbox{\boldmath{$\Sigma$}}}
\newcommand{\Lambdav}{\mbox{\boldmath{$\Lambda$}}}
\newcommand{\Xiv}{\mbox{\boldmath{$\Xi$}}}
\newcommand{\Ac}{\mathcal{A}}
\newcommand{\Bc}{\mathcal{B}}
\newcommand{\Dc}{\mathcal{D}}
\newcommand{\Lc}{\mathcal{L}}
\newcommand{\Nc}{\mathcal{N}}
\newcommand{\Sc}{\mathcal{S}}
\newcommand{\Tc}{\mathcal{T}}
\newcommand{\Uc}{\mathcal{U}}
\newcommand{\Vc}{\mathcal{V}}
\newcommand{\Xc}{\mathcal{X}}
\newcommand{\Yc}{\mathcal{Y}}
\def\1v{\mathbf 1}
\def\0v{\mathbf 0}
\def\mdp{\wv_{\textup{MDP}}}
\def\smdp{\wv_{\textup{SMDP}}}
\begin{document}

\begin{frontmatter}
\title{Optimal Test-Data Piling in HDLSS Classification with Covariance Heterogeneity}
\runauthor{Kim, Ahn, and Jung}
\runtitle{Optimal Test-Data Piling with Covariance Heterogeneity}

\begin{aug}
\author[A]{\fnms{Taehyun}~\snm{Kim}\ead[label=e1]{tk3036@columbia.edu}},
\author[B]{\fnms{Jeongyoun}~\snm{Ahn}\ead[label=e2]{jyahn@kaist.ac.kr}}
\and
\author[C]{\fnms{Sungkyu}~\snm{Jung}\ead[label=e3]{sungkyu@snu.ac.kr}}
\address[A]{Department of Statistics,
Columbia University\printead[presep={,\ }]{e1}}

\address[B]{Department of Industrial and Systems Engineering,
KAIST\printead[presep={,\ }]{e2}}

\address[C]{Department of Statistics and Institute for Data Innovation in Science, Seoul National University\printead[presep={,\ }]{e3}}
\end{aug}

\begin{abstract}
This work addresses a longstanding question in high-dimensional linear classification: Is perfect classification achievable in heterogeneous covariance structures? We focus on the phenomenon of data piling, where projected data points collapse onto discrete values. We provide a comprehensive characterization of two distinct types of data piling. The first type of data piling refers to the phenomenon where projecting the training data onto a certain direction yields exactly two distinct values—one for each class. This occurs universally when the data dimension $p$ exceeds the sample size $n$. The second type concerns independent test data and arises asymptotically as $p \to \infty$ with fixed $n$. While previous work established the existence of such double data piling under homogeneously spiked covariance structures using negatively ridged classifiers, our analysis extends to the more general and realistic case of heterogeneous covariance. We identify an optimal direction among all piling directions that maximizes the separation between test data piles, which is called the Second Maximal Data Piling direction. An algorithm based on data splitting is proposed to compute this direction using only training data. Our analysis reveals a key insight: the main obstacle to discovering this direction is the imbalance of the tail eigenvalues, rather than differences in spike count, spike magnitude, or the alignment of leading eigenspaces. Extensive simulations confirm our theoretical results and demonstrate the effectiveness of the proposed classifier across a wide range of high-dimensional scenarios.
\end{abstract}

\begin{keyword}[class=MSC]
\kwd[Primary ]{62H30}
\kwd{62H25}
\kwd[; secondary ]{62J07}
\end{keyword}

\begin{keyword}
\kwd{High dimension low sample size}
\kwd{Classification}
\kwd{Maximal data piling}
\kwd{Double data piling}
\kwd{Spiked covariance model}
\kwd{High-dimensional asymptotics}
\end{keyword}

\end{frontmatter}


\section{Introduction}\label{sec:intro}
Recently, it has been observed that extremely complicated models which interpolate training data can also achieve nearly zero generalization error \citep{Zhang2017, Belkin2019}. This contradicts an important concept of the classical statistical learning framework in which a careful regularization is inevitable to reduce generalization error. This phenomenon, called \textit{benign overfitting}, has been confirmed both empirically and theoretically in the context of linear regression, and it turned out that a nearly zero or even negative ridge parameter can be optimal in the overparameterized regime \citep{Bartlett2020, Kobak2020, Holzmuller2020, Hastie2022}. 

In this work, we investigate the high-dimensional binary classification problem in which benign overfitting occurs under generalized heterogeneous covariance models. To gain theoretical insight into this phenomenon, we take the High-Dimension Low-Sample-Size (HDLSS) asymptotic regime of \citet{Hall2005} where the dimension of data $p$ tends to grow while the sample size $n$ is fixed. 
HDLSS data have often been found in many scientific fields, such as bioinformatics including next-generation sequencing data analysis, chemometrics, and image processing. Such HDLSS data are often best classified by linear classifiers \citep{Shao2011, Qiao2009, Niyazi2024}.
For binary classification with $p > n$, \citet{Ahn2010} observed the data piling phenomenon, that is, projections of training data onto a direction vector are identical for each class. Among such directions, the \textit{maximal data piling} direction uniquely gives the largest distance between the two piles of training data. The maximal data piling direction is defined as 
\begin{align}\label{eq:MDP}
    \mdp := \argmax\limits_{\wv : \| \wv \|_2 = 1}{\wv^\top\Sv_{B}\wv } \text{ subject to } \wv^\top\Sv_{W}\wv = 0,
\end{align}
where $\Sv_{W}$ and $\Sv_{B}$ are the $p \times p$ within-class and between-class scatter matrices of training dataset $\Xc$, respectively. \citet{Ahn2010} demonstrated that a classification rule using $\mdp$ as the normal vector to a discriminative hyperplane achieves better classification performance than classical linear classifiers when there are significantly correlated variables in high dimensions.

However, the maximal data piling direction has not been considered as an effective classifier since it depends too much on training data, resulting in poor generalization performances \citep{marron2007, Lee2013}. In general, while the training data are piled on $\mdp$, independent test data are not piled on $\mdp$. Recently, \citet{Chang2021} revealed the existence of the \textit{second data piling} direction, which gives a data piling of independent test data, under the HDLSS asymptotic regime.
In addition, they showed that a negatively ridged linear discriminant vector, projected onto a low-dimensional subspace, can be a \textit{second maximal data piling} direction, which yields a maximal asymptotic distance between two piles of independent test data. 

A second data piling direction is defined asymptotically as $p \to \infty$, unlike the first data piling of training dataset $\Xc$ for any fixed $p > n$. For a sequence of directions $\left\{\wv\right\} = (\wv^{(1)}, \ldots, \wv^{(p-1)}, \wv^{(p)}, \wv^{(p+1)}, \ldots)$, in which $\wv^{(q)} \in \Real^{q}$ for $q \in \mathbb{N}$, we write $\wv \in \Real^{p}$ for the $p$th element of $\left\{\wv \right\}$. Let $Y, Y'$ be independent random vectors from the same population of $\Xc$, and write $\pi(Y) = k$ if $Y$ belongs to class $k$. We formally define the second data piling below.

\begin{definition}[Second Data Piling]\label{def:SDP}
    The collection of all sequences of second data piling directions is defined as
    \begin{align*}
        \Ac := \left\{\left\{\wv \right\} \in \W_\Xc  : \text{ for any } Y, Y' \text{ with } \pi(Y) = \pi(Y'),~p^{-1/2}\wv^\top(Y - Y') \xrightarrow{P} 0 \text{ as } p \to \infty \right\}
    \end{align*}
    where $\W_\Xc = \left\{\left\{\wv\right\} : \wv \in \Sc_{\Xc}, \| \wv \|_2 = 1 \text{ for all } p \right\}$, and $\Sc_{\Xc} = {\rm span}(\Sv_W) \cup {\rm span}(\Sv_B)$ is the sample space. 
\end{definition}

There are infinitely many sequences of second data piling directions, as will be evident in later sections. Among those, analogously to $\mdp$ in (\ref{eq:MDP}), we can consider an \emph{optimal} direction which maximizes the asymptotic distance between the two piles of independent test data.

\begin{definition}[Second Maximal Data Piling]\label{def:SMDP}
    If $\left\{\vv \right\} \in \Ac$ satisfies 
    \begin{center}
        $\left\{ \vv \right\} \in \argmax\limits_{\left\{\wv\right\} \in \Ac }{D(\wv)}$,
    \end{center}
    where $D(\wv)$ is the probability limit of $p^{-1/2}|\wv^\top(Y_{1} - Y_{2})|$ for $\pi(Y_{k}) = k$ $(k = 1, 2)$, then we call $\{\vv\}$ a sequence of second maximal data piling directions. 
\end{definition}


\citet{Chang2021} showed that the second maximal data piling direction exists and by using it, asymptotic perfect classification of independent test data is possible. They assumed that the population mean difference is as large as $\|\muv_{(1)} - \muv_{(2)} \|_2 = O(p^{1/2})$ where $\muv_{(k)} \in \Real^p$ ($k =1, 2$) is the population mean vector of the $k$th class and both populations share a common, or \textit{homogeneous}, spiked covariance matrix. The spiked covariance model, first introduced by \citet{johnstone2001}, refers to high-dimensional population covariance matrix structures in which a few eigenvalues of the matrix are much larger than the other nearly constant eigenvalues \citep{ahn2007, Jung2009a, Shen2016}. More precisely, \citet{Chang2021} assumed the common covariance matrix $\Sigmav$ has $m$ spikes, that is, $m$ eigenvalues increase at the order of $p^\beta$ as $p \to \infty$ with some $\beta \in [0, 1]$, while the other eigenvalues are nearly constant, averaging to $\tau^2 > 0$. 

With such assumptions, \citet{Chang2021} showed that if $\Sigmav$ has weak spikes (that is, $0 \leq \beta < 1$), then projections of independent test data are asymptotically piled on two distinct points on $\mdp$ as $p \to \infty$, similar to the projections of the training data. In fact, \citet{Hall2005} also observed that second data piling occurs under this setting with classical classifiers such as support vector machine (SVM) \citep{Vapnik1995} or distance-weighted discrimination (DWD) \citep{marron2007}. However, if $\Sigmav$ has strong spikes (that is, $\beta = 1$), then projections of independent test data tend to be respectively distributed along two parallel affine subspaces in a low-dimensional subspace $\Sc = {\rm span}(\hat\uv_1, \ldots, \hat\uv_m, \mdp) \subset \Sc_{\Xc}$, where $\hat\uv_i$ is the $i$th eigenvector of $\Sv_W$. See Figure~\ref{fig:intro_figure} for an illustration. 
Furthermore, $\vv_{-\tau^2}$, a ridged linear discrimination vector with the choice of the negative ridge parameter $-\tau^2$, projected onto $\Sc$, is asymptotically orthogonal to these affine subspaces. 
As can be inspected from  Figure~\ref{fig:intro_figure}, independent test data are asymptotically piled onto $\vv_{-\tau^2}$, which is a second maximal data piling direction. 

\begin{figure}
    \centering
    \includegraphics[width=0.8\linewidth]{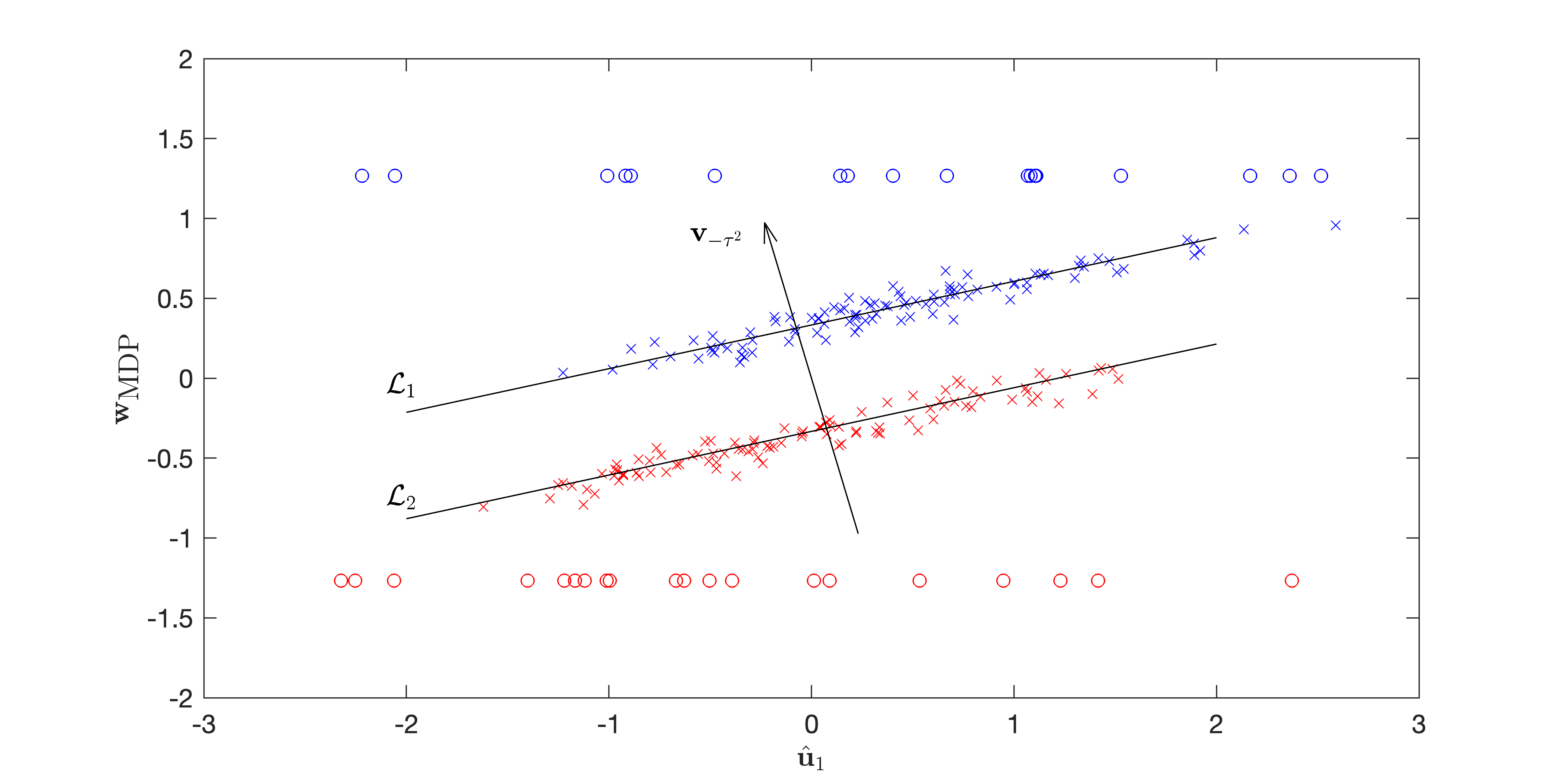}
    \caption{Double data piling phenomenon for homogeneous covariance model with one strong spike ($m = 1$). 
    Training data (class 1: blue circles, class 2: red circles) are piled on two distinct points on $\mdp$, while the projections of independent test data (class 1: blue crosses, class 2: red crosses) are distributed along parallel lines $\Lc_1$ and $\Lc_2$, respectively, when projected to $\Sc = {\rm span}(\hat\uv_1, \mdp)$. Both $\Lc_1$ and $\Lc_2$ are nearly orthogonal to $\vv_{-\tau^2}$. 
    }
    \label{fig:intro_figure}
\end{figure}

We note that, under the Gaussian assumption and homogeneous covariance model, the optimal separating hyperplane is normal to $\Sigmav^{-1}(\muv_{(1)} - \muv_{(2)})$. Under the heterogeneous covariance assumptions, however, the optimal decision boundary is no longer a plane but a quadric surface: 
$\xv^\top \Cv\xv + \bv^\top \xv + a = 0$, where $\Cv = \Sigmav_{(2)}^{-1} - \Sigmav_{(1)}^{-1}$, $\bv = 2(\Sigmav_{(1)}^{-1}\muv_{(1)} - \Sigmav_{(2)}^{-1}\muv_{(2)})$, $a = \muv_{(2)}^\top \Sigmav_{(2)}^{-1}\muv_{(2)} - \muv_{(1)}^\top \Sigmav_{(1)}^{-1}\muv_{(1)}$ and $\Sigmav_{(k)}$ is the covariance matrix of the $k$th class ($k = 1, 2$). However, the normal vector at the center of $\muv_{(1)}$ and $\muv_{(2)}$, $\xv_0 = \tfrac{1}{2}(\muv_{(2)} + \muv_{(1)})$, is simplified to 
\begin{align*}
    \nv = \Cv \xv_0 + \bv = -(\Sigmav_{(2)}^{-1} + \Sigmav_{(1)}^{-1}) (\muv_{(2)} - \muv_{(1)}),
\end{align*}
meaning that when the signal is strong enough, one can approximate the optimal decision boundary with a separating hyperplane, orthogonal to $\nv$. 

In this work, we develop a novel understanding of the second data piling direction, under generalized \textit{heterogeneous} spiked covariance models. Moreover, we show that a second maximal data piling direction can also be obtained purely from the training data, which can be seen as an estimate of the normal vector $\nv$. It turns out that the second maximal data piling direction is the direction closest to $\mdp$ in the nullspace of leading eigenspace. Based on this observation, we propose a data-splitting approach to obtain an estimate of the second maximal data piling direction, and confirm that the associated linear classification rule can achieve perfect classification. Furthermore, we show that the projected ridged linear discriminant vector---previously shown to achieve perfect classification in the homogeneous case---no longer becomes a second data piling direction in general for any ridge parameter.

We assume that for $k = 1, 2$, $\Sigmav_{(k)}$ has $m_{k}$ spikes such that $m_{k}$ eigenvalues increase at the order of $p^{\beta_{k}}$ as $p \to \infty$ while the other eigenvalues are nearly constant, averaging to $\tau_{k}^2 > 0$. We say that $\Sigmav_{(k)}$ has strong spikes if $\beta_{k} = 1$, or weak spikes if $0 \leq \beta_{k} < 1$. Also, we say that two covariance matrices have equal tail eigenvalues if $\tau_{1} = \tau_{2}$, or unequal tail eigenvalues if $\tau_{1} \ne \tau_{2}$.

\subsection{Main Contributions}
We provide a complete characterization of the second data piling phenomenon under generalized heterogeneous spiked covariance models, which includes a wide range of scenarios, covering covariance matrices with either strong spikes ($\beta_{k} = 1$) or weak spikes ($\beta_{k} < 1$), and either equal tail eigenvalues ($\tau_{1} = \tau_{2}$) or unequal tail eigenvalues ($\tau_{1} \ne \tau_{2}$). We show that projections of independent test data tend to be respectively distributed along two affine subspaces, denoted $\Lc_{1}$ and $\Lc_{2}$, in a low-dimensional subspace $\Sc \subset \Sc_{\Xc}$ (Theorem~\ref{thm:test data piling}). The `signal' subspace $\Sc$, spanned by some sample eigenvectors of $\Sv_W$ and $\mdp$, is obtained by removing the noisy directions from $\Sc_{\Xc}$, and is also characterized for each of the scenarios. 
Unlike the homogeneous case, the affine subspaces $\Lc_{1}$ and $\Lc_{2}$ need not be parallel to each other. 
However, we reveal that there are two parallel affine subspaces of greater dimension, containing each of these affine subspaces, in $\Sc$. Also, we find a somewhat counter-intuitive phenomenon that the eigenvectors of $\Sv_W$ corresponding to the largest eigenvalues do not always contribute to $\Sc$, but some other seemingly unimportant eigenvectors capture important variability. 

Based on the characterization of the second data piling phenomenon, we provide a unified view on the second \textit{maximal} data piling direction: The second maximal data piling direction can be obtained by projecting $\mdp$ onto the nullspace of the common leading eigenspace (Theorems~\ref{thm:SMDP piling distance} and~\ref{thm:SMDP characterization}). Building on this insight and a data-splitting approach, we propose Second Maximal Data Piling (SMDP) algorithms to estimate a second maximal data piling direction, and to compute discrimination rules based on the estimated directions (Algorithms~\ref{alg:1} and~\ref{alg:2}). The resulting classifiers achieve asymptotic perfect classification (Theorems~\ref{thm:data-splitting approach piling distance} and~\ref{thm:data-splitting approach SMDP}).

We further investigate exact conditions under which $\mdp$ or the projected ridged linear discriminant vector with a negative ridge parameter is a second maximal data piling direction under the heterogeneous covariance model (Section~\ref{sec:estimation ridge}). These directions were previously shown to have this property under the homogeneous covariance model with weak and strong spikes, respectively. It turns out that, while differences in the leading eigenspaces of the two classes may seem important, it is the difference in the tail eigenvalues (i.e., $\tau_1^2$ and $\tau_2^2$) that leads to the main difficulty in high-dimensional binary classification. For the case of weak spikes (i.e., $\beta_{1}, \beta_{2} < 1$), $\mdp$, which corresponds to the \textit{ridgeless} minimum-norm estimator in the context of linear regression setting, still yields second maximal data piling. We further show that a bias-corrected version of the original maximal data piling classification rule of \citet{Ahn2010} achieves asymptotic perfect classification for this case (Appendix~\ref{app:weak spikes} in the supplemental materials). However, for the case of strong spikes 
with equal tail eigenvalues (i.e., $\beta_{1} =\beta_{2} = 1$ and $\tau_{1} = \tau_{2}$), we show that the \textit{negatively} ridged linear discriminant vector, projected onto $\Sc$, can be a second maximal data piling direction even under heterogeneous covariance models. Moreover, the original projected ridge classification rule of \citet{Chang2021} achieves perfect classification in more general settings (Appendix~\ref{app:strong spikes with equal tails} in the supplemental materials).  However, for the case of strong spikes with unequal tail eigenvalues (i.e., $\beta_{1} = \beta_{2} = 1$ and $\tau_{1} \ne \tau_{2}$), the projected ridged linear discriminant vector may not be a second maximal data piling direction, or may not even yield second data piling for any ridge parameter (Section~\ref{sec:estimation ridge} and Appendix~\ref{app:strong spikes with unequal tails} in the supplemental materials). Table~\ref{table:summary} provides a summary of the linear classification rules discussed in this paper.

\begin{table}
\caption{Summary of the linear classification rules with respect to double data piling phenomenon discussed in this paper. See (\ref{eq:SMDP-I-rule}) and (\ref{eq:SMDP-II-rule}) for $\phi_{\textup{SMDP-I}}$ and $\phi_{\textup{SMDP-II}}$, respectively. The other classification rules are provided in Appendices~\ref{app:weak spikes}--\ref{app:strong spikes with unequal tails} in the supplemental materials. Classification rules that can achieve asymptotic perfect classification under the corresponding setting are marked by $\checkmark$. Classification rules that can achieve asymptotic perfect classification under the corresponding setting only with further specific conditions are marked by $\triangle$. 
}
\centering
\begin{tabular}{cccccccc}
\noalign{\smallskip}\noalign{\smallskip}
\multirow{2}{*}{Setting}  & \multicolumn{2}{c}{$0 \leq \beta_1, \beta_2 < 1$}  & \multicolumn{2}{c}{$\beta_1 = \beta_2 = 1$}  & \multicolumn{2}{c}{$0 \leq \beta_2 < \beta_1 = 1$} & \multirow{2}{*}{Reference}\\
\cline{2-7}
 & $\tau_1 = \tau_2$ & $\tau_1 \ne \tau_2$ & $\tau_1 = \tau_2$ & $\tau_1 \ne \tau_2$ & $\tau_1 = \tau_2$ & $\tau_1 \ne \tau_2$ &\\
\hline
$\phi_{\textup{SMDP-I}}$ & $\checkmark$ & $\checkmark$ & $\checkmark$ & $\checkmark$ & $\checkmark$ & $\checkmark$ & \multirow{2}{*}{Section~\ref{sec:estimation of SMDP}} \\
\cline{1-7}
$\phi_{\textup{SMDP-II}}$ & $\checkmark$ & $\checkmark$ & $\checkmark$ & $\checkmark$ & $\checkmark$ & $\checkmark$ & \\
\hline
$\phi_{\textup{MDP}}$ & $\checkmark$ & &  & & & & \multirow{2}{*}{Appendix~\ref{app:weak spikes}} \\
 \cline{1-7}
$\phi_{\textup{b-MDP}}$ & $\checkmark$ & $\checkmark$ &  & & & & \\
 \hline
$\phi_{\textup{PRD},\alpha}$ & $\checkmark$ & & $\checkmark$ & & $\checkmark$ & & Appendix~\ref{app:strong spikes with equal tails} \\
 \hline
$\phi_{\textup{b-PRD},\alpha}$ & $\checkmark$ & $\checkmark$ & $\checkmark$ & $\triangle$ & $\checkmark$ & $\checkmark$ & Appendix~\ref{app:strong spikes with unequal tails} \\
\end{tabular}
\label{table:summary}
\end{table}


        


                

\subsection{Related Works}
There has been relatively scarce work on binary classification problems with heterogeneous covariances, 
which are potentially more applicable to real-world data.
\citet{aoshima2019} proposed a distance-based classifier, while \citet{ishii2022} proposed geometrical quadratic discriminant analysis for this problem. Both assume not only the dimension of data $p$ but also training sample sizes of each class $n_{1}$ and $n_{2}$ tend to infinity to achieve perfect classification. \citet{ishii2020} proposed another distance-based classifier which achieves perfect classification even when $n_1$ and $n_2$ are fixed, but limited to the one-component covariance model (with $m_{1} = m_{2} = 1$). All of these works are based on a data transformation technique, which essentially projecting the independent test data onto the nullspace of the leading eigenspace. Our results are also based on a similar idea of removing the leading eigenspace, but 
we reveal the relationship between the maximal data piling direction of training data and the second maximal data piling direction of independent test data. Recently, it has been shown that double data piling phenomenon also occurs in multi-category classification problems for homogeneous covariance models \citep{kim2024double}. In this work, we focus on binary classification problems for generalized heterogeneous covariance models.

The benign overfitting phenomenon has been extensively studied in linear regression models \citep{Bartlett2020, Holzmuller2020, Hastie2022}. \citet{Kobak2020} showed that the optimal ridge parameter can be zero or negative when a one-component covariance model is assumed in the overparameterized regime, and \citet{Tsigler2020} further showed that negative regularization can achieve smaller generalization error than nearly zero regularization under specific spiked covariance models. \citet{Wu2020} also provided general conditions for which the optimal ridge parameter is negative in the overparameterized regime. Our findings are consistent with the above results concerning linear regression, and provide conditions when negative regularization is needed in the context of linear classification. 

\subsection{Organization}
The rest of this paper is organized as follows. In Section~\ref{sec:model}, we define the generalized heterogeneous spiked covariance models. In Section~\ref{sec:test data piling}, we characterize the second data piling phenomenon under the heterogeneous covariance models for $\beta_{1} = \beta_{2} = 1$ (Discussions for $\beta_{1} < 1$ or $\beta_{2} < 1$ are given in the supplemental materials). In Section~\ref{sec:estimation of SMDP}, 
we propose SMDP algorithms to estimate an SMDP direction. In Section~\ref{sec:estimation ridge}, we reveal exact conditions under which the projected ridged linear discriminant vector with a negative ridge parameter yields second data piling. In Section~\ref{sec:numerical studies}, we empirically evaluate the performance of the SMDP algorithms and classification rules based on the projected ridged linear discriminant vector through simulation studies and a real data example. Technical details, case-by-case discussions of the second data piling phenomenon, and proofs of the main theorems are provided in the supplemental materials.

\section{Heterogeneous Covariance Models}\label{sec:model}
We assume that for $k = 1, 2$, $X|\pi(X)=k$ follows an absolutely continuous distribution on $\Real^{p}$ with mean $\muv_{(k)}$ and covariance matrix $\Sigmav_{(k)}$. Also, we assume $\mathbb{P}(\pi(X) = k) = \pi_{k},$ where $\pi_{k} > 0 $ and $\pi_{1} + \pi_{2} = 1$. Write the eigen-decomposition of $\Sigmav_{(k)}$ by $\Sigmav_{(k)}=\Uv_{(k)}\Lambdav_{(k)}\Uv_{(k)}^\top$, where $\Lambdav_{(k)}=\diag{(\lambda_{(k),1},\ldots,\lambda_{(k),p})}$ in which the eigenvalues are arranged in descending order, and $\Uv_{(k)}=[\uv_{(k),1},\ldots,\uv_{(k),p}]$ for $k = 1, 2$. We make the following assumptions for generalized heterogeneous spiked covariance models.

\begin{Assumption}\label{assume:1}
For the population mean difference vector $\muv = \muv_{(1)} - \muv_{(2)}$, there exists $\delta > 0$ such that $p^{-1/2}\norm{\muv} \to \delta$ as $p\to\infty$.
\end{Assumption}

\begin{Assumption}\label{assume:2}
For a fixed integer $m_{k} \ge 0$, $\sigma_{k,i}, \tau_{k,i} > 0$ and $\beta_{k} \in [0, 1]$ $(k = 1, 2)$, eigenvalues $\lambda_{(k),i} = \sigma_{k, i}^2 p^{\beta_{k}} + \tau_{k,i}^2$ for $1 \leq i \leq m_{k}$ and $\lambda_{(k),i} = \tau_{k,i}^2$ for $m_{k}+1 \leq i \leq p$. Also, $\{\tau_{k,i}^2 : k = 1, 2,~i = 1, 2, \ldots \}$ is uniformly bounded and $p^{-1}\sum_{i=1}^{p} \tau_{k,i}^2 \rightarrow \tau_{k}^2$ as $p \to \infty$ for some $\tau_{k}^2 > 0$. Without loss of generality, we assume $\tau_1 \ge \tau_2$.
\end{Assumption}

Assumption~\ref{assume:1} ensures that nearly all variables are meaningfully contributing to discrimination \citep{Hall2005, Qiao2009, jung2018}. Assumption~\ref{assume:2} allows heterogeneous covariance matrices for different classes, including the homogeneous case, that is, $\Sigmav_{(1)} = \Sigmav_{(2)}$. We assume for $k = 1, 2$, $\Sigmav_{(k)}$ has $m_{k}$ spikes, that is, $m_{k}$ eigenvalues increase at the order of $p^{\beta_{k}}$ as $p \to \infty$ while the other eigenvalues are nearly constant as $\tau_{k}^2$. 
We call the first $m_{k}$ eigenvalues and their corresponding eigenvectors \emph{leading} eigenvalues and eigenvectors of the $k$th class for $k = 1, 2$. Note that if $\beta_{k} > 1$, then $\Sigmav_{(k)}$ has extremely strong signals within the leading eigenspace, and thus the classification problem becomes trivial \citep{Jung2009a}. Hence, we pay attention to the cases of $\beta_{1}, \beta_{2} \in [0, 1]$. \citet{aoshima2019} found that real high-dimensional data often follow the strongly spiked eigenvalue model in which $\beta_k \ge 1/2$. In particular, we assume $\beta_{1} = \beta_{2} = 1$ throughout the main article, which is the most interesting and challenging setting. 
The other cases where $\beta_{1} < 1$ or $\beta_{2} < 1$ are addressed in the supplemental materials. 

\begin{remark}\label{rmk:asymptotic regime}
    Assumption~\ref{assume:2} may be relaxed so that the first $m_{k}$ eigenvalues have different orders of magnitude, for example, for some $N_{k} > 0$, we may assume $\lambda_{(k),i} = \sigma_{k, i}^2 p^{\beta_{k,i}} + \tau_{k,i}^2$ for $1 \ge \beta_{k,1} \ge \ldots \ge \beta_{k,N_{k}} \ge 0$. However, asymptotic results under this relaxed model are equivalent with the results under Assumption~\ref{assume:2} with $m_{k} = \sum_{i=1}^{N_k} \1v(\beta_{k,i} = 1)$ under the HDLSS asymptotic regime \citep[see also][Remark~2.1]{Chang2021}. 
\end{remark}

We write a $p \times m_k$ orthogonal matrix of leading eigenvectors of each class as $\Uv_{(k),1} = [\uv_{(k),1}, \ldots, \uv_{(k), m_{k}}]$ for $k = 1, 2$. 
Let $\Uc$ be the subspace spanned by the leading eigenvectors from both classes, that is,
\begin{equation}
    \begin{aligned}
        \Uc := {\rm span}([\Uv_{(1),1}, \Uv_{(2),1}]).
    \end{aligned}
\end{equation}
We call $\Uc$ the common leading eigenspace of both classes. We assume that the dimension of $\Uc$, 
\begin{align*}
    m := \dim{(\Uc)},
\end{align*}
is a fixed constant for all $p$. 
Note that $\max{(m_{1}, m_{2})} \leq m \leq m_{1}+m_{2}$. In particular, if $m_{1} = m_{2} = 0$, then $\Uc = \0v_p$ and $m = 0$. As indicated in Remark~\ref{rmk:asymptotic regime}, asymptotic results with $m_{k} = 0$ and $\beta_k = 1$ ($k = 1, 2$) are equivalent to those under the weak spikes model ($\beta_{k} < 1$). 

We assume that training dataset $\Xc$ consists of $X_{k1}, \ldots, X_{kn_{k}}$, which are independent and identically distributed random variables where $\pi(X_{kj}) = k$ for $k = 1, 2$ and $1 \leq j \leq n_k$. Let $\Xv_{k} = [X_{k1},\ldots,X_{kn_k}]$ for $k = 1, 2$. We assume $n_{1}$ and $n_{2}$ are fixed and denote $\eta_k = n_k/n$ for $k = 1, 2$ where $n = n_1+n_2$. We write class-wise sample mean vectors $\bar{X}_k = {n_k}^{-1}\sum_{j=1}^{n_k} X_{kj}$ and the total sample mean vector $\bar{X} = \eta_1 \bar{X}_1 + \eta_2 \bar{X}_2$. Also, the within-class scatter matrix is $\Sv_W = \Sv_1 + \Sv_2$, where, for $k = 1, 2$, $\Sv_k = (\Xv_k - \bar{\Xv}_k)(\Xv_k - \bar{\Xv}_k)^\top $ and $\bar{\Xv}_k = \bar{X}_k \1v_{n_k}^\top$. We write an eigen-decomposition of $\Sv_W$ by $\Sv_W = \hat\Uv\hat\Lambdav\hat\Uv^\top$, where $\hat\Lambdav=\diag{(\hat\lambda_{1},\ldots,\hat\lambda_{p})}$ in which the eigenvalues are arranged in descending order, and $\hat\Uv=[\hat\uv_1, \ldots, \hat\uv_{p}]$. Since $\hat\lambda_1 \ge \cdots \ge \hat\lambda_{n-2} \ge \hat\lambda_{n-1} = \cdots = \hat\lambda_{p} = 0$ with probability $1$, we can write $\Sv_W = \hat\Uv_1\hat\Lambdav_{1}\hat\Uv_1^\top$ where $\hat\Uv_1 = [\hat\uv_1, \ldots, \hat\uv_{n-2}]$ and $\hat\Lambdav_{1} = \diag{(\hat\lambda_1, \ldots, \hat\lambda_{n-2})}$. Also, we write $\hat\Uv_2 = [\hat\uv_{n-1}, \ldots, \hat\uv_{p}]$. We denote the sample space as $\Sc_{\Xc}$, which is the $(n-1)$-dimensional subspace spanned by $X_{kj} - \bar{X}$ for $k = 1, 2$ and $1 \leq j \leq n_k$. Note that the sample space $\Sc_{\Xc}$ can be equivalently expressed as ${\rm span}(\hat\uv_1, \ldots, \hat\uv_{n-2}, \mdp)$ \citep{Ahn2010, Chang2021}. We denote the sample mean difference vector as $\dv = \bar{X}_1 - \bar{X}_2$. 

We regulate the dependency of the true principal components $\zv_{kj} = \Lambdav_{(k)}^{-1/2}\Uv_{(k)}^\top(X_{kj} - \muv_{(k)}) \in \Real^{p}$ ($k = 1, 2$, $1 \leq j \leq n_k$) by utilizing the $\rho$-mixing condition. This allows us to make use of the law of large numbers applied to $p \to \infty$ as done in \citet{Hall2005} and \citet{Jung2009a}. We refer the reader to \citet{Kolmogorov1960} and \citet{Bradley2005} for a detailed explanation of the $\rho$-mixing condition. 

\begin{Assumption}\label{assume:3}
The elements of the $p$-vector $\zv_{kj} $ have uniformly bounded fourth moments, and for each $p$, $\zv_{kj}$ consists of the first $p$ elements of an infinite random sequence $$(z_{k,1},z_{k,2},...)_{j},$$ which is $\rho$-mixing under some permutation.
\end{Assumption}

We define ${\rm Angle}(\wv_1, \wv_2) := \arccos\{\wv_1^\top \wv_2 / \left(\|\wv_1\|_2 \|\wv_2 \|_2\right)\}$ for $\wv_1, \wv_2 \in \Real^p  \setminus \left\{ \0v_p \right\}$. For $\wv \in \Real^p  \setminus \left\{ \0v_p \right\}$ and a subspace $\Vc$ of $\Real^p$, let $P_{\Vc}\wv$ be the orthogonal projection of $\wv$ onto $\Vc$ and define ${\rm Angle}(\wv, \Vc) : = \arccos\{\wv^\top P_{\Vc}\wv / \left(\|\wv\|_2 \|P_{\Vc}\wv \|_2\right)\}$. We also assume that limiting angles between leading eigenvectors and the population mean difference vector $\muv$ exist as $p \to \infty$. Related technical assumptions are deferred to Appendix~\ref{app:assumptions} in the supplemental materials (see Assumptions~\ref{assume:4} and~\ref{assume:5} therein). Let 
\begin{align*}
    \varphi := \lim_{p \to \infty} {\rm Angle}(\muv, \Uc) \in (0, \pi/2]
\end{align*}
denote the limiting angle between $\muv$ and $\Uc$. Throughout, we assume that $\varphi \ne 0$, that is, we do not consider the case where $\muv$ lies within the common leading eigenspace $\Uc$. Lastly, we use the convention that if the dimension $m = 0$, then $\varphi = \pi/2$.

\section{Data Piling of Independent Test Data}\label{sec:test data piling}
The \emph{first data piling} refers to the phenomenon where projections of training data onto a direction vector $\wv \in \Real^p$ are piled on two distinct points in binary classification \citep{Ahn2010}. The first data piling occurs whenever $p > n-2$. \citet{Ahn2010} revealed the existence of the \emph{first maximal data piling} direction, $\mdp$ in (\ref{eq:MDP}), which uniquely maximizes the distance between two piles of training data among directions exhibiting first data piling. While $\mdp$ yields the first data piling, it does not generally yield the \emph{second data piling} of independent test data (see Figure~\ref{fig:intro_figure}).

In this section, we show that independent test data, projected onto a low-dimensional signal subspace $\Sc$ within the sample space $\Sc_{\Xc}$, tend to be respectively distributed along two affine subspaces as $p$ increases. It should be emphasized that the second data piling phenomenon occurs asymptotically as $p \to \infty$, while the first data piling phenomenon occurs for any fixed $p > n-2$. For the homogeneous case ($\Sigmav_{(1)} = \Sigmav_{(2)}$), \citet{Chang2021} showed that there are two affine subspaces, each with dimension $m = m_1 = m_2$, such that they are parallel to each other. We show that if $\Sigmav_{(1)} \ne \Sigmav_{(2)}$, these affine subspaces are not in general parallel to each other, but there exist parallel affine subspaces, of greater dimension, containing each of these affine subspaces. 

In Section~\ref{sec:test data piling examples}, we illustrate this phenomenon under different conditions on the tail eigenvalues of the covariance matrices. In Section~\ref{sec:test data piling signal subspace}, we formally characterize the signal subspace $\Sc$, which captures important variability of independent test data, for each scenario of covariance matrices. In Section~\ref{sec:test data piling main Theorem}, we provide the main theorem (Theorem~\ref{thm:test data piling}) that characterizes the second data piling phenomenon of independent test data under generalized heterogeneous spiked covariance models. Throughout, let $\Yc_k$ be an independent test data of the $k$th class whose element $Y \in \Yc_k$ satisfies $\pi(Y) = k$ for $k = 1, 2$ and is independent to training data $\Xc$. Write $\Yc = \Yc_1 \cup \Yc_2$.

\subsection{Illustration of Second Data Piling}\label{sec:test data piling examples}
In this subsection, we illustrate the phenomenon of data pile of independent test data. 
As a simple example, consider the one-component covariance model (i.e., $m_1 = m_2 = 1$) as follows:
\begin{equation}\label{eq:one-comp model}
    \begin{aligned}
    &\Sigmav_{(1)} = \sigma_{1,1}^2 p \uv_{(1),1}\uv_{(1),1}^\top + \tau_1^2 \Iv_{p}; \\
    &\Sigmav_{(2)} = \sigma_{2,1}^2 p \uv_{(2),1}\uv_{(2),1}^\top + \tau_2^2 \Iv_{p}.
    \end{aligned}
\end{equation}
Note that, if $\sigma_{1,1} = \sigma_{2,1}$, $\uv_{(1),1} = \uv_{(2),1}$ and $\tau_1 = \tau_2$, then this model corresponds to the homogeneous covariance model of Figure~\ref{fig:intro_figure}. 

We will see that how differences in the magnitude of signals ($\sigma_{1,1}^2$ and $\sigma_{2,1}^2$), the leading eigenvectors ($\uv_{(1),1}$ and $\uv_{(2),1}$) and the tail eigenvalues ($\tau_1^2$ and $\tau_2^2$) affect the second data piling phenomenon. Among these, the magnitude of signals makes the least distinction between the homogeneous and heterogeneous cases with respect to the second data piling phenomenon. For example, suppose that $\uv_{(1),1} = \uv_{(2),1}$ and $\tau_1 = \tau_2$ but $\sigma_{1,1} \ne \sigma_{2,1}$. Then, similarly in Figure~\ref{fig:intro_figure} where $\Sigmav_{(1)} = \Sigmav_{(2)}$, it can be checked that projections of independent test data $\Yc$ onto 
$\Sc = {\rm span}(\hat\uv_1, \mdp)$ 
tend to be distributed along two parallel lines $\Lc_1$ and $\Lc_2$, while those of training data $\Xc$ are piled on two distinct points along $\mdp$. The only difference is that the spread of the independent test data $\Yc_k$ along $\Lc_k$ depends on the value of $\sigma_{k,1}$ for $k = 1, 2$. 

Hence, we pay close attention to the impact of differences in the leading eigenvectors and the tail eigenvalues on the second data piling phenomenon. 
In particular, we examine the case of heterogeneous leading eigenvector ($\uv_{(1),1} \ne \uv_{(2),1}$) under the model (\ref{eq:one-comp model}), while setting $\tau_1 = \tau_2$ in Example~\ref{ex:strong_eq_2} and $\tau_1 \ne \tau_2$ in Example~\ref{ex:strong_ne_2}. 
We will see that these examples are quite different from the homogeneous case in Figure~\ref{fig:intro_figure} and also from one another. 


\begin{example}[$\uv_{(1),1} \ne \uv_{(2),1}$ and $\tau_{1} = \tau_2$]\label{ex:strong_eq_2}

\begin{figure}
    \centering
    \includegraphics[width=1\linewidth]{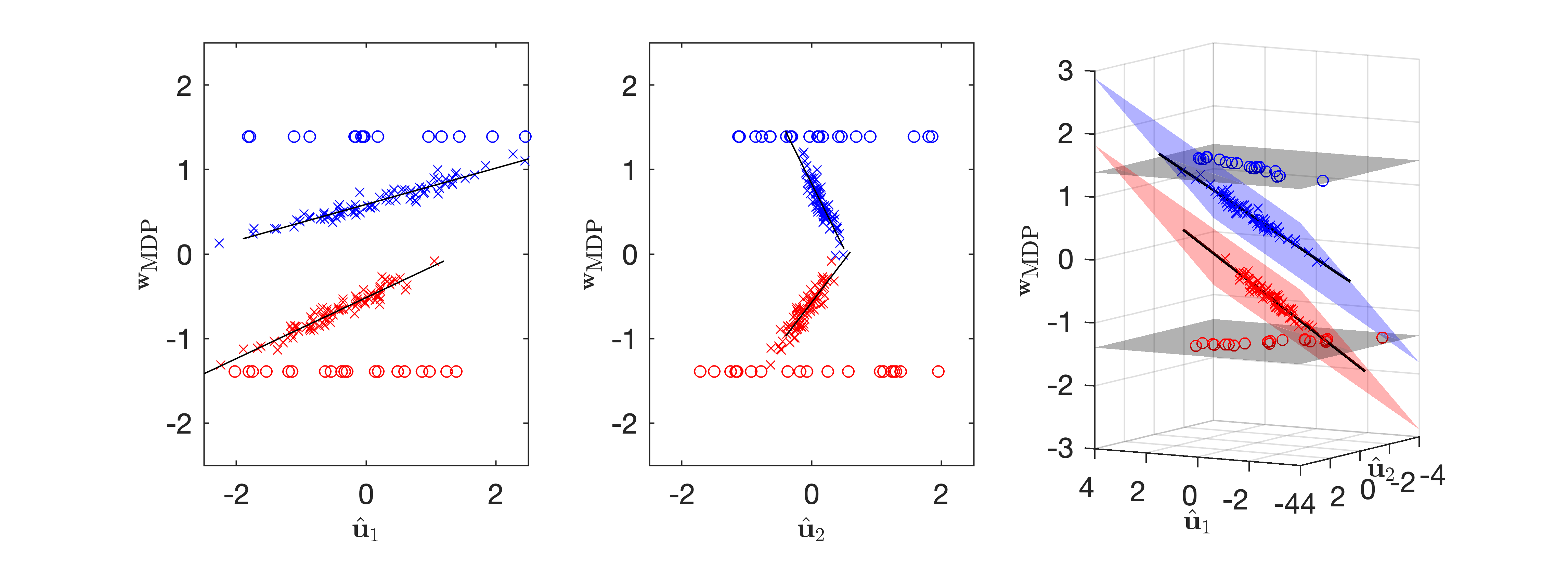}
    \caption{
    Projections of training data $\Xc$ and independent test data $\Yc$ under the model in Example~\ref{ex:strong_eq_2}, onto  $\Sc_1 = {\rm span}(\hat\uv_1, \mdp)$, $\Sc_2 = {\rm span}(\hat\uv_2, \mdp)$ and $\Sc = {\rm span}(\hat\uv_1, \hat\uv_2, \mdp)$. Second data piling occurs along the direction orthogonal to $\Lc_1$ and $\Lc_2$ (blue and red planes). 
    }
    \label{fig:strong_eq_2}
\end{figure}

In this case, the angle between the sample eigenvector $\hat\uv_i$ and the common leading eigenspace $\Uc = {\rm span}(\uv_{(1),1}, \uv_{(2),1})$ converges to a random quantity between $0$ and $\pi/2$ for $i = 1, 2$. In contrast, the other sample eigenvectors are strongly inconsistent with $\Uc$ in the sense that ${\rm Angle}(\hat\uv_i, \Uc) \xrightarrow{P} \pi / 2$ as $p \to \infty$ for $3 \leq i \leq n-2$ (Detailed explanations are given in Section~\ref{sec:test data piling signal subspace} and Appendix~\ref{app:asymptotic properties of sample covariance matrix} of the supplemental materials).
In Figure~\ref{fig:strong_eq_2}, independent test data $\Yc$ projected onto $\Sc_1 = {\rm span}(\hat\uv_1, \mdp)$ and $\Sc_2 = {\rm span}(\hat\uv_2, \mdp)$ are also concentrated along lines, but in both subspaces these lines are not parallel to each other (unlike in the homogeneous case in Figure~\ref{fig:intro_figure}). However, within the $3$-dimensional subspace 
$\Sc = {\rm span}(\hat\uv_1, \hat\uv_2, \mdp)$, 
there are two parallel $2$-dimensional planes $(\Lc_1, \Lc_2)$ that include these lines, one for each line. In fact, $\Yc_1$ is distributed along the direction $P_{\Sc}\uv_{(1),1}$, while $\Yc_2$ is distributed along the direction $P_{\Sc}\uv_{(2),1}$. Thus, these lines are asymptotically contained in $2$-dimensional affine subspaces that are parallel to $P_{\Sc}\Uc := {\rm span}(P_{\Sc}\uv_{(1),1},  P_{\Sc}\uv_{(2),1})$.
\end{example}

In the next example in which $\tau_1 \ne \tau_2$, we observe that asymptotic properties of eigenvectors of $\Sv_W$ are different from the case of $\tau_1 = \tau_2$, and that unequal tail eigenvalues affects the behavior of data piling of independent test data.

\begin{example}[$\uv_{(1),1} \ne \uv_{(2),1}$ and $\tau_{1} \ne \tau_2$]\label{ex:strong_ne_2}

\begin{figure}[tp]
    \centering
    \includegraphics[width=0.8\linewidth]{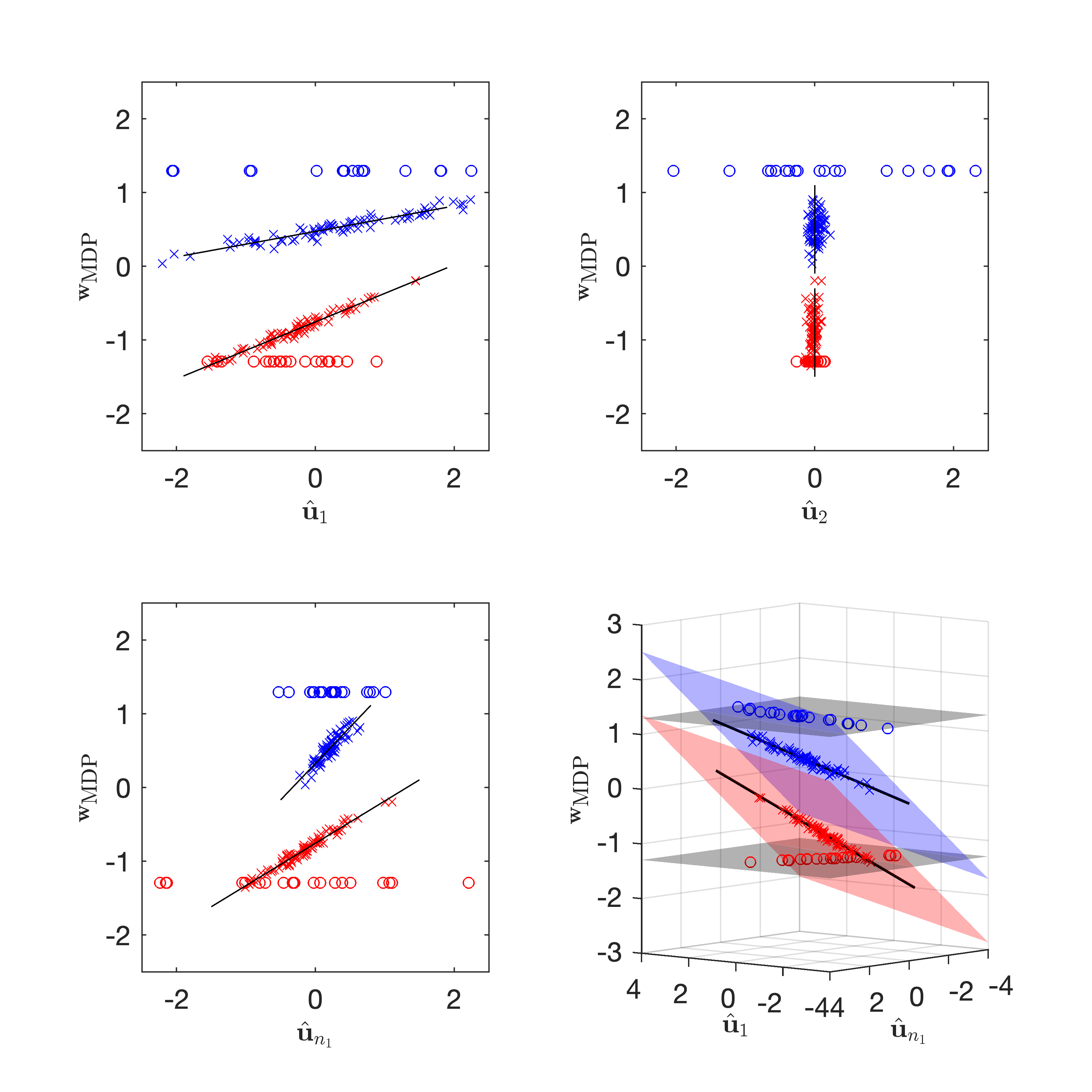}
    \caption{
    Projections of training data $\Xc$ and independent test data $\Yc$ under the model in Example~\ref{ex:strong_ne_2}, onto  $\Sc_1 = {\rm span}(\hat\uv_1, \mdp)$, $\Sc_2 = {\rm span}(\hat\uv_2, \mdp)$, 
    $\Sc_{n_1} = {\rm span}(\hat\uv_{n_1}, \mdp)$ and $\Sc_{1,n_1} = {\rm span}(\hat\uv_1, \hat\uv_{n_1}, \mdp)$. Second data piling occurs along the direction orthogonal to $\Lc_1$ and $\Lc_2$ (blue and red planes) in $\Sc_{1,n_1}$.
    %
    %
    }
    \label{fig:strong_ne_2}
\end{figure}
Without loss of generality, assume $\tau_1 > \tau_2$.
Similarly to Example~\ref{ex:strong_eq_2}, $\hat\uv_1$ captures the largest variation within the common leading eigenspace $\Uc$ from the data. 
In the equal tail case (Example~\ref{ex:strong_eq_2}), $\hat\uv_2$ always captures the remaining variation in $\Uc$. However, for the unequal tail case, the eigenvector that captures the remaining variation could be $\hat\uv_{n_1}$ instead of $\hat\uv_2$. 
This situation---$\Sc={\rm span}(\hat\uv_1, \hat\uv_{n_1}, \mdp)$ (rather than ${\rm span}(\hat\uv_1, \hat\uv_{2}, \mdp)$) captures the significant variations of independent test data $\Yc$---is displayed in Figure~\ref{fig:strong_ne_2}. 

Perhaps surprisingly, which eigenvector, $\hat\uv_2$ or $\hat\uv_{n_1}$, captures the variation is decided at random. 
%
%
%
%
To understand this phenomenon, recall the geometric representation of HDLSS data: \citet{Jung2012a} showed that
HDLSS data from a strongly spiked covariance model (that is, $\beta_k = 1$) can asymptotically be decomposed into random and deterministic parts; the random variation remains in ${\rm span}(\uv_{(k),1})$, while the deterministic simplex structure with edge length $\tau_k \sqrt{p}$ remains in the orthogonal complement of ${\rm span}(\uv_{(k),1})$. For sufficiently large $p$, $\hat\uv_1$ explains the most important variation within $\Uc = {\rm span}(\uv_{(1),1}, \uv_{(2),1})$ in the data from both classes. If the remaining variation within $\Uc$ in the data from both classes is larger than $\tau_1^2p$, then this variation is captured by $\hat\uv_{2}$, while $\hat\uv_{3}, \ldots, \hat\uv_{n_1}$ (and $\hat\uv_{n_1+1}, \ldots, \hat\uv_{n-2}$) explain the deterministic simplex of data with edge length $\tau_1\sqrt{p}$ (and $\tau_2\sqrt{p}$) from the first (and second) class, respectively. In contrast, if the remaining variation is smaller than $\tau_1^2p$, then the roles of $\hat\uv_{2}$ and $\hat\uv_{n_1}$ are reversed, thus $\hat\uv_{n_1}$ explains the remaining variation. In Section~\ref{sec:test data piling signal subspace} and Appendix~\ref{app:asymptotic properties of sample covariance matrix} of the supplemental materials, we will see that this variation may be either larger or smaller than $\tau_1^2p$ depending on the true leading principal components scores of training data $\Xc$. 
\end{example}

\begin{remark}
While we have presented illustrations only for the case $\uv_{(1),1} \ne \uv_{(2),1}$ in this subsection for brevity and clarity, the case $\uv_{(1),1} = \uv_{(2),1}$ but $\tau_1 \ne \tau_2$ is also illustrated in Figure~\ref{fig:one_comp_m=1} in Section~\ref{sec:SMDP theory}, which is used in providing insight into understanding the theoretical second maximal data piling direction. 
\end{remark}

\subsection{Characterization of the Signal Subspace $\Sc$}\label{sec:test data piling signal subspace}
In Section~\ref{sec:test data piling examples}, we have observed that there exists a low-dimensional `signal' subspace $\Sc$ such that projections of independent test data $\Yc$ onto $\Sc$ tend to lie on parallel affine subspaces, one for each class. The subspace $\Sc$ depends on the situations, and was ${\rm span}(\hat\uv_1, \mdp)$ in Figure~\ref{fig:intro_figure}, ${\rm span}(\hat\uv_1, \hat\uv_2, \mdp)$ in Example~\ref{ex:strong_eq_2}, and either ${\rm span}(\hat\uv_1, \hat\uv_2, \mdp)$ or ${\rm span}(\hat\uv_1, \hat\uv_{n_1},\mdp)$ in Example~\ref{ex:strong_ne_2}. 

We now formally characterize the signal subspace $\Sc$ for general cases. For this, we will utilize the index set $\Dc \subset \left\{1, \ldots, n-2\right\}$ collecting the indices of the sample eigenvectors of $\Sv_W$ that explain variations contained in the common leading eigenspace $\Uc$. In the examples mentioned above, $\Dc = \{1\}$, $\{1,2\}$, and either $\{1,2\}$ or $\{1,n_1\}$, respectively.


\begin{table}
\caption{The index set $\Dc$ for the strong spike case ($\beta_1 = \beta_2 = 1$). The random number $k_0$ depends on the true principal component scores of training data $\Xc$, and is defined in Lemma~\ref{lem:asymptotic property of Sw unequal tails} in Appendix~\ref{app:asymptotic properties of sample covariance matrix} of the supplemental materials. See Appendix~\ref{app:strong and weak spikes} in the supplemental materials for cases with weak spikes.}
\centering
\begin{tabular}{c|cc}
\noalign{\smallskip}\noalign{\smallskip}
$\tau_1, \tau_2$ & {$\Dc$} & {$|\Dc|$}\\
\hline
 {$\tau_1 = \tau_2$} & {$\left\{1, \ldots, m \right\}$} & {$m$} \\ \hline
  {$\tau_1 > \tau_2$} & {$\left\{1, \ldots, k_0, k_0+(n_1-m_1), \ldots, n_1+m_2-1  \right\}$} & {$m_1+m_2$} \\
\end{tabular}
%
\label{table:D}
\end{table}

As demonstrated in Section~\ref{sec:test data piling examples} for single-spike scenarios, the asymptotic behavior of the sample eigenvectors of $\Sv_W$ varies significantly depending on whether the tail eigenvalues are equal or differ. In the more general case with $m_1$ and $m_2$ spikes, these differences in asymptotic behavior lead to distinct characterizations of $\Dc$, which are summarized in Table~\ref{table:D}.
%
Detailed derivations of this result are given in Appendix~\ref{app:asymptotic properties of sample covariance matrix} of the supplemental materials. Here, we provide a summary of some important observations for strong spike models. 
\begin{itemize}
    \item If $\tau_1 = \tau_2$, then only the first $m$ leading eigenvectors of $\Sv_W$ can explain the variation within the common leading eigenspace $\Uc$ (see Example~\ref{ex:strong_eq_2} for the single spike case). 

    \item If $\tau_1 > \tau_2$, then there are exactly $m_1+m_2$ sample eigenvectors of $\Sv_W$ that explain the variation within the common leading eigenspace $\Uc$, regardless of the value of $m$. While the first $m_1$ sample eigenvectors are always capable of explaining this variation, some non-leading eigenvectors (after the $n_1$th) may capture variability within $\Uc$ that could otherwise be explained by other leading eigenvectors beyond the $m_1$th. That is, $k_0$ is a random number between $m_1$ and $m_1+m_2$. In particular, if $m = m_1$, then $k_0$ in Table~\ref{table:D} is $m_1$ with probability $1$, that is, $\Dc$ always consists of the first $m_1$ leading eigenvectors and $m_2$ non-leading eigenvectors 
    (e.g., see Figure~\ref{fig:one_comp_m=1} in Section~\ref{sec:SMDP theory} for the single spike case). In general, $k_0$ is a random number depending on true leading principal component scores of $\Xc$, which is difficult to estimate in practice.
    Under the single spike model in Example~\ref{ex:strong_ne_2}, $\mathbb{P}(k_0 = i)$ $(i = 1, 2)$ depends on the magnitude of signals $\sigma_{1,1}^2$ and $\sigma_{2,1}^2$, the tail eigenvalues $\tau_1^2$ and $\tau_2^2$, and the angle between $\uv_{(1),1}$ and $\uv_{(2),1}$. See Figure~\ref{fig:k0plot} for an illustration. In particular, when $\uv_{(1),1} = \uv_{(2),1}$, then $k_0=1$ and $\Dc = \{1, n_1 \}$ almost surely.

    \begin{figure}
        \centering
        \includegraphics[width=1\linewidth]{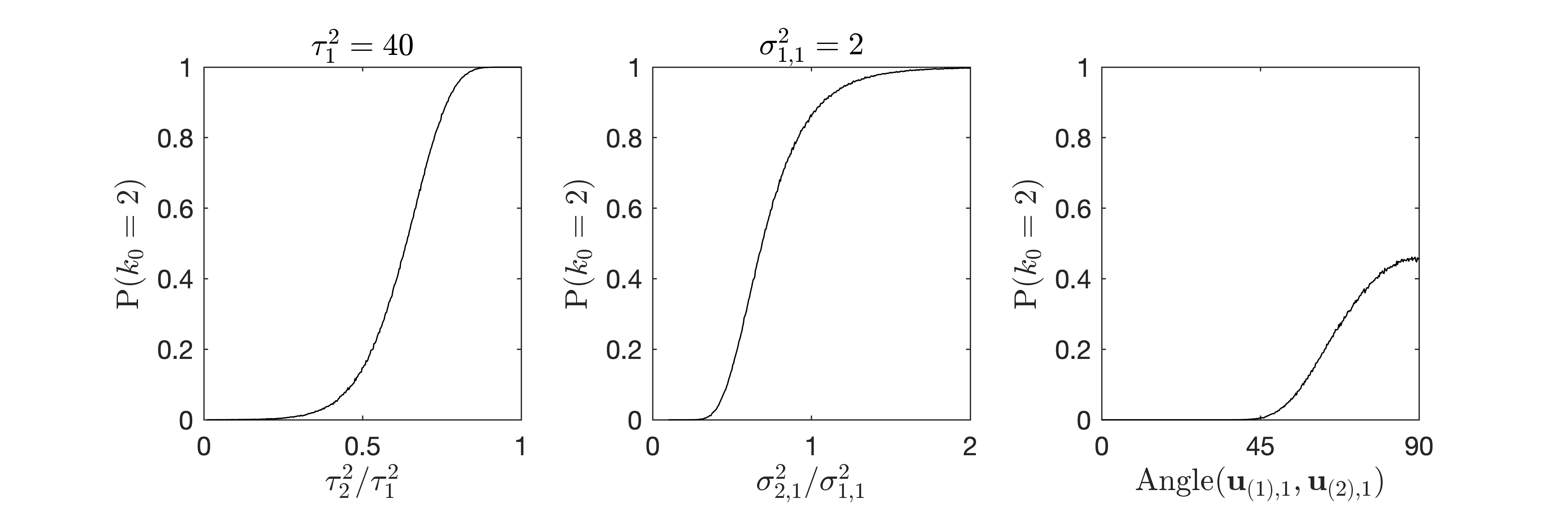}
        \caption{Average estimates of $\mathbb{P}(k_0 = 2)$, the probability that $\hat\uv_2$ captures variability within $\Uc$ instead of $\hat\uv_{n_1}$ under the one-component model in Example~\ref{ex:strong_ne_2} and Figure~\ref{fig:strong_ne_2}. $\mathbb{P}(k_0 = 2)$ is empirically estimated under the Gaussian assumption with $(\sigma_{1,1}^2, \sigma_{2,1}^2) = (2, 1)$, $(\tau_1^2, \tau_2^2) = (40, 20)$, ${\rm Angle}{(\uv_{(1),1}, \uv_{(2),1})} = \pi/4$ and $(n_1, n_2) = (20, 20)$. The panels show the average estimates with different $\tau_2$ (left), $\sigma_{2,1}$ (middle), ${\rm Angle}{(\uv_{(1),1}, \uv_{(2),1})}$ (right) while the other parameters are fixed, respectively.}
        \label{fig:k0plot}
    \end{figure}
\end{itemize}

In general, we define the signal subspace $\Sc$ as 
\begin{align}\label{eq:S}
\Sc := {\rm span}(\left\{\hat\uv_i \right\}_{i \in \Dc}, \mdp),
\end{align}
where $\Dc$ is given in Table \ref{table:D}. The signal subspace $\Sc$ is a low-dimensional subspace of the sample space $\Sc_{\Xc}$, and is obtained by removing  noisy directions in $\Sc_{\Xc}$. In case of $m = 0$, we use the convention of $\Dc = \emptyset$, that is, $\Sc = {\rm span}(\mdp)$.

\subsection{Main Theorem}\label{sec:test data piling main Theorem}
We formally establish that projections of $\Yc$ onto the signal subspace $\Sc$, defined in (\ref{eq:S}), are distributed along parallel affine subspaces, one for each class, and that these affine subspaces do not overlap. Write the scaled training data piling distance as
\begin{align}\label{eq:kMDP}
    \kappa_{\textup{MDP}} := p^{-1/2}\|\mdp^\top (\bar{X}_1 - \bar{X}_2) \|_2.
\end{align} 
For $Y \in \Yc$ and a subspace $\Sc$ of $\Real^p$, let $Y_{\Sc} = p^{-1/2} P_{\Sc}Y$, which is a scaled projection of $Y$ onto $\Sc$. Similarly, write ${\bar{X}}_{\Sc} = p^{-1/2} P_{\Sc}\bar{X}$. Lastly, write an orthogonal basis of the common leading eigenspace $\Uc$ as $\uv_1, \ldots, \uv_m$, and set $\Uv_{1,\Sc} = [\uv_{1,\Sc}, \ldots, \uv_{m, \Sc}]$ where $\uv_{i,\Sc} = P_{\Sc}\uv_i$ for $i = 1, \ldots, m$.

\begin{theorem}\label{thm:test data piling}
Suppose Assumptions~\ref{assume:1}---\ref{assume:5} hold. Let $\Sc = {\rm span}(\left\{ \hat\uv_i \right\}_{i \in \Dc}, \mdp)$, where $\Dc$ is defined in Table~\ref{table:D}. Let
\begin{align*}
   \Lc_k := \left\{ \Uv_{1, {\Sc}} \tv + \nu_k \mdp + \bar{X}_{\Sc} : \tv \in \Real^{m} \right\} 
\end{align*}
for $k = 1, 2$ where $\nu_1 = \kappa_{\textup{MDP}}^{-1}(\eta_2(1-\cos^2\varphi)\delta^2 - (\tau_1^2 - \tau_2^2)/n )$ and $\nu_2 = \kappa_{\textup{MDP}}^{-1}(-\eta_1(1-\cos^2\varphi)\delta^2 - (\tau_1^2 - \tau_2^2)/n)$. Then for any independent observation $Y \in \Yc$ and for any $\epsilon > 0$,
\begin{align*}
\lim\limits_{p \to \infty} \mathbb{P} \left( \inf\limits_{a \in \Lc_k} \|Y_{\Sc} - a \| > \epsilon | \pi(Y) = k \right) = 0
\end{align*}
for $k = 1, 2$. 
\end{theorem}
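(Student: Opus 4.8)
\emph{Sketch of the proof.} The plan is to make one elementary geometric reduction and then to evaluate the resulting low-dimensional limit using the asymptotic descriptions of $\Sv_W$ and $\mdp$ from Lemmas~\ref{app:lemma:eval}--\ref{lem:angle} and Appendix~\ref{app:C}. Since $\mdp$ lies in the nullspace of $\Sv_W$ and in $\Sc_X$, it is orthogonal to every $\hat\uv_i$ with $i\in\Dc$, so $\mdp\in\Sc$; moreover, for $v\in\Sc$ one has $v^\top P_{\Sc}\uv_j=v^\top\uv_j$, so the orthogonal complement of $\mathrm{span}(\Uv_{1,\Sc})$ inside $\Sc$ is exactly $\Rc:=\Sc\cap\Uc^{\perp}$, with $P_{\Rc}=P_{\Rc}P_{\Sc}$ and $P_{\Rc}P_{\Uc}=0$. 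Minimizing over the affine parameter $\tv$ in $L_k$ then gives
\[
\inf_{a\in L_k}\|Y_\Sc-a\|=\bigl\|\,p^{-1/2}P_{\Rc}(Y-\bar X)-\nu_k P_{\Rc}\mdp\,\bigr\|,
\]
so it suffices to prove $p^{-1/2}P_{\Rc}(Y-\bar X)\xrightarrow{P}\nu_k P_{\Rc}\mdp$ conditionally on $\pi(Y)=k$.

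\textbf{Step 1 (removing the test point's own randomness).} Decompose $Y=\muv_{(k)}+Y^{(L)}+Y^{(T)}$ into mean, spike, and tail parts, and $\bar X_\ell=\muv_{(\ell)}+\bar X_\ell^{(L)}+\bar E_\ell$, where $\bar E_\ell$ is the tail part of $\bar X_\ell-\muv_{(\ell)}$. If $\beta_k=1$ then $Y^{(L)}\in\Uc_{(k)}\subseteq\Uc$, so $P_{\Rc}Y^{(L)}=0$; if $\beta_k<1$ then $p^{-1/2}\|Y^{(L)}\|\xrightarrow{P}0$. Conditionally on $\Xc$, $\Rc$ is a fixed subspace of bounded dimension and $Y^{(T)}$ has conditional mean zero with conditional covariance $\preceq(\sup_{k,i}\tau_{(k),i}^2)\,\Iv_p$, so $p^{-1/2}\|P_{\Rc}Y^{(T)}\|=O_P(p^{-1/2})\to 0$; the same bounds dispose of $\bar X_\ell^{(L)}$. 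Writing $\bar\muv:=\eta_1\muv_{(1)}+\eta_2\muv_{(2)}$ and using $\muv_{(1)}-\bar\muv=\eta_2\muv$, $\muv_{(2)}-\bar\muv=-\eta_1\muv$, and $P_{\Rc}P_{\Uc}=0$, the claim reduces to
\[
(-1)^{k+1}\eta_{3-k}\,p^{-1/2}P_{\Rc}(\Iv_p-P_{\Uc})\muv\;-\;\sum_{\ell=1}^{2}\eta_\ell\,p^{-1/2}P_{\Rc}\bar E_\ell\;\xrightarrow{P}\;\nu_k\,P_{\Rc}\mdp .
\]

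\textbf{Step 2 (the core computation).} Writing $\dv=\muv+\sum_\ell(-1)^{\ell+1}(\bar X_\ell^{(L)}+\bar E_\ell)$ and using $\bar X_\ell^{(L)}\in\Uc$ (or negligible when $\beta_\ell<1$), one obtains $(\Iv_p-P_{\Uc})\dv=(\Iv_p-P_{\Uc})\muv+\bar E_1-\bar E_2+o_P(p^{1/2})$; combined with $\mdp=P_{R^{\perp}}\dv/\|P_{R^{\perp}}\dv\|$, $R:=\mathrm{range}(\Sv_W)$, and $\|P_{R^{\perp}}\dv\|=p^{1/2}\kappa_{\textup{MDP}}$, this ties $P_{\Rc}\mdp$ to $P_{\Rc}(\Iv_p-P_{\Uc})\muv$ and $P_{\Rc}\bar E_\ell$. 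The bulk of the work is to use the explicit asymptotic structure of $R$, hence of $P_{\Rc}=P_{\Rc}P_{\Sc}$, provided by Appendix~\ref{app:C} and Lemma~\ref{lem:angle}, to show that $p^{-1/2}P_{\Rc}(\Iv_p-P_{\Uc})\muv$, $p^{-1/2}P_{\Rc}\bar E_1$, and $p^{-1/2}P_{\Rc}\bar E_2$ are, in the limit, all scalar multiples of $p^{-1/2}P_{\Rc}\mdp$, and to read off the scalars. These are controlled by the HDLSS law of large numbers: $p^{-1}\|(\Iv_p-P_{\Uc})\muv\|^2\to(1-\cos^2\varphi)\delta^2$, $p^{-1}\|\bar E_\ell\|^2\to\tau_\ell^2/n_\ell$, the asymptotic orthogonalities $p^{-1}\bar E_1^\top\bar E_2\to 0$ and $p^{-1}\bar E_\ell^\top(\Iv_p-P_{\Uc})\muv\to 0$, and the identity $p^{-1}\|P_{R^{\perp}}\dv\|^2=\kappa_{\textup{MDP}}^2$. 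Substituting into the display above, the mean-difference term contributes $(-1)^{k+1}\eta_{3-k}(1-\cos^2\varphi)\delta^2$ to $\kappa_{\textup{MDP}}\nu_k$, while the tail-noise imbalance $-\eta_1 p^{-1}\|\bar E_1\|^2+\eta_2 p^{-1}\|\bar E_2\|^2\to-(\tau_1^2-\tau_2^2)/n$ supplies the remaining term, yielding exactly $\nu_k$. The subcases $m_1=m_2=1$ recover Propositions~\ref{prop:strong spikes equal tail eigenvalues}--\ref{prop:strong spikes unequal tail eigenvalues}; when $m=0$ one has $\Uc=\0v_p$, $\Rc=\mathrm{span}(\mdp)$, $\cos^2\varphi=0$, and the argument collapses to the scalar statement $p^{-1/2}\mdp^\top(Y-\bar X)\xrightarrow{P}\nu_k$.

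\textbf{Main obstacle.} Everything difficult is concentrated in Step~2 for unequal tail eigenvalues ($\tau_1^2>\tau_2^2$): there $\Dc$ contains the extra indices $n_1,\dots,n_1+m_2-1$, the relation of the corresponding $\hat\uv_i$ to $\Uc$ is governed by the block matrix $\Phiv_{\tau_1,\tau_2}$ and by the possibly-random cutoff $k_0$ of Lemma~\ref{app:lemma:eval}(ii), and $R$ splits into the two class-wise simplex pieces of Remark~\ref{rem:eq_vs_ne}. Computing the projections $P_{\Rc}(\Iv_p-P_{\Uc})\muv$ and $P_{\Rc}\bar E_\ell$ and checking their collinearity with $P_{\Rc}\mdp$ in the limit requires the full geometric-representation machinery of Appendix~\ref{app:C} rather than the single-spike picture used in Section~\ref{sec:3-1}.
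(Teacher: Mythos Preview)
Your geometric reduction is elegant and genuinely different from the paper's route. The paper never introduces $\Rc=\Sc\cap\Uc^{\perp}$ or minimizes over $\tv$; instead it \emph{exhibits} a specific point $\nuv^{0}=\Uv_{1,\Sc}\tv^{0}+\nu_k\mdp+\bar X_{\Sc}\in L_k$, with $\tv^{0}$ chosen coordinate-wise in terms of the test point's leading scores $(\zeta_1,\dots,\zeta_{m_k})$, and then verifies $\hat\uv_i^{\top}(Y_\Sc-\nuv^{0})\to 0$ for every $i\in\Dc$ and $\mdp^{\top}(Y_\Sc-\nuv^{0})\to 0$ using the explicit limits of $\hat\uv_i^{\top}\uv_j$, $\hat\uv_i^{\top}\dv$, $\hat\uv_i^{\top}(Y-\bar X)$ and $\mdp^{\top}(Y-\bar X)$ supplied by Lemmas~\ref{app:lemma:evec}--\ref{app:lemma:md}. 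Your Step~1 is cleaner than anything in the paper: $P_\Rc P_\Uc=0$ kills the test point's spike part in one line, whereas the paper carries the random scores $\zeta_k$ through the entire calculation and lets them cancel at the end.

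The gap is in Step~2. Two issues. First, the claim that $p^{-1/2}P_\Rc\bar E_1$ and $p^{-1/2}P_\Rc\bar E_2$ are \emph{individually} asymptotically collinear with $P_\Rc\mdp$ is stronger than you need and not obviously true; from your own Step~1 display only the weighted sum $\sum_\ell\eta_\ell P_\Rc\bar E_\ell$ must be collinear, and indeed running the $k=1$ and $k=2$ reductions and subtracting yields collinearity of $P_\Rc(\Iv_p-P_\Uc)\muv$ and of $\eta_1 P_\Rc\bar E_1+\eta_2 P_\Rc\bar E_2$, but not of $P_\Rc\bar E_1$ alone. Second, and more seriously, the proposed method of ``reading off the scalars'' from $p^{-1}\|\bar E_\ell\|^2\to\tau_\ell^2/n_\ell$ and the pairwise orthogonalities does not work: these quantities govern the full $\Real^p$ geometry, not the projection onto the training-dependent subspace $\Rc$, and knowing $\|\bar E_\ell\|$ tells you nothing about $\|P_\Rc\bar E_\ell\|$ or its sign relative to $P_\Rc\mdp$. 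In the unequal-tail case $\dim\Rc=|\Dc|+1-m>1$, so to show the required combination lies along $P_\Rc\mdp$ you must compute its components in an explicit basis of $\Rc$ (the $f_0,f_1,\dots$ of Section~\ref{sec:5-3}), and that computation is precisely the coordinate-wise work that the paper does directly---with the concrete $\tv^0$ absorbing what you have hidden in the minimization. Your route is viable, but Step~2 ultimately collapses back to the same Appendix~\ref{app:C} calculations the paper performs, not to the norm identities you listed.
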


We remark that, in general, the test data from class 1, $\Yc_1$ projected onto $\Sc$, are distributed along an $m_1$-dimensional affine subspace $\Lc_1'$. Similarly, projections of $\Yc_2$ are distributed along an $m_2$-dimensional affine subspace $\Lc_2'$. 
For each $k = 1, 2$, 
$\Lc_k'$ is parallel to ${\rm span}([P_{\Sc}\uv_{(k),1}, \ldots, P_{\Sc}\uv_{(k),m_k}])$, the subspace spanned by the leading eigenvectors of the class $k$, projected onto the signal subspace $\Sc$. While $\Lc_1'$ and $\Lc_2'$ are not necessarily parallel to each other, each of them are contained in the $m$-dimensional affine subspace $\Lc_k$ (appeared in Theorem~\ref{thm:test data piling}), respectively. 
We provide case-by-case discussions to offer a clearer understanding of Theorem~\ref{thm:test data piling}.

\begin{itemize}
    \item (Weak spikes) If $m = 0$, then Theorem~\ref{thm:test data piling} remains valid with 
    \begin{align*}
        \Sc = {\rm span}(\mdp)
    \end{align*}
    and $\Lc_k = \{ \nu_k \mdp + \bar{X}_{\Sc}\}$ for $k = 1, 2$ and $\cos^2\varphi = 0$. That is, projections of $\Yc_1$ and $\Yc_2$ are asymptotically piled on two distinct points on $\mdp$, one for each class. Note that the piling locations of $\Yc_1$ and $\Yc_2$ do not coincide with each other, nor with those of $\Xc$. Also, $\mdp$ yields the \emph{second} data piling for both cases of equal and unequal tail eigenvalues ($\tau_1 = \tau_2$ and $\tau_1 \neq \tau_2$). In Appendix~\ref{app:weak spikes} of the supplemental materials, we establish that $\mdp$ is indeed a second maximal data piling direction for this case. There, we also define a bias-corrected maximal data piling classification rule $\phi_{\textup{b-MDP}}$ based on $\mdp$ that achieves asymptotic perfect classification regardless of whether $\tau_1 = \tau_2$ or $\tau_1 \ne \tau_2$. 
    
    \item (Strong spikes with equal tail eigenvalues) If $m \ge 1$ and $\tau_1 = \tau_2 =: \tau$, then projections of $\Yc$ onto the $(m+1)$-dimensional subspace 
    \begin{align*}
        \Sc = {\rm span}(\hat\uv_1, \ldots, \hat\uv_m, \mdp)
    \end{align*}
    are distributed along two $m$-dimensional affine subspaces $\Lc_1$ and $\Lc_2$, which become parallel to each other, and also to $P_{\Sc}\Uc := {\rm span}(\uv_{1,\Sc}, \ldots, \uv_{m,\Sc})$, as $p$ increases. Two special cases were discussed in Figure~\ref{fig:intro_figure} (where $\uv_{(1),1} = \uv_{(2),1}$) and Example~\ref{ex:strong_eq_2} (where $\uv_{(1),1} \ne \uv_{(2),1}$) under the one-component spiked covariance model (\ref{eq:one-comp model}). Generally, $\dim\Sc - m = 1$ and there exists a unique direction within $\Sc$ which is orthogonal to both $\Lc_1$ and $\Lc_2$.

    \item (Strong spikes with unequal tail eigenvalues) If $m \ge 1$ and $\tau_1 > \tau_2$, then projections of $\Yc$ onto a carefully chosen signal space $\Sc$ are asymptotically contained in $m$-dimensional parallel affine subspaces. 
    However, in this case, $\Sc$ may not be the subspace spanned by the first $m$ eigenvectors of $\Sv_W$ and $\mdp$. As an illustrative example, consider the one-component spiked covariance model (\ref{eq:one-comp model}) where $m_1 = m_2 = 1$. For the case of $\uv_{(1),1} = \uv_{(2),1}$ (i.e., $m = 1$), $\hat\uv_{n_1}$ always explains the variation of data along the common leading eigenvector $\uv_1 = \uv_{(1),1} = \uv_{(2),1}$ while $\hat\uv_{2}$ does not. That is, for the common eigenspace case, the signal subspace is always the $3$-dimensional subspace $\Sc = {\rm span}(\hat\uv_1, \hat\uv_{n_1}, \mdp)$. In contrast, if $\uv_{(1),1}\ne \uv_{(2),1}$ (i.e., $m = 2$), then as discussed in Example~\ref{ex:strong_ne_2},
    \begin{align*}
        \Sc = \begin{cases}
        {\rm span}(\hat\uv_1, \hat\uv_{n_1}, \mdp) & \text{if~} k_0 = 1, \\
        {\rm span}(\hat\uv_1, \hat\uv_{2}, \mdp) & \text{if~} k_0 = 2, \\
        \end{cases}
    \end{align*}
    is decided at random. In general, $\dim\Sc - m = m_1+m_2-m+1 \ge 1$ and there are multiple directions within $\Sc$ which are orthogonal to both $\Lc_1$ and $\Lc_2$.
\end{itemize}

Theorem~\ref{thm:test data piling} tells that independent test data are asymptotically distributed along parallel $m$-dimensional affine subspaces $\Lc_1$ and $\Lc_2$ in $\Sc$. It implies that if we find a direction $\wv \in \Sc$ such that $\wv$ is asymptotically orthogonal to $\Lc_1$ and $\Lc_2$, then $P_{\wv}\Yc$ yields second data piling and in turn achieves perfect classification of independent test data. Since $\dim{\Sc} - m \ge 1$ for all cases, there always exists a direction $\wv \in \Sc$ which yields second data piling. Meanwhile, any direction $\wv \in \Sc_{\Xc} \setminus \Sc$ also yields second data piling since $\wv$ is asymptotically orthogonal to the common leading eigenspace $\Uc$, resulting in projections of $\Yc_1$ and $\Yc_2$ converging to the same location. Among the many second data piling directions, we will find a second maximal data piling direction, which asymptotically maximizes the distance between the two piles of independent test data.

\section{Estimation of Second Maximal Data Piling Direction}\label{sec:estimation of SMDP}
In this section, we establish that a {second maximal data piling} direction can be obtained by projecting $\mdp$ onto the orthogonal complement of ${\rm span}(\Uv_{1,\Sc})$, which is the nullspace of the common leading eigenspace $\Uc$. This insight is used in estimating the \emph{second maximal data piling} direction based on a data-splitting approach. We then provide classification rules which guarantee asymptotic perfect classification based on such direction.

\subsection{Second Maximal Data Piling Direction for General Heterogeneous Spiked Covariance}\label{sec:SMDP theory}
As a motivating example, we revisit the one-component covariance model (\ref{eq:one-comp model}). Suppose now that two classes share a common leading eigenvector (i.e., $\uv_{(1),1} = \uv_{(2),1} =: \uv_1$) but have unequal tail eigenvalues (i.e., $\tau_1 > \tau_2$). In this case, Theorem~\ref{thm:test data piling} tells that the signal subspace $\Sc$ is ${\rm span}(\hat\uv_1, \hat\uv_{n_1}, \mdp)$ and $\dim{\Sc} - m = 2$. 
It implies that there are (infinitely) many sets of two second data piling directions within $\Sc$ that are orthogonal to each other and to $\uv_{1,\Sc}$. 

Our choices (which turns out to be optimal) of two directions are obtained as follows. Recall that second data piling occurs when a direction $\wv \in \Sc$ is asymptotically orthogonal to $\uv_{1, \Sc} = (\hat\uv_{1}^\top \uv_{1})\hat\uv_1 + (\hat\uv_{n_1}^\top\uv_{1}) \hat\uv_{n_1} + (\mdp^\top \uv_{1})\mdp$. Since $\mdp$ plays a distinct role among the basis of $\Sc$, let $\fv_1 \in \Sc$ be the vector  orthogonal to both $\uv_{1,\Sc}$ and $\mdp$. 
To be specific, 
$\fv_1 \varpropto (-\hat\uv_{n_1}^\top \uv_{1})\hat\uv_1 + (\hat\uv_1^\top\uv_1)\hat\uv_{n_1}$.
The vector $\fv_0 \in \Sc$ parallel to the nullspace of $[\fv_1, \uv_{1,\Sc}]$ completes the pair. It turns out that $\fv_0$ is the orthogonal projection of $\mdp$ onto the orthogonal complement of ${\rm span}(\uv_{1,\Sc})$ within $\Sc$. 

%

\begin{figure}
    \centering \includegraphics[width=0.8\linewidth]{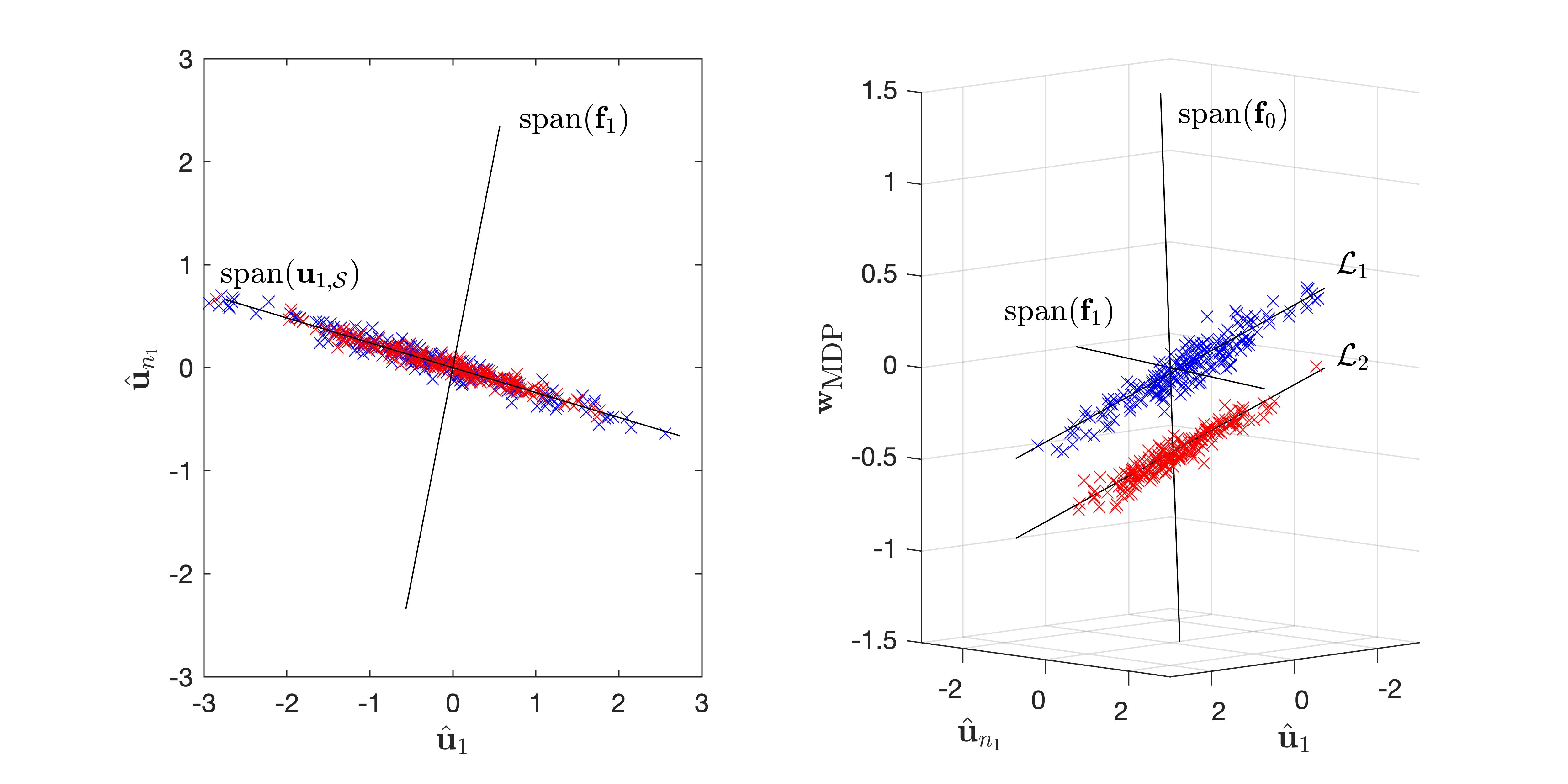}
    \caption{$2$-dimensional projections onto ${\rm span}(\hat\uv_1, \hat\uv_{n_1})$ and $3$-dimensional projections onto $\Sc = {\rm span}(\hat\uv_1, \hat\uv_{n_1}) \oplus {\rm span}(\mdp)$ of independent test data $\Yc$ under the unequal-tail model with $m_1 = m_2 = m = 1$. 
    }
    \label{fig:one_comp_m=1}
\end{figure}

The directions $\fv_0$ and $\fv_1$ are graphically displayed in Figure~\ref{fig:one_comp_m=1}. 
The left panel displays the data on ${\rm span}(\hat\uv_1, \hat\uv_{n_1})$, where the `meaningless' direction $\fv_1$ lies.
Although test data projected on $\fv_1$ exhibit second data piling, the piling gap is nearly zero. 
The right panel of Figure~\ref{fig:one_comp_m=1} shows that the direction $\fv_0 \in \Sc$, which is orthogonal to both of $\uv_{1,\Sc}$ and $\fv_1$, provides a meaningful distance between the two piles of independent test data $\Yc$. It is graphically suggested that $\fv_0$ is a second maximal data piling direction. 

We extend the above discussions to the general cases $m_1 \ge 1$ and $m_2 \ge 1$ and investigate a theoretical second maximal data piling direction. We decompose the signal subspace $\Sc = {\rm span}(\{\hat\uv_i \}_{i \in \Dc}) \oplus {\rm span}(\mdp)$ into three orthogonal parts: 
$$\Sc = {\rm span}(\Uv_{1, \Sc}) \oplus {\rm span}(\Uv_{1,\Sc}^\perp) = {\rm span}(\Uv_{1,\Sc}) \oplus \Tc_p \oplus {\rm span}(\fv_0).$$
Here, ${\rm span}(\Uv_{1,\Sc})$ is the subspace to which the second data piling directions are orthogonal. The orthogonal complement of ${\rm span}(\Uv_{1,\Sc})$ within $\Sc$, ${\rm span}(\Uv_{1,\Sc}^\perp)$ is further decomposed into `useless' data piling part $\Tc_p := {\rm span}(\left\{\hat\uv_i \right\}_{i \in \Dc}) \cap {\rm span}(\Uv_{1,\Sc}^\perp)$ and the rest. While the dimension of $\Tc_p$ is always zero when $\tau_1 = \tau_2$, it becomes $m_1 + m_2 -m$ when $\tau_1 \ne \tau_2$, which is nonnegative but can be zero (depending on the model parameters). The last part, ${\rm span}(\fv_0)$, is  ${\rm span}(\Uv_{1,\Sc}^\perp) \setminus \Tc_p$, for which $\fv_0$ is the orthogonal projection of $\mdp$ onto ${\rm span}(\Uv_{1,\Sc}^\perp)$.


Next result confirms that $\Tc_p$ is indeed not useful, and also characterizes the piling distance on $\fv_0$. 
Recall that $\W_\Xc$ is the collection of all sequences of directions within the sample space $\Sc_\Xc$ (see Definition~\ref{def:SDP}).

\begin{theorem}\label{thm:SMDP piling distance}
Suppose Assumptions \ref{assume:1}---\ref{assume:5} hold. Then,
\begin{itemize}
    \item[(i)] For any $\left\{ \wv \right\} \in \Tc = \left\{\left\{ \wv \right\} \in \W_\Xc : \wv \in \Tc_p \text{ for all } p\right\}$ and any independent observation $Y \in \Yc$, $p^{-1/2}\wv^\top (Y - \bar{X}) \xrightarrow{P} 0$ as $p \to \infty$ whenever $\pi(Y) = 1$ or $\pi(Y) = 2$.
    \item[(ii)] For any independent observation $Y \in \Yc$,
    \begin{align*}
        \frac{1}{\sqrt{p}} \fv_0^\top (Y - \bar{X}) \xrightarrow{P}  \begin{cases} \upsilon_0 (\eta_2 (1-\cos^2\varphi)\delta^2 - (\tau_1^2 - \tau_2^2) /n), & \pi(Y) = 1, \\ \upsilon_0 (-\eta_1 (1-\cos^2\varphi)\delta^2 - (\tau_1^2 - \tau_2^2) /n), & \pi(Y) = 2 \end{cases}
    \end{align*}
    as $p \to \infty$ where $\upsilon_0$ is a strictly positive random variable depending on the true principal component scores of $\Xc$.
\end{itemize}
\end{theorem}

The above result naturally implies that $\fv_0$ is a second maximal data piling direction, as we state next. Recall that $\Ac$ is the collection of all sequences of second data piling directions, and the second maximal data piling direction maximizes the asymptotic piling distance (Definitions~\ref{def:SDP} and~\ref{def:SMDP}).

\begin{theorem}\label{thm:SMDP characterization}
Suppose Assumptions \ref{assume:1}---\ref{assume:5} hold. Then, 
\begin{itemize}
    \item[(i)] For any given $\left\{ \wv \right\} \in \Ac$, there exists a sequence $\left\{ \vv \right\} \in \Bc$ such that $\| \wv - \vv \| \xrightarrow{P} 0$ as $p \to \infty$, where $\Bc = \left\{\left\{ \vv \right\} \in \W_\Xc : \vv \in {\rm span}(\fv_0) \oplus \Tc_p \oplus {\rm span}(\left\{\hat\uv_i \right\}_{i \in \left\{1, \ldots, n-2 \right\} \setminus \Dc}) \right\}$.
    
    \item[(ii)] For any $\left\{\wv \right\} \in \Ac$ such that the limiting piling distance of independent test data, $D(\wv)$, exists, $\left\{ \wv \right\}$ is a sequence of second maximal data piling directions if and only if $\|\wv - \fv_0 \| \xrightarrow{P} 0$ as $p \to \infty$.
\end{itemize}
\end{theorem}

It is important to note that a second maximal data piling direction is a direction $\wv \in \Sc$ which is asymptotically orthogonal to both of ${\rm span}(\Uv_{1,\Sc})$ and $\Tc_p$. A second maximal data piling direction lies in ${\rm span}(\Uv_{1,\Sc}^\perp)$, but is also orthogonal to $\Tc_p$ that is orthogonal to $\mdp$. This observation gives a unified view on the theoretical second maximal data piling direction and the estimates of such directions:

\begin{quote}
\normalsize
    ``Second maximal data piling directions are the projection of $\mdp$ onto the orthogonal complement of ${\rm span}(\Uv_{1,\Sc})$ (or its estimate).''
\end{quote}

A critical issue is that the signal subspace $\Sc$ is decided at random in general cases (e.g., strong spikes with unequal tail eigenvalues). This makes it difficult to determine the signal subspace $\Sc$ as well as the orthogonal complement of ${\rm span}(\Uv_{1,\Sc})$. Nevertheless, the above intuition can be utilized in developing an estimation strategy for the second maximal data piling direction. 


\subsection{Data-splitting Approach for Second Maximal Data Piling Direction}\label{sec:data-splitting approach}
In this section, we propose a data-splitting approach to estimate a second maximal data piling direction.
%
Suppose for now that 
an independent test dataset $\Yc$ is available to us. (Later, we will split the original training data to into training and test data sets.) 
The idea we use is to estimate ${\rm span}(\Uv_{1,\Sc})$ using $\Yc$, and to project $\mdp$ (obtained from $\Xc$) onto  the orthogonal complement of the estimate of ${\rm span}(\Uv_{1,\Sc})$. In the following, we verify that such a data-splitting approach is theoretically sound, and outline algorithmic procedures. 

Denote the horizontally concatenated data matrix of the given independent test dataset $\Yc$ by $\Yv = [Y_{11},\ldots,Y_{1n_1^*},Y_{21},\ldots,Y_{2n_2^*}]$. The $p \times n^*$ data matrix $\Yv$ consists of the $n^*:=n_1^*+n_2^*$ observations independent to $\Xc$ and arranged so that $\pi(Y_{kj}) = k$ for any $k,j$. We assume that $n_k^*$ is fixed and $n_k^* > m_k$ for $k = 1, 2$. Write class-wise sample mean vectors $\bar{Y}_k = n_k^{*-1}\sum_{j=1}^{n_k^*}Y_{kj}$. We define the within-scatter matrix of $\Yc$ as $\Sv_W^* = (\Yv - \bar{\Yv})(\Yv - \bar{\Yv})^\top,$
where $\bar{\Yv}_k = \bar{Y}_k \1v_{n_k^*}^\top$ for $k = 1, 2$ and $\bar{\Yv} = [\bar{\Yv}_1,~\bar{\Yv}_2]$. 
Let $\Vv = [\hat\uv_{1}, \ldots, \hat\uv_{n-2}, \mdp]$, which collects an orthonormal basis of the sample space $\Sc_{\Xc}$ (from the training dataset $\Xc$). 

We are interested in characterizing the second data piling directions $\wv \in \Sc_{\Xc}$ (or the sequence 
$\left\{\wv\right\} \in \W_\Xc$). Observe that $\wv$ is a second data piling direction if  the (scaled) within-class scatter $p^{-1}\wv^\top \Sv_{W}^{*}\wv$ becomes negligible. Thus, we write the collection of sequences of second data piling directions (for the case where both $\Xc$ and $\Yc$ are available) as
%
\begin{align*}
    \bar{\Ac} := \left\{\left\{\wv  \right\} \in \W_\Xc : \frac{1}{p}\wv^\top \Sv_W^* \wv \xrightarrow{P} 0 \text{ as } p \to \infty \right\}.
\end{align*}

For any $\left\{\wv\right\} \in \W_\Xc$, we can write $\wv = \Vv\av$ for some $\av = (a_1, \ldots, a_{n-2}, a_{\textup{MDP}})^\top \in \Real^{n-1}$. Without loss of generality, we assume $a_{\textup{MDP}} \ge 0$ for all $p$. For $\left\{ \wv \right\} \in \W_{\Xc}$, we can write 
\begin{align}\label{eq:scatter}
\frac{1}{p}\wv^\top \Sv_{W}^{*}\wv = \frac{1}{p}\av^\top \Vv^\top \Sv_{W}^{*}\Vv\av = \av^\top \left(\frac{1}{p}\Vv^\top\Sv_{W}^{*}\Vv\right)\av.
\end{align}
Note that the $(n-1) \times (n-1)$ matrix $p^{-1}\Vv^\top \Sv_{W}^*\Vv$ can be understood as the scatter of the independent test data $\Yc$ projected onto the sample space $\Sc_{\Xc}$. Theorem~\ref{thm:scatter of projected test data} shows that independent test data $\Yc$ are asymptotically supported on a $m$-dimensional subspace in $\Sc_{\Xc}$.

\begin{theorem}\label{thm:scatter of projected test data}
Suppose Assumptions \ref{assume:1}---\ref{assume:5} hold. Then $p^{-1}\Vv^\top\Sv_{W}^{*}\Vv$ converges to a rank-$m$ matrix in probability as $p \to \infty$.
\end{theorem}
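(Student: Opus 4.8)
The plan is to expand $p^{-1}\Vv^\top\Sv_W^{*}\Vv$ through the true principal component scores of the independent test data and to show that, after the $p^{-1/2}$ rescaling, only the leading principal component directions of the classes with $\beta_k=1$ survive. Writing $\Sv_W^{*}=\sum_{k=1}^{2}\sum_{j=1}^{n_k^{*}}(Y_{kj}-\bar{Y}_k)(Y_{kj}-\bar{Y}_k)^\top$, one has
\[
\tfrac1p\Vv^\top\Sv_W^{*}\Vv=\sum_{k=1}^{2}\sum_{j=1}^{n_k^{*}}\bv_{kj}\bv_{kj}^\top,\qquad \bv_{kj}:=\tfrac1{\sqrt p}\Vv^\top(Y_{kj}-\bar{Y}_k)\in\Real^{n-1},
\]
so it suffices to identify the probability limit of each $\bv_{kj}$. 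Since $Y_{kj}-\bar{Y}_k=W_{kj}-\bar{W}_k$ with $W_{kl}:=Y_{kl}-\muv_{(k)}=\Uv_{(k)}\Lambdav_{(k)}^{1/2}z_{kl}$ (the $z_{kl}$ being the test principal component scores, independent of $\Xc$ and from the same population), I would split $W_{kl}$ into its leading part $\Uv_{(k),1}\Lambdav_{(k),1}^{1/2}\tilde{z}_{kl}^{(1)}$ (eigen-indices $i\le m_k$) and its tail part $\Uv_{(k),2}\Lambdav_{(k),2}^{1/2}\tilde{z}_{kl}^{(2)}$ (indices $i>m_k$), where $\Uv_{(k),2}$ collects the non-leading eigenvectors of $\Sigmav_{(k)}$, $\Lambdav_{(k),1}=\diag(\lambda_{(k),1},\dots,\lambda_{(k),m_k})$, $\Lambdav_{(k),2}=\diag(\tau_{(k),m_k+1}^{2},\dots)$, and $\tilde{z}^{(1)},\tilde{z}^{(2)}$ are the corresponding subvectors of $z$.

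First I would dispose of the tail: since the sphered scores are uncorrelated with variance bounded by $B:=\sup_{k,i}\tau_{(k),i}^{2}$ and each column of $\Vv$ has unit norm, $\E\,\|p^{-1/2}\Vv^\top\Uv_{(k),2}\Lambdav_{(k),2}^{1/2}(\tilde{z}_{kj}^{(2)}-\bar{\tilde{z}}_k^{(2)})\|^{2}\le (n-1)B/p\to0$, so this piece $\xrightarrow{P}\0v$. For the leading part, $p^{-1/2}\Lambdav_{(k),1}^{1/2}=\diag\bigl(p^{-1/2}\sqrt{\sigma_{(k),i}^{2}p^{\beta_k}+\tau_{(k),i}^{2}}\bigr)_{1\le i\le m_k}$, which tends to $\0v$ when $\beta_k<1$ and to $\tilde\sigma_{(k)}:=\diag(\sigma_{(k),1},\dots,\sigma_{(k),m_k})$ when $\beta_k=1$. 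When $\beta_k=1$ one has $\Uc_{(k)}\subseteq\Uc$, hence $\Uv_{(k),1}=\Uv_1\Rv_{(k)}^{(p)}$ with $\Rv_{(k)}^{(p)}\to\Rv_{(k)}$ (Assumption~\ref{assume:5}); combining this with the fact --- carried by Lemma~\ref{lem:angle}, Theorem~\ref{thm:test data piling} and the spectral analysis of $\Sv_W$ in Appendix~\ref{app:C} --- that $\Vv^\top\Uv_1\xrightarrow{P}\Rv_\infty$ for a random $(n-1)\times m$ matrix $\Rv_\infty$ whose column space is $m$-dimensional, one obtains $\bv_{kj}\xrightarrow{P}\0v$ when $\beta_k<1$ and $\bv_{kj}\xrightarrow{P}\gv_{kj}:=\Rv_\infty\Rv_{(k)}(\wv_{kj}^{*}-\bar{\wv}_k^{*})$ when $\beta_k=1$, where $\wv_{kj}^{*}:=\tilde\sigma_{(k)}\tilde{z}_{kj}^{(1)}$ and $\bar{\wv}_k^{*}:=n_k^{*-1}\sum_l\wv_{kl}^{*}$. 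Therefore $p^{-1}\Vv^\top\Sv_W^{*}\Vv\xrightarrow{P}\Gv:=\sum_{k:\beta_k=1}\sum_{j=1}^{n_k^{*}}\gv_{kj}\gv_{kj}^\top$.

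It then remains to verify $\rank(\Gv)=m$. For each $k$ with $\beta_k=1$, $\sum_j\gv_{kj}\gv_{kj}^\top=\Rv_\infty\Rv_{(k)}\Phiv_k^{*}\Rv_{(k)}^\top\Rv_\infty^\top$, where $\Phiv_k^{*}:=\sum_{j=1}^{n_k^{*}}(\wv_{kj}^{*}-\bar{\wv}_k^{*})(\wv_{kj}^{*}-\bar{\wv}_k^{*})^\top$ is the test-sample analogue of $\Phiv_k$ in~(\ref{eq:covariance of pc scores}); it is $m_k\times m_k$ and a.s.\ positive definite because the test distribution is absolutely continuous and $n_k^{*}>m_k$. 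Hence $\Gv=\Rv_\infty\Mv\Rv_\infty^\top$ with $\Mv:=\sum_{k:\beta_k=1}\Rv_{(k)}\Phiv_k^{*}\Rv_{(k)}^\top=\Rv_\bullet\,\mathrm{blockdiag}\bigl(\Phiv_k^{*}\bigr)_{k:\beta_k=1}\,\Rv_\bullet^\top$, where $\Rv_\bullet$ places the matrices $\Rv_{(k)}$ with $\beta_k=1$ side by side. Since $\mathrm{blockdiag}(\Phiv_k^{*})\succ0$ and $\Rv_\bullet$ has rank $m$ --- when $\beta_1=\beta_2=1$ because $\Rv=[\Rv_{(1)}~\Rv_{(2)}]$ has rank $m$ by Assumption~\ref{assume:5}, and when only one $\beta_k=1$ because then $\Uc=\Uc_{(k)}$ so $\Rv_{(k)}$ is an $m\times m$ orthogonal matrix --- the $m\times m$ matrix $\Mv$ is positive definite, and therefore $\rank(\Gv)=\rank(\Rv_\infty)=m$; the case $m=0$ (i.e.\ $\beta_1,\beta_2<1$) reduces to the trivial statement $\Gv=\0v$.

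The step I expect to be the main obstacle is the ingredient quoted from the earlier development: that $\Vv^\top\Uv_1$ --- equivalently, the inner products $\hat\uv_i^\top\uv_j$ of the sample eigenvectors of $\Sv_W$ with a basis of $\Uc$, together with $\mdp^\top\uv_j$ --- converges in probability, and that the limit $\Rv_\infty$ has full column rank $m$, i.e.\ that $P_{\Sc_X}\Uc$ is non-degenerate. This non-degeneracy is exactly what forces the affine pieces $L_k$ in Theorem~\ref{thm:test data piling} to be genuinely $m$-dimensional, and it is established through the spectral analysis behind Lemma~\ref{lem:angle} (and Appendix~\ref{app:C}); once it is granted, the two remaining pieces --- the $O(1/p)$ variance bound annihilating the tail, and the rank computation built on the rank-$m$ matrix $\Rv$ --- are routine. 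I would also note that the $\hat\uv_i$ and $\mdp$ are determined only up to sign, but then $p^{-1}\Vv^\top\Sv_W^{*}\Vv$ is merely conjugated by a diagonal sign matrix, so its rank --- and hence the statement --- is unambiguous.
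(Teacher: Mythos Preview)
Your argument is correct and reaches the same limit as the paper, but the route is genuinely different. The paper's proof passes through the eigendecomposition of $\Sv_W^{*}$: it computes the limits of $\hat\uv_i^\top\hat\uv_j^{*}$ and $\mdp^\top\hat\uv_j^{*}$, assembles these into the matrix $\Xiv=\Omegav\tilde\Omegav_1^{*\top}$ (where $\Omegav$ is exactly your $\Rv_\infty$), writes the limit as $\Lv=\Xiv\,\diag(\phi_{l'}(\Phiv_{\tau_1,\tau_2}^{*}))\,\Xiv^\top$, and concludes $\rank(\Lv)=m$ via Sylvester's inequality. You instead bypass the test-side eigendecomposition entirely by working with the individual projections $\bv_{kj}$ and landing directly on $\Gv=\Rv_\infty\Mv\Rv_\infty^\top$ with $\Mv=\sum_{k:\beta_k=1}\Rv_{(k)}\Phiv_k^{*}\Rv_{(k)}^\top$ positive definite; one can check that $\tilde\Omegav_1^{*\top}\diag(\phi_{l'})\tilde\Omegav_1^{*}=\Mv$, so the two limits coincide. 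Your path is shorter and more transparent for the rank claim; the paper's detour has the side benefit of setting up the eigenstructure of $\Lv$ that is reused immediately afterwards for $\hat\Qv_2$ in Theorems~\ref{thm:data-splitting approach asymptotically orthogonal}--\ref{thm:data-splitting approach SMDP}. The ingredient you flag as the main obstacle---that $\Rv_\infty$ has full column rank $m$---is exactly the fact the paper records as ``$\tilde\Omegav_1$ is of rank $m$,'' and it follows because $\tilde\Omegav_1=D^{-1/2}\Vv(\Phiv_{\tau_1,\tau_2})^\top[\Rv_{(1)}\Phiv_1^{1/2}\;\Rv_{(2)}\Phiv_2^{1/2}]^\top$ with $\Vv(\Phiv_{\tau_1,\tau_2})$ orthogonal, the $\Phiv_k$ almost surely positive definite, and $\Rv=[\Rv_{(1)}\;\Rv_{(2)}]$ of rank $m$ by Assumption~\ref{assume:5}; this is more direct than inferring it from the geometry of $L_k$ in Theorem~\ref{thm:test data piling}.
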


Write the eigen-decomposition of the scatter matrix as follows: 
$$p^{-1}\Vv^\top \Sv_{W}^{*}\Vv=\hat\Qv\Hv\hat\Qv^\top,$$ where $\Hv=\diag{(h_1,\ldots,h_{n-1})}$ arranged in descending order, and $\hat\Qv =[\hat\Qv_1,~\hat\Qv_2]$ with $\hat\Qv_1=[\hat\qv_1,\ldots,\hat\qv_{m}]$ and $\hat\Qv_2 = [\hat\qv_{m+1},\ldots,\hat\qv_{n-1}]$. Accordingly, the coefficients $\av$ can be expressed as $\av = \hat\Qv \iotav = \sum_{i=1}^{n-1} \iota_i \hat\qv_i$ by $\iotav = (\iota_1, \ldots, \iota_{n-1})^\top$. Since
\begin{align}\label{eq:first_m}
\frac{1}{p}\wv^\top \Sv_{W}^{*}\wv = \av^\top \left(\frac{1}{p}\Vv^\top\Sv_{W}^{*}\Vv\right)\av = \sum_{i=1}^{n-1}h_i\iota_i^2 = \sum_{i=1}^{m}h_i\iota_i^2 + o_p(1)
\end{align}
by Theorem~\ref{thm:scatter of projected test data} and (\ref{eq:scatter}), $\left\{\wv\right\} \in \bar{\Ac}$ if and only if $\iota_1, \ldots, \iota_{m} \xrightarrow{P} 0$ as $p \to \infty$.
In other words, any second data piling direction $\{\wv\} \in \bar{\Ac}$ satisfies $\wv \in {\rm span}(\Vv\hat\Qv_2)$ (asymptotically).
%
This fact plays a crucial role in our next observation: $\left\{\wv \right\} \in \bar{\Ac}$ is indeed asymptotically orthogonal to the common leading eigenspace $\Uc$.  

\begin{theorem}\label{thm:data-splitting approach piling distance}
Suppose Assumptions \ref{assume:1}---\ref{assume:5} hold. Then for any given $\left\{\wv \right\} \in \bar{\Ac}$, (i) $\wv^\top \uv_{j} \xrightarrow{P} 0$ for $1 \leq j \leq m$. (ii) Write $$\wv = \Vv\av = \sum_{k=1}^{n-2}a_k \hat\uv_{k} + a_{\textup{MDP}}\mdp$$ with $\av = (a_1, \ldots, a_{n-2}, a_{\textup{MDP}})^\top$ and assume $a_{\textup{MDP}} \xrightarrow{P} \psi_{\textup{MDP}}$ as $p \to \infty$. Then for any independent observation $Y$, which is independent to both of $\Xc$ and $\Yc$, 
\begin{align*}
    \frac{1}{\sqrt{p}} \wv^\top(Y-\bar{X}) \xrightarrow{P} \begin{cases} \frac{\psi_{\textup{MDP}}}{\kappa}(\eta_2(1-\cos^2\varphi)\delta^2 - (\tau_1^2 - \tau_2^2)/n), & \pi(Y) = 1, \\ \frac{\psi_{\textup{MDP}}}{\kappa}(-\eta_1(1-\cos^2\varphi)\delta^2 - (\tau_1^2 - \tau_2^2)/n), & \pi(Y) = 2, \end{cases}
\end{align*}
as $p\to\infty$, where $\kappa$ is the probability limit of $\kappa_{\textup{MDP}}$ defined in (\ref{eq:kMDP}).
\end{theorem} 

The above theorem also implies that for a carefully chosen $\av$, the direction $\wv$ can be used to yield  perfect classification of new independent observation $Y$, which is independent to both of $\Xc$ and $\Yc$. Requirements for such an $\av$ are $\left\{\wv \right\} \in \bar{\Ac}$ (where $\wv = \Vv\av$) and  $\lim_{p \to \infty} a_{\textup{MDP}} > 0$. 

Theorem~\ref{thm:data-splitting approach piling distance} also implies that the piling gap is maximized when the limiting value $\psi_{\textup{MDP}}$ of $a_{\textup{MDP}}$ is largest. 
Combining this observation with (\ref{eq:first_m}), it is natural to project $\mdp$ onto ${\rm span}(\Vv \hat\Qv_2)$: $\mdp$ is used to maximize $\psi_{\textup{MDP}}$, and ${\rm span}(\Vv \hat\Qv_2)$ is used to ensure second data piling. Next result confirms that such an estimate is indeed maximal. 




\begin{theorem}\label{thm:data-splitting approach SMDP}
Suppose Assumptions \ref{assume:1}---\ref{assume:5} hold. Write $\ev_{\textup{MDP}} = (\0v_{n-2}^\top,~1)^\top$ so that $\mdp = \Vv \ev_{\textup{MDP}}$. Also, let $\left\{\smdp  \right\}$ be a sequence of directions such that $\smdp := \Vv \av_{\textup{SMDP}}$ where $$\av_{\textup{SMDP}} := \frac{P_{{\rm span}(\hat{\Qv}_2)}{\ev_{\textup{MDP}}}}{ \|P_{{\rm span}(\hat{\Qv}_2)}{\ev_{\textup{MDP}}} \|} = \frac{ \hat{\Qv}_2\hat{\Qv}_2^\top \ev_{\textup{MDP}}}{\| \hat{\Qv}_2\hat{\Qv}_2^\top \ev_{\textup{MDP}} \|} \in \Real^{n-1}.$$ Then $\{\smdp \}$ is a sequence of second maximal data piling directions.
\end{theorem}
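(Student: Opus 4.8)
The plan is to work entirely in the orthonormal coordinates supplied by $\Vv$: every $\{w\}\in\W_X$ is $w=\Vv\av$ with $\av\in\Real^{n-1}$, $\|\av\|=1$, and $a_{\textup{MDP}}=\ev_{\textup{MDP}}^\top\av\in[0,1]$. First I would record two facts already available. From the discussion preceding the theorem, $\{w\}\in\bar\Ac$ is equivalent to $\hat\Qv_1\hat\Qv_1^\top\av\xrightarrow{P}\0v$ (that is, $\iota_1,\dots,\iota_m\xrightarrow{P}0$), so $\av$ is asymptotically contained in ${\rm span}(\hat\Qv_2)$; and, applying Theorem~\ref{thm:data-splitting approach perfect classification} to two fresh observations $Y_1,Y_2$ with $\pi(Y_k)=k$, for any $\{w\}\in\bar\Ac$ along which $a_{\textup{MDP}}\xrightarrow{P}\psi_{\textup{MDP}}$ one gets
\begin{align*}
D(w)=\frac{\psi_{\textup{MDP}}}{\kappa}\,(1-\cos^2\varphi)\,\delta^2 .
\end{align*}
So, over $\bar\Ac$, maximizing $D$ is the same as maximizing the probability limit of $a_{\textup{MDP}}$. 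I would also note $\Ac=\bar\Ac$ under Assumptions~\ref{assume:1}--\ref{assume:5}: $\bar\Ac\subseteq\Ac$ by Theorem~\ref{thm:data-splitting approach asymptotically orthogonal} together with the characterization $\Ac=\{\{w\}\in\W_X:w^\top\uv_i\xrightarrow{P}0,\ 1\le i\le m\}$, and $\Ac\subseteq\bar\Ac$ because $p^{-1}\Sv_W^{*}$ asymptotically concentrates on the common leading eigenspace $\Uc$ (Theorem~\ref{thm:scatter of projected test data}); hence the optimization defining a second maximal data piling direction may be carried out over $\bar\Ac$.

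The core is a one-line variational computation. For $\{w\}\in\bar\Ac$, split $a_{\textup{MDP}}=\ev_{\textup{MDP}}^\top\av=(\hat\Qv_2\hat\Qv_2^\top\ev_{\textup{MDP}})^\top\av+\ev_{\textup{MDP}}^\top\hat\Qv_1\hat\Qv_1^\top\av=(\hat\Qv_2\hat\Qv_2^\top\ev_{\textup{MDP}})^\top\av+o_p(1)$, using $\|\ev_{\textup{MDP}}\|=1$ and $\|\hat\Qv_1\hat\Qv_1^\top\av\|=o_p(1)$. Cauchy--Schwarz with $\|\av\|=1$ then gives $a_{\textup{MDP}}\le\|\hat\Qv_2\hat\Qv_2^\top\ev_{\textup{MDP}}\|+o_p(1)$. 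By Theorem~\ref{thm:scatter of projected test data}, $p^{-1}\Vv^\top\Sv_W^{*}\Vv$ converges in probability to a fixed matrix of rank exactly $m$, which therefore has a spectral gap between its $m$th and $(m+1)$th eigenvalues; hence the bottom eigenprojection $\hat\Qv_2\hat\Qv_2^\top=\Iv-\hat\Qv_1\hat\Qv_1^\top$ converges in probability to the orthogonal projection onto the kernel of that limit, so $\|\hat\Qv_2\hat\Qv_2^\top\ev_{\textup{MDP}}\|\xrightarrow{P}\psi^{*}$ for a constant $\psi^{*}>0$ (positivity is exactly what makes $\av_{\textup{SMDP}}$ well defined; equivalently $\mdp$ is not asymptotically inside $\Uc$). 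Substituting $\av=\av_{\textup{SMDP}}=\hat\Qv_2\hat\Qv_2^\top\ev_{\textup{MDP}}/\|\hat\Qv_2\hat\Qv_2^\top\ev_{\textup{MDP}}\|$, which satisfies $\hat\Qv_1\hat\Qv_1^\top\av_{\textup{SMDP}}=\0v$ identically so that $\{\smdp\}\in\bar\Ac$, attains the bound: $a_{\textup{MDP}}=\|\hat\Qv_2\hat\Qv_2^\top\ev_{\textup{MDP}}\|\xrightarrow{P}\psi^{*}$. Combined with the displayed formula, $D(\smdp)=\tfrac{\psi^{*}}{\kappa}(1-\cos^2\varphi)\delta^2=\sup_{\{w\}\in\bar\Ac}D(w)$, so $\{\smdp\}$ is a sequence of second maximal data piling directions.

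For the equivalence, the ``if'' direction is immediate: $\|w-\smdp\|\xrightarrow{P}0$ forces $\|\av-\av_{\textup{SMDP}}\|\xrightarrow{P}0$ (since $\Vv$ has orthonormal columns), hence $a_{\textup{MDP}}\xrightarrow{P}\psi^{*}$, hence $D(w)=D(\smdp)$ is maximal, and $\{w\}\in\bar\Ac$ by hypothesis. For ``only if'', let $\{w\}\in\bar\Ac$ be second maximal; then $D(w)$ exists, which — inspecting the proof of Theorem~\ref{thm:data-splitting approach perfect classification}, where $p^{-1/2}w^\top(Y-\bar X)$ equals $\kappa_{\textup{MDP}}^{-1}a_{\textup{MDP}}$ times a class constant up to $o_p(1)$ and $\kappa_{\textup{MDP}}\xrightarrow{P}\kappa>0$ — forces $a_{\textup{MDP}}\xrightarrow{P}\psi_{\textup{MDP}}$ for some constant; maximality and the displayed formula give $\psi_{\textup{MDP}}=\psi^{*}$, i.e.\ equality in the Cauchy--Schwarz step. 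Since $\langle\av,\av_{\textup{SMDP}}\rangle=(\hat\Qv_2\hat\Qv_2^\top\ev_{\textup{MDP}})^\top\av/\|\hat\Qv_2\hat\Qv_2^\top\ev_{\textup{MDP}}\|=(a_{\textup{MDP}}+o_p(1))/\|\hat\Qv_2\hat\Qv_2^\top\ev_{\textup{MDP}}\|\xrightarrow{P}1$, we get $\|\av-\av_{\textup{SMDP}}\|^2=2-2\langle\av,\av_{\textup{SMDP}}\rangle\xrightarrow{P}0$, i.e.\ $\|w-\smdp\|\xrightarrow{P}0$.

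The main obstacle is the spectral-continuity step: extracting a genuine probability limit for the eigenprojection $\hat\Qv_2\hat\Qv_2^\top$ of the random matrix $p^{-1}\Vv^\top\Sv_W^{*}\Vv$, which is precisely where Theorem~\ref{thm:scatter of projected test data} (limit of rank \emph{exactly} $m$, hence a genuine gap) is essential, together with the attendant bookkeeping that $\psi^{*}>0$ and $(1-\cos^2\varphi)\delta^2>0$ (the latter implicit throughout, without which $D\equiv0$ on $\bar\Ac$ and the statement degenerates). Everything else is Cauchy--Schwarz in $\Real^{n-1}$ and a direct appeal to Theorem~\ref{thm:data-splitting approach perfect classification}.
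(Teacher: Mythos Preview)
Your argument is correct and takes a genuinely different route from the paper's. The paper proceeds by building an orthonormal basis $\{\smdp,\fv_1,\dots,\fv_{n-m-2}\}$ of ${\rm span}(\Vv\hat\Qv_2)$, verifying $D(\fv_i)=0$ for each $i$ via the orthogonality $\smdp^\top\fv_i=0$, and then decomposing an arbitrary $\{w\}\in\bar\Ac$ in this basis to mimic the proof of Theorem~\ref{thm:SDP equal tail eigenvalues}. Your Cauchy--Schwarz approach bypasses the basis construction entirely: you bound $a_{\textup{MDP}}$ directly by $\|\hat\Qv_2\hat\Qv_2^\top\ev_{\textup{MDP}}\|+o_p(1)$, observe that $\av_{\textup{SMDP}}$ saturates the bound, and then read off the equality case. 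This is more economical and makes the optimality transparent as a one-line inequality in $\Real^{n-1}$. The paper's route, by contrast, yields the explicit identification $\psi^{*}=\kappa\upsilon_0$ (via the computations behind Theorem~\ref{thm:perfect classification SMDP Case IV}), tying the data-splitting estimator back to the theoretical $f_0$ of Section~\ref{sec:5-3}.

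Two small points. First, your aside that $\Ac=\bar\Ac$ is not needed for the theorem as the paper proves it (the paper maximizes over $\bar\Ac$), and your sketch of $\Ac\subseteq\bar\Ac$ is incomplete: you would need that the column space of the limit $\Lv$ of $p^{-1}\Vv^\top\Sv_W^{*}\Vv$ coincides with ${\rm span}(\Omegav)$, which is what the proof of Theorem~\ref{thm:scatter of projected test data} actually establishes but which you do not invoke. Second, the limit $\psi^{*}$ is not a deterministic constant but a random variable depending on the leading principal component scores of $\Xc$ and $\Yc$ (as is $\upsilon_0$ in the paper); your spectral-continuity argument still goes through conditionally, but the wording should reflect this.
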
 

We remark that the above procedures are analogous to the optimization problem in (\ref{eq:MDP}). Recall that $\mdp$ is the solution of (\ref{eq:MDP}) which maximizes the training data piling distance (i.e., $\wv^\top \Sv_{B} \wv$) among the (first) data piling directions (i.e., $\wv^\top \Sv_{W}\wv = 0$). Heuristically, our procedures can be seen as finding a sequence of directions $\{\wv\} \in \W_\Xc$, which maximizes the asymptotic distance between two piles of independent test data (i.e., $\lim_{p \to \infty}\wv^\top \Sv_{B}^*\wv$ where $\Sv_B^*$ is the between-class scatter matrix of $\Yc$) among the second data piling directions (i.e., $\lim_{p \to \infty} p^{-1} \wv^\top \Sv_{W}^* \wv = 0$). 

\subsection{Computational Algorithm}\label{sec:computational algorithm}
We have shown that a second maximal data piling direction in the sample space $\Sc_{\Xc}$ can be obtained with a help of independent test data. In practice, we randomly split $\Xc_k$, which is the original training dataset of the $k$th class, into training dataset $\Xc_{k,tr}$ and test dataset $\Xc_{k,te}$ so that the sample size of test data of $k$th class $n_{k,te}$ is larger than $m_k$ for $k = 1, 2$. Then, the estimate $\smdp$ is computed using both $\Xc_{tr} = \Xc_{1,tr} \cup \Xc_{2,tr}$ and $\Xc_{te} = \Xc_{1,te} \cup \Xc_{2,te}$. 

The small sample size characteristic of HDLSS data suggests that classification based on a single data split may be unreliable in practice (albeit theoretically valid).
In order to resolve this concern, we repeat the above procedure several times and set a final estimate of a second maximal data piling direction as the average of estimates of a second maximal data piling direction obtained from each repetition. A detailed algorithm is given in Algorithm~\ref{alg:1}. 



The classification rule $\phi_{\textup{SMDP-I}}$ of Algorithm~\ref{alg:1} ensures perfect classification of independent test data under the HDLSS asymptotic regime (by Theorem~\ref{thm:data-splitting approach piling distance}). In Algorithm~\ref{alg:1}, we also estimate a bias term due to the difference between $\tau_1^2$ and $\tau_2^2$ as $g_{j, \textup{SMDP}}$ in line~\ref{eq:alg1d} for each repetition. We note that $\hat\alpha_k$ in line~\ref{eq:alg1c} is an HDLSS-consistent estimator of $-\tau_k^2$; see Appendix~\ref{app:weak spikes} in the supplemental materials. In fact, we do not need to estimate this term since projections of $\Xc_{te}$ onto $\smdp$ tend to converge two distinct points, one for each class (due to the second data piling). Hence, we propose another classification rule $\phi_{\textup{SMDP-II}}$, which is almost same as $\phi_{\textup{SMDP-I}}$ except that $\phi_{\textup{SMDP-II}}$ computes the the classification threshold by utilizing the simple LDA rule. 
Taking this approach eliminates the need to estimate $m_1$ and $m_2$. A detailed algorithm is given in Algorithm~\ref{alg:2}. 

While we have assumed $\tau_1 \ge \tau_2$ throughout the paper, the proposed algorithms achieve asymptotic perfect classification regardless of whether $\tau_1 \ge \tau_2$ or $\tau_1 < \tau_2$. Also, we do not assume that the tail eigenvalues $\tau_1^2$ and $\tau_2^2$ are known a priori or well-separated. The only requirements are estimates of $m_1$, $m_2$ and $m$, which are the true numbers of leading eigenvalues of $\Sigmav_{(1)}$, $\Sigmav_{(2)}$ and $\Sigmav_{(0)} = \pi_1\Sigmav_{(1)} + \pi_2\Sigmav_{(2)}$, respectively. 
This problem has been extensively studied in the literature; see, e.g., \citet{Kritchman2008,Leek2010,Passemier2014,jung2018number,aoshima2019}.

\begin{algorithm}
\caption{Second Maximal Data Piling (SMDP) algorithm (Type I)}
\begin{algorithmic}[1]
    \Require Original training data matrix of the $k$th class $\Xv_k$ for $k = 1, 2$.
    \Require The number of repetitions $K$, estimated $m_1$, $m_2$ and $m$
    \For{$j = 1, \ldots, K$}
        \State\label{eq:alg1a} Randomly split $\Xv_k$ into $\begin{cases} \Xv_{k, tr} = [X_{k1}, \ldots, X_{kn_{k,tr}}] \\ \Xv_{k, te} = [X_{k1}^*, \ldots, X_{kn_{k,te}}^*] \end{cases}$ so that $n_{k,te} > m_k$ ($k = 1, 2$)
        \State Set $n_{tr} = n_{1,tr} + n_{2,tr}$, $\bar{X}_{k,tr} = n_{k,tr}^{-1}\Xv_{k,tr}\1v_{n_{k,tr}}$ and $\bar{\Xv}_{k,tr} = \bar{X}_{k,tr}\1v_{n_{k,tr}}^\top$ ($k = 1, 2$)
        \State Set $\Sv_{tr} = \sum_{k=1}^{2}(\Xv_{k,tr} - \bar{\Xv}_{k,tr})(\Xv_{k,tr} - \bar{\Xv}_{k,tr})^\top$ and $\dv_{tr} = \bar{X}_{1,tr} - \bar{X}_{2,tr}$
        \State Write an eigen-decomposition of $\Sv_{tr}$ by 
        $\Sv_{tr} = \hat\Uv\hat\Lambdav\hat\Uv^\top = \hat\Uv_{1}\hat\Lambdav_{1}\hat\Uv_{1}^\top$
        where $\hat\Lambdav = \diag{(\hat\lambda_1, \ldots, \hat\lambda_{n_{tr}-2}, 0, \ldots, 0)}$ arranged in descending order, and $\hat\Uv = [\hat\Uv_1,~\hat\Uv_2]$ with $\hat\Uv_1 = [\hat\uv_1, \ldots, \hat\uv_{n_{tr}-2}]$, $\hat\Uv_2 = [\hat\uv_{n_{tr}-1}, \ldots, \hat\uv_p]$
        \State Set $\mdp = \|\hat\Uv_{2}\hat\Uv_{2}^\top \dv_{tr} \|^{-1}\hat\Uv_{2}\hat\Uv_{2}^\top \dv_{tr} $ and $\kappa_{\textup{MDP}} = p^{-1/2}\|\hat\Uv_{2}\hat\Uv_{2}^\top \dv_{tr} \|$
        \State Set $\Vv = [\hat\uv_{1}, \ldots, \hat\uv_{n_{tr} - 2}, \mdp]$
        \State Set $\bar{X}_{k,te} = n_{k,te}^{-1}\Xv_{k,te}\1v_{n_{k,te}}$ and $\bar{\Xv}_{k,te} = \bar{X}_{k,te}\1v_{n_{k,te}}^\top$ ($k = 1, 2$)
        \State Set $\Sv_{te} = \sum_{k=1}^{2}(\Xv_{k,te} - \bar{\Xv}_{k,te})(\Xv_{k,te} - \bar{\Xv}_{k,te})^\top$
        \State Write an eigen-decomposition of $p^{-1}\Vv^\top \Sv_{te}\Vv$ by $p^{-1}\Vv^\top \Sv_{te}\Vv = \hat\Qv\Hv\hat\Qv^\top$ where $\Hv = \diag{(h_1, \ldots, h_{n_{tr}-1})}$ arranged in descending order, and $\hat\Qv = [\hat\Qv_1,~\hat\Qv_2]$ with $\hat\Qv_1 = [\hat\qv_1, \ldots, \hat\qv_m]$ and $\hat\Qv_2 = [\hat\qv_{m+1}, \ldots, \hat\qv_{n_{tr}-1}]$
        \State Set $\av_{j,\textup{SMDP}} =  \| \hat\Qv_2\hat\Qv_2^\top \ev_{\textup{MDP}}  \|^{-1} \hat\Qv_2\hat\Qv_2^\top \ev_{\textup{MDP}}$ where $\ev_{\textup{MDP}} = (\0v_{n-2}^\top, 1)^\top$
        \State\label{eq:alg1b} Set $\wv_{j,\textup{SMDP}} = \Vv \av_{j,\textup{SMDP}}$
        \State Set $\bar{X}_{j, \textup{SMDP}} = n_{tr}^{-1}(n_{1,tr}\bar{X}_{1,tr} + n_{2,tr}\bar{X}_{2,tr})$
        \State\label{eq:alg1c} Set $\hat\alpha_k = -p^{-1}\sum_{i=m_k+1}^{n_{k,tr}-1}\hat\lambda_{(k),i}$ where $\hat\lambda_{(k),i}$ is the $i$th largest eigenvalue of $$\Sv_{k,tr} = (\Xv_{k,tr} - \bar{\Xv}_{k,tr})(\Xv_{k,tr} - \bar{\Xv}_{k,tr})^\top$$
        \State\label{eq:alg1d} Set $g_{j,\textup{SMDP}} = (n_{tr}\kappa_{\textup{MDP}})^{-1} (\ev_{\textup{MDP}}^\top \av_{j,\textup{SMDP}})(\hat\alpha_1 - \hat\alpha_2)$
    \EndFor
    \State Set $\smdp = K^{-1} \sum_{j=1}^{K} {\wv}_{j, \textup{SMDP}}$
    \State Set $\bar{X}_{\textup{SMDP}} = K^{-1} \sum_{j=1}^{K} \wv_{j, \textup{SMDP}}^\top\bar{X}_{j, \textup{SMDP}}$
    \State Set $g_{\textup{SMDP}} = K^{-1} \sum_{j=1}^{K} g_{j, \textup{SMDP}}$
    \State Use the following classification rule:
    \begin{align}\label{eq:SMDP-I-rule}
        \phi_{\textup{SMDP-I}}(Y; \Xc) = \begin{cases}
            1, & p^{-1/2}(\smdp^\top Y-\bar{X}_{\textup{SMDP}}) - g_{\textup{SMDP}} \ge 0, \\
            2, & p^{-1/2}(\smdp^\top Y-\bar{X}_{\textup{SMDP}}) - g_{\textup{SMDP}} < 0.
        \end{cases}
    \end{align}
\end{algorithmic}
\label{alg:1}
\end{algorithm}

\begin{algorithm}
\caption{Second Maximal Data Piling (SMDP) algorithm (Type II)}
\begin{algorithmic}[1]
    \Require Original training data matrix of the $k$th class $\Xv_k$ for $k = 1, 2$.
    \Require The number of repetitions $K$, estimated $m$
    \For{$j = 1, \ldots, K$}
        \State Do lines \ref{eq:alg1a}--\ref{eq:alg1b} in Algorithm~\ref{alg:1}
        \State Apply Linear Discriminant Analysis to $p^{-1/2}\wv_{j,\textup{SMDP}}^\top\Xv_{te}$ where $\Xv_{te} = [\Xv_{1, te},~\Xv_{2, te}]$ and achieve a classification threshold $b_j$
    \EndFor
    
    \State Set $\smdp = K^{-1} \sum_{j=1}^{K} \wv_{j, \textup{SMDP}}$
    \State Set $b_{\textup{SMDP}} = K^{-1} \sum_{j=1}^{K} b_j$
    \State Use the following classification rule:
    \begin{align}\label{eq:SMDP-II-rule}
        \phi_{\textup{SMDP-II}}(Y; \Xc) = \begin{cases}
            1, & p^{-1/2}\smdp^\top Y \ge b_{\textup{SMDP}}, \\
            2, & p^{-1/2}\smdp^\top Y < b_{\textup{SMDP}}.
        \end{cases}
    \end{align}
    
\end{algorithmic}
\label{alg:2}
\end{algorithm}

\section{Ridged Discriminant Directions and Second Maximal Data Piling}\label{sec:estimation ridge} 
In Section~\ref{sec:estimation of SMDP}, we confirmed that the second maximal data piling direction is the direction closest to $\mdp$ in the nullspace of leading eigenspace. This insight was leveraged to estimate $\smdp$ via a data-splitting approach in Algorithms~\ref{alg:1} and~\ref{alg:2}. On the other hand, under the homogeneous covariance model (i.e., $\Sigmav_{(1)} = \Sigmav_{(2)}$), \citet{Chang2021} revealed the close relationship between ridged linear discriminant vectors and the second maximal data piling direction. Formally, the ridged linear discriminant (direction) vector $\wv_{\alpha}$ is defined as 

\begin{equation}\label{eq:ridge}
    \begin{aligned}
    {\wv}_{\alpha} \varpropto \alpha_p(\Sv_W + \alpha_p\Iv_p)^{-1} \dv = \sum\limits_{i=1}^{n-2} \frac{\alpha_p}{\hat\lambda_i + \alpha_p} \hat\uv_i\hat\uv_i^\top\dv + \hat\Uv_2\hat\Uv_2^\top\dv,
    \end{aligned}
\end{equation}
satisfying $\|\wv_{\alpha} \|_2 = 1$, where $\alpha_p = \alpha p$ for a ridge parameter $\alpha \in \Real$. Note that $\mdp$ is a special case of $\wv_{\alpha}$ since $\lim_{\alpha \to 0} \wv_{\alpha} = \mdp$, that is, $\mdp$ is the \textit{ridgeless} minimum-norm estimator in the context of linear classification. \citet{Chang2021} showed that, if $\Sigmav_{(1)} = \Sigmav_{(2)}$, the ridged linear discriminant vector $\wv_{\alpha}$ concentrates toward the signal subspace $\Sc$ in high dimensions. In the next theorem, we observe that this result also holds under heterogeneous covariance assumptions.

\begin{theorem}\label{thm:ridge concentration}
Suppose Assumptions \ref{assume:1}---\ref{assume:5} hold. Let $\Sc = {\rm span}(\left\{\hat\uv_i \right\}_{i \in \Dc}, \mdp)$, where the index set $\Dc$ is specified in Table~\ref{table:D}. For any $\alpha \in \Real \setminus \left\{-\tau_1^2, -\tau_2^2 \right\}$ for which (\ref{eq:ridge}) is defined, ${\rm Angle}{(\wv_{\alpha}, \Sc)} \xrightarrow{P} 0 $
as $p \to \infty$.
\end{theorem}

Based on this, we consider removing noise terms in $\wv_{\alpha}$ and define a \textit{projected} ridged linear discriminant vector $\vv_{\alpha}$,
\begin{align}\label{eq:pridge}
    \vv_\alpha \varpropto \sum\limits_{i \in \Dc} \frac{\alpha_p}{\hat\lambda_i + \alpha_p} \hat\uv_i\hat\uv_i^\top \dv + \hat\Uv_2\hat\Uv_2^\top \dv
\end{align}
satisfying $\|\vv_{\alpha}\| = 1$. 
In case of $\Sigmav_{(1)} = \Sigmav_{(2)}$, \citet{Chang2021} showed that $\vv_{\alpha}$, for $\alpha = \hat\alpha$, is a second maximal data piling direction. Here,  $\hat\alpha$ is any HDLSS-consistent estimator of $-\tau^2$. In this special case, $\smdp$ in Algorithms~\ref{alg:1} and~\ref{alg:2} is essentially equivalent to the negatively ridged classification direction. A natural question is whether the projected ridged linear discriminant vector can also yield second maximal data piling under the generalized heterogeneous covariance model. Let $\Uc_{(k)} = {\rm span}(\Uv_{(k),1})$ be the leading eigenspace of the $k$th class, and recall that $\tau_k^2$ is the average of the non-leading eigenvalues of $\Sigmav_{(k)}$ for $k = 1, 2$. Our short answer is that:
\begin{center}
\begin{itemize}
    \item[(1)] if $\Uc_{(1)} \ne \Uc_{(2)}$ and $\tau_1 = \tau_2 =:\tau$, then the conclusion of \citet{Chang2021} can be extended to this case; $\vv_{\hat\alpha}$ is the second maximal data piling direction and the projected ridge classification rule $\phi_{\textup{PRD},\hat\alpha}$ based on $\vv_{\hat\alpha}$ achieves asymptotic perfect classification. We defer the details to Appendix~\ref{app:strong spikes with equal tails} in the supplemental materials.
    \item[(2)] if $\Uc_{(1)} = \Uc_{(2)}$ and $\tau_1 \ne \tau_2$, then $\vv_{\alpha}$ yields second data piling at two negative ridge parameters $\alpha = -\tau_1^2$ and $\alpha = -\tau_2^2$. However, neither of them is the second maximal data piling direction; see Section~\ref{sec:One-spike Model with Common Leading Eigenspace} and Appendix~\ref{app:strong spikes with unequal tails} in the supplemental materials.
    
    \item[(3)] if $\Uc_{(1)} \ne \Uc_{(2)}$ and $\tau_1 \ne \tau_2$, then $\vv_{\alpha}$ does not yield second data piling for any ridge parameter $\alpha \in \Real$; see Section~\ref{sec:General One-spike Model with Heterogeneous Leading Eigenspace}.
\end{itemize}
\end{center}

Here, the case $\Uc_{(1)} \ne \Uc_{(2)}$ encompasses various scenarios of heterogeneous covariance models (e.g., different numbers of spikes, varying leading eigenvalues and eigenvectors). The above results highlight that the difficulty in high-dimensional binary classification arises more from differences in the tail eigenvalues (i.e., $\tau_1^2$ and $\tau_2^2$) than from differences in the leading eigenstructures (i.e., $\Uc_{(1)}$ and $\Uc_{(2)}$). As discussed in Section~\ref{sec:test data piling}, the asymptotic behavior of the eigenvectors of $\Sv_W$ are quite different depending on the tail eigenvalues, which in turn affects the behavior of the ridge classification directions. Nevertheless, we remark that Algorithms~\ref{alg:1} and~\ref{alg:2} in Section~\ref{sec:estimation of SMDP} yield a second maximal data piling direction in all the cases we have discussed. 

In the remainder of this section, we provide the details for the unequal tail eigenvalues case with $\tau_1 > \tau_2$.
For clearer presentation, we investigate 
two cases of simple one-spike models (with $m_1 = 1$ and $m_2 = 1$).

\subsection{One-spike Model with Common Leading Eigenspace 
}\label{sec:One-spike Model with Common Leading Eigenspace}
Recall the motivating example at the beginning of Section~\ref{sec:SMDP theory} where we assumed $\uv_{(1),1} = \uv_{(2),1} = \uv_1$ but $\tau_1 > \tau_2$. In this case, the signal subspace $\Sc$ was ${\rm span}(\hat\uv_1, \hat\uv_{n_1},\mdp)$ and $\dim\Sc - m = 2$. We have seen that there may exist two second data piling directions that are orthogonal to each other and to $\uv_{1,\Sc}$. 
%
A natural question is whether the projected ridged linear discriminant vector $\vv_{\alpha}$ yields second data piling for some $\alpha \in \Real$. Proposition~\ref{prop:ridge one-spike unequal tails m = 1} below shows that both $\vv_{\hat\alpha_1}$ and $\vv_{\hat\alpha_2}$ are asymptotically orthogonal to $\uv_{1}$. 
Also, the two piles of $P_{\vv_{\hat\alpha_k}}\Yc_1$ and $P_{\vv_{\hat\alpha_k}}\Yc_2$ are apart from each other. 

\begin{proposition}\label{prop:ridge one-spike unequal tails m = 1}
Suppose Assumptions~\ref{assume:1}---\ref{assume:5} hold and assume $\beta_1 = \beta_2 = 1$, $\tau_1 > \tau_2$ and $m_1 = m_2 = m = 1$. Then (i) for $\hat\alpha_k$ chosen as an HDLSS-consistent estimator of $-\tau_k^2$, ${\rm Angle}{(\vv_{\hat\alpha_k}, \uv_{1, \Sc})} \xrightarrow{P} \pi/2$ as $p \to \infty$ for $k = 1, 2$. (ii) Also, ${\rm Angle}{(\vv_{\alpha}, \uv_{1, \Sc})} \xrightarrow{P} \pi/2$ as $p \to \infty$ if and only if $\alpha = -\tau_1^2$ or $\alpha = -\tau_2^2$. (iii) Moreover, for any independent observation $Y \in \Yc$ and $k = 1, 2$,
\begin{equation}\label{eq:ridge unequal tail eigenvalues projection m = 1}
    \begin{aligned}
        \frac{1}{\sqrt{p}} \vv_{\hat\alpha_k}^\top(Y-\bar{X}) \xrightarrow{P} \begin{cases} \gamma_{k}(\eta_2(1-\cos^2\varphi)\delta^2 - (\tau_1^2 - \tau_2^2) / n), & \pi(Y) = 1, \\ 
        \gamma_{k}(-\eta_1(1-\cos^2\varphi)\delta^2 - (\tau_1^2 - \tau_2^2) / n), & \pi(Y) = 2 
        \end{cases}
    \end{aligned}
\end{equation}
as $p\to\infty$ where $\gamma_{k}$ $(k = 1, 2)$ is a strictly positive random variable depending on the true principal component scores of $\Xc$.
\end{proposition}

We note that second data piling occurs only at the negative ridge parameters $\alpha = -\tau_1^2$ or $\alpha = -\tau_2^2$ (or at their HDLSS-consistent estimates).

\begin{remark}
From (\ref{eq:ridge unequal tail eigenvalues projection m = 1}), if $\tau_1 \ne \tau_2$, we can check that the projected ridge classification rule $\phi_{\textup{PRD},\alpha}$ of \citet{Chang2021} (see Appendix~\ref{app:strong spikes with equal tails} of the supplemental materials) may fail to achieve perfect classification due to the bias term. However, we can still achieve perfect classification using $\vv_{\alpha}$ with a bias-correction strategy. This bias correction is similar to $\phi_{\textup{b-MDP}}$ in Appendix~\ref{app:weak spikes} of the supplemental materials used for non-spike case. The bias-corrected projected ridge classification rule $\phi_{\textup{b-PRD},\alpha}$ is given in Appendix~\ref{app:strong spikes with unequal tails} of the supplemental materials, and ensures asymptotic perfect classification under common leading eigenspace conditions. 
\end{remark}

In contrast to the equal-tail case discussed in Appendix~\ref{app:strong spikes with equal tails} of the supplemental materials, $\vv_{\alpha}$ achieves perfect classification with two negative ridge parameters. This naturally raises the question of which, between $\vv_{\hat\alpha_1}$ and $\vv_{\hat\alpha_2}$, results in a larger asymptotic distance between the two piles of independent test data. 

Writing $D(\wv)$ for the asymptotic distance between the two piles of independent test data projected on $\left\{\wv \right\} \in \Ac$, Proposition~\ref{prop:ridge one-spike unequal tails m = 1} establishes that
$D(\vv_{\hat\alpha_k}) = \gamma_{k} (1-\cos^2\varphi)\delta^2$ (for $k = 1,2$).
Although this distance is not intuitive to interpret, 
our next result demonstrates 
that neither $\vv_{\hat\alpha_1}$ nor $\vv_{\hat\alpha_2}$ necessarily gives a larger asymptotic distance of the two piles than the other.

\begin{theorem}\label{thm:compare two ridges}
Suppose Assumptions \ref{assume:1}---\ref{assume:5} hold and assume $\beta_1 = \beta_2 = 1$, $\tau_1 > \tau_2$ and $m_1 = m_2 = m = 1$. Then,
\begin{itemize}
    \item [(i)] $D(\vv_{\hat\alpha_1}) \leq D(\vv_{\hat\alpha_2}) \Leftrightarrow  \tau_1^{-2} \Phiv_1 \leq  \tau_2^{-2} \Phiv_2$ where $\Phiv_{1}$ (and $\Phiv_{2}$) are the sum of squared scores of  the first principal component of the first (and second, respectively) class [see the exact expressions of $\Phiv_1$ and $\Phiv_2$ in Appendix~\ref{app:asymptotic properties of sample covariance matrix} of the supplemental materials].

    \item [(ii)] If the distribution of $X | \pi(X) = k$ is Gaussian, then 
    for $F \sim F(n_1-1, n_2-1)$, 
    %
    $$\mathbb{P}\left( \tau_1^{-2} \Phiv_1 \leq \tau_2^{-2} \Phiv_2 \right) = \mathbb{P}\left(F \leq  \frac{(n_2-1)\tau_2^{-2}\sigma_{2,1}^2}{(n_1-1)\tau_1^{-2}\sigma_{1,1}^2} \right).$$ 
\end{itemize}
\end{theorem}

In Theorem~\ref{thm:compare two ridges}, 
$\tau_k^{-2}\sigma_{k,1}^2$ can be understood as a signal-to-noise ratio of the $k$th class.

Our next question is whether $\vv_{\hat\alpha_1}$ or $\vv_{\hat\alpha_2}$ can be a second maximal data piling direction or not. From Proposition~\ref{prop:ridge one-spike unequal tails m = 1}, we have 
$D(\vv_{\hat\alpha_k}) = \gamma_{k} (1-\cos^2\varphi)\delta^2$ for $k = 1, 2$. We compare them with the asymptotic test data piling distance $D(\fv_0) = \upsilon_0(1-\cos^2\varphi)\delta^2$ of the theoretical second maximal data piling direction $\fv_0$ in Theorem~\ref{thm:SMDP piling distance}. 

\begin{proposition}\label{prop:compare ridge and SMDP}
Suppose Assumptions \ref{assume:1}---\ref{assume:5} hold and assume $\beta_1 = \beta_2 = 1$, $\tau_1 > \tau_2$ and $m_1 = m_2 = m = 1$. Then, $\upsilon_0 > \max\{ \gamma_{1}, \gamma_{2}\}$ with probability $1$, where $\upsilon_0$ is in Theorem~\ref{thm:SMDP piling distance} and $\gamma_{1}, \gamma_2$ are in Proposition~\ref{prop:ridge one-spike unequal tails m = 1}. 
\end{proposition}

Proposition~\ref{prop:compare ridge and SMDP} implies that the piling gap of $\fv_0$ in Theorem~\ref{thm:SMDP piling distance} is larger than those from $\vv_{\hat\alpha_1}$ and $\vv_{\hat\alpha_2}$. Neither $\vv_{\hat\alpha_1}$ nor $\vv_{\hat\alpha_2}$ is a second maximal data piling direction; it can be seen that these classifiers are affected by the meaningless direction $\fv_1$ in Figure~\ref{fig:one_comp_m=1}.

\subsection{General One-spike Model with Heterogeneous Leading Eigenspace 
}\label{sec:General One-spike Model with Heterogeneous Leading Eigenspace}

We now consider the general case of heterogeneous leading eigenvectors. We revisit Example~\ref{ex:strong_ne_2} in Section~\ref{sec:test data piling examples}. In this case, $\uv_{(1),1} \ne \uv_{(2),1}$ and $m = 2$, and the signal subspace $\Sc$ was given at random (either $\Sc = {\rm span}(\hat\uv_1, \hat\uv_2, \mdp)$ or ${\rm span}(\hat\uv_1, \hat\uv_{n_1}, \mdp)$), depending on the true leading principal component scores.
Although a second data piling occurs for a direction orthogonal to the $2$-dimensional common leading eigenspace $P_{\Sc}\Uc := {\rm span}(\uv_{(1),1,\Sc}, \uv_{(2),1,\Sc})$, it is impossible to determine the random $\Sc$. This indeterminacy of $\Sc$ precludes the use of $\vv_\alpha$ (which is the ridge direction projected on $\Sc$). It turns out that even if we knew $\Sc$, there is no $\vv_{\alpha}$,  for any $\alpha \in \Real$, that yields second data piling.


\begin{proposition}\label{prop:ridge one-spike unequal tails m = 2}
Suppose Assumptions~\ref{assume:1}---\ref{assume:5} hold and assume $\beta_1 = \beta_2 = 1$, $\tau_1 > \tau_2$, $m_1 = m_2 = 1$ and $m = 2$. For any given training data $\Xc$ and $\hat\alpha_k$ chosen as an HDLSS-consistent estimator of $-\tau_k^2$, (i) for each $k = 1, 2$, ${\rm Angle}{(\vv_{\hat\alpha_k}, \uv_{(k),1, \Sc})} \xrightarrow{P} \pi/2$ as $p \to \infty$. (ii) However, $\left\{ \vv_{\alpha} \right\} \notin \Ac$ for any ridge parameter $\alpha \in \Real$.
\end{proposition}

Proposition~\ref{prop:ridge one-spike unequal tails m = 2} shows that $\vv_{\hat\alpha_k}$ is asymptotically orthogonal to $\uv_{(k),1}$, which is the leading eigenvector of the $k$th class, but not asymptotically orthogonal to both of $\uv_{(1),1}$ and $\uv_{(2),1}$. Thus, the projected ridge classifier does not necessarily provides perfect classification. 

We note that the conclusion of Proposition~\ref{prop:ridge one-spike unequal tails m = 2}  remains valid for general $m_k$-spike cases. 

\begin{remark} 
In the special case that the leading eigenspace of the one class includes that of the other class (that is, $\Uc = \Uc_{(k)}$), $\vv_{\alpha}$ can yield second data piling with the negative ridge parameter $-\tau_k^2$. Moreover, in such cases, the bias-corrected projected ridge classification rule $\phi_{\textup{b-PRD}, \alpha}$ (defined in Appendix~\ref{app:strong spikes with unequal tails} of the supplemental materials) can achieve asymptotic perfect classification. See Appendix~\ref{app:strong spikes with unequal tails} in the supplemental materials for further discussion.
\end{remark}

\section{Numerical Studies}\label{sec:numerical studies}
In Section~\ref{sec:simulation}, we numerically demonstrate that our proposed classification rules can achieve asymptotic perfect classification under various heterogeneous covariance models via a simulation study. We also show that these methods achieve competitive classification performance in classifying Olivetti faces data  in Section~\ref{sec:real data examples}.

\subsection{Simulation Study}\label{sec:simulation}
We investigate classification performances of the SMDP algorithms ($\phi_{\textup{SMDP-I}}$, $\phi_{\textup{SMDP-II}}$) proposed in Section~\ref{sec:computational algorithm} through a simulation study. The MDP classification rules ($\phi_{\textup{MDP}}$, $\phi_{\textup{b-MDP}}$) in Appendix~\ref{app:weak spikes}, the PRD classification rules ($\phi_{\textup{PRD},\alpha}$, $\phi_{\textup{b-PRD},\alpha}$) in Appendix~\ref{app:strong spikes with equal tails} and~\ref{app:strong spikes with unequal tails} are also considered. For comparison, we consider Distance-Based Discriminant Analysis (DBDA) by \citet{Aoshima2013}, Transformed Distance-Based Discriminant Analysis (T-DBDA) by \citet{aoshima2019} and the distance-based linear classifier using the data transformation procedure (DT) by \citet{ishii2020}, all of which were developed for the classification of HDLSS data with a spiked covariance model. In particular, T-DBDA and DT were proposed for strongly spiked eigenvalue models where $\beta_k \geq 1/2$ $(k = 1, 2)$, which are based on the idea of removing the leading eigenspace (but DT only considers the case of $m_1 = m_2 = 1$). We also compare our methods with Distance Weighted Discrimination (DWD) by \citet{marron2007} and Support Vector Machine (SVM) by \citet{Vapnik1995}, which are well-known to achieve successful classification performances in high-dimensional classification (see \citet{Qiao2009} and \citet{Huang2017}). We do not consider classifiers that require a sparse estimation of the discriminant direction vector, as they are known to perform poorly under a spiked covariance model. We also do not consider nonparametric classification methods, such as nearest neighbors or decision tree-based classifiers, because the small sample size together with the high dimensionality makes tuning practically meaningless.

We generate training data of size $n_1 = n_2 = 20$ from multivariate normal distribution with mean $\muv_{(k)}$ and covariance matrix $\Sigmav_{(k)}$ with $p = 10,000$. We assume $\muv_{(1)} = (\sqrt{8}\1v_{p/8}^\top, \0v_{7p/8}^\top)^\top$, $\muv_{(2)} = \0v_p$. Note that in this case $\delta^2 = 1$. The population covariance matrices $\Sigmav_{(1)}$ and $\Sigmav_{(2)}$ will be given differently in each setting as follows.

\begin{center}
\begin{tabular}{c|c|c|c|c}
    Setting & $\Sigmav_{(1)}$ & $ \Sigmav_{(2)} $ & $(\beta_1, \beta_2)$ & $(\tau_1^2, \tau_2^2)$ \\
    \hline
    I & \multirow{3}{*}{$\sum\limits_{i=1}^{2}\sigma_{1,i}^2\uv_{(1),i}\uv_{(1),i}^\top + \tau_1^2 \Iv_{p}$} & \multirow{5}{*}{$\sum\limits_{i=1}^{3}\sigma_{2,i}^2\uv_{(2),i}\uv_{(2),i}^\top + \tau_2^2 \Iv_{p}$} & $(1/2, 1/2)$ & $(30, 15)$ \\
    \cline{1-1}\cline{4-5}
    II &  &  & $(1, 1)$ & $(30, 30)$ \\
    \cline{1-1}\cline{4-5}
    III &  &  & $(1, 1)$ & $(30, 15)$ \\
    \cline{1-2}\cline{4-5}
    IV & \multirow{2}{*}{$\sum\limits_{i=1}^{3}\sigma_{1,i}^2\uv_{(1),i}\uv_{(1),i}^\top + \tau_1^2 \Iv_{p}$}  &  & $(1, 1)$ & $(30, 30)$ \\
    \cline{1-1}\cline{4-5}
    V &  &  & $(1, 1)$ & $(30, 15)$ \\
\end{tabular}
\end{center}
We set $(\sigma_{1,1}^2, \sigma_{1,2}^2, \sigma_{1,3}^2) = (5p^{\beta_1}, 3p^{\beta_1}, 2p^{\beta_1}), (\sigma_{2,1}^2, \sigma_{2,2}^2, \sigma_{2,3}^2) = (5p^{\beta_2}, 3p^{\beta_2}, 2p^{\beta_2})$, 
\begin{align*}
[\uv_{(1),1}, \uv_{(1),2}, \uv_{(1),3}] = \frac{1}{\sqrt{p}}\begin{bmatrix} \sqrt{2} \1v_{p/4} & \0v_{p/4} & \1v_{p/4} \\ \sqrt{2} \1v_{p/4} & \0v_{p/4} & -\1v_{p/4} \\ \0v_{p/4} & \sqrt{2} \1v_{p/4} & \1v_{p/4} \\ \0v_{p/4} & \sqrt{2} \1v_{p/4} & -\1v_{p/4}\end{bmatrix},
\end{align*}
and
\begin{align*}
    [\uv_{(2),1}, \uv_{(2),2}, \uv_{(2),3}] = \frac{1}{\sqrt{p}}\begin{bmatrix} \1v_{p/4} & \sqrt{2}\1v_{p/4} & \0v_{p/4} \\ \1v_{p/4} & \0v_{p/4} & \sqrt{2}\1v_{p/4} \\ \1v_{p/4} &  -\sqrt{2}\1v_{p/4} & \0v_{p/4} \\ \1v_{p/4} & \0v_{p/4} & -\sqrt{2}\1v_{p/4} \end{bmatrix}.
\end{align*} 
Setting I corresponds to the case of weak spikes, while the other settings correspond to the case of strong spikes. Also, Settings II and III assume $m = m_2$, that is, $\Uc = \Uc_{(2)}$, while Settings IV and V assume $m > \max{(m_1, m_2)}$. Finally, Settings II and IV assume equal tail eigenvalues, while Settings III and V assume unequal tail eigenvalues for $\Sigmav_{(1)}$ and $\Sigmav_{(2)}$.

Independent test data of size $n_1^* = n_2^* = 500$ are generated from the same model with the training data. For each classification rule, we evaluate three metrics for our simulation study. First, we estimate the classification accuracy using independent test data. Next, we calculate the degree of second data piling defined as
\begin{align}\label{eq:DegSDP}
    \text{DegSDP}(\wv) := \frac{\wv^\top \Sv_B^* \wv}{\wv^\top \Sv_W^* \wv} =  \frac{\sum_{k=1}^2 n_k^*\{\wv^\top(\bar{Y}_k - \bar{Y})\}^2}{\sum_{k=1}^2\sum_{j=1}^{n_k^*} \{\wv^\top (Y_{kj} - \bar{Y}_k)\}^2 }
\end{align}
where $\wv$ is the normal vector of the classification rule, $Y_{kj}$ is the $j$th observation of the test data for the $k$th class, $\bar{Y}_k = n_{k}^{* -1} \sum_{j=1}^{n_k^*}Y_{kj}$ and $\bar{Y} = (n_1^* + n_2^*)^{-1}\sum_{k=1}^{2}n_k^*\bar{Y}_k$. Theoretically, $\text{DegSDP}(\wv) \xrightarrow{P} \infty$ as $p \to \infty$ for any $\{\wv\} \in \Ac$. For all settings except Setting I, we evaluate ${\rm Angle}(\wv, \Uc)$, the angle between the normal vector $\wv$ of the classification rule and the common leading eigenspace $\Uc$. Recall that ${\rm Angle}(\wv, \Uc) \xrightarrow{P} \pi/2$ as $p \to \infty$ for any $\{\wv\} \in \Ac$.

To clearly check performances of each classification rule, we use the true numbers of $m_1$, $m_2$ and $m$ for $\phi_{\textup{SMDP-I}}$, $\phi_{\textup{SMDP-II}}$, $\phi_{\textup{PRD}, \hat\alpha}$, $\phi_{\textup{b-PRD}, \hat\alpha_1}$ and $\phi_{\textup{b-PRD}, \hat\alpha_2}$. Here, $\hat\alpha$, $\hat\alpha_1$ and $\hat\alpha_2$ are HDLSS-consistent estimators of $-\tau^2$ (when $\tau_1 = \tau_2 =:\tau$), $-\tau_1^2$ and $-\tau_2^2$, respectively, all of which can be estimated purely from training data; see Appendices~\ref{app:weak spikes} and~\ref{app:strong spikes with equal tails} in the supplemental materials. Similarly, we use the true number of strongly spiked eigenvalues for T-DBDA. Using $\phi_{\textup{b-PRD}, \hat\alpha_1}$ and $\phi_{\textup{b-PRD},\hat\alpha_2}$ requires correctly identifying $\Dc$, which is demanding when $\beta_1 = \beta_2 = 1$, $m > m_1$ and $\tau_1 > \tau_2$ as explained in Section~\ref{sec:test data piling signal subspace} and Appendix~\ref{app:asymptotic properties of sample covariance matrix} of the supplemental materials. In this case, we set $\Dc = \{1, \ldots, m_1+m_2, n_1, \ldots, n_1+m_2-1\}$, including all sample eigenvectors that can potentially capture the variability within $\Uc$ in the signal subspace $\Sc$. It results in adding $m_2$ noisy directions to $\vv_{\alpha}$, but their influence becomes negligible as $p$ increases when $\alpha := \hat\alpha_2$ is used. For $\phi_{\textup{SMDP-I}}$ and $\phi_{\textup{SMDP-II}}$, we set $n_{1,te} = n_{2,te} = 6$ so that $\Xc_{te}$ consists of $30\%$ of original training data $\Xc$ and set the number of repetitions as $K = 10$. 

\begin{table}
\caption{Averaged estimates of classification accuracy, DegSDP in (\ref{eq:DegSDP}), and ${\rm Angle}(\wv, \Uc)$ of each classification rule for each setting of the simulation study. Standard deviations are presented in parentheses.}
\centering
{\scriptsize
\begin{tabular}{cccccccccc}
\noalign{\smallskip}\noalign{\smallskip}

 & Metric & $\phi_{\textup{MDP}}$ &  $\phi_{\textup{PRD}, \hat\alpha}$ & $\phi_{\textup{b-PRD}, \hat\alpha_1}$ & $\phi_{\textup{b-PRD}, \hat\alpha_2}$ & $\phi_{\textup{SMDP-I}}$ & T-DBDA & DT & DWD  \\
\hline
\multirow{4}{*}{I} & \multirow{2}{*}{Accuracy} & 0.890  & 0.890 & \bf{0.999} & \bf{0.999} & \bf{0.999} & \bf{0.999} & \bf{0.999} & 0.893 \\
 & & (0.028) & (0.028) & (0.001) & (0.001) & (0.001) & (0.001) & (0.001) & (0.027)  \\
\cline{2-10}
& \multirow{2}{*}{DegSDP} & {12.819} & 12.819 & 12.819 & 12.819 & {12.574} & 12.535 & \bf{12.956} & 12.596 \\
 & & (1.443) & (1.443) & (1.443) & (1.443) & (1.442) & (1.446) & (1.487) & (1.445) \\
\hline
\multirow{6}{*}{II} & \multirow{2}{*}{Accuracy} & 0.846 & \bf{0.996} & \bf{0.996} & \bf{0.996} & 0.993 & 0.926 & 0.705 & 0.784 \\
 &  & (0.081) & (0.002) & (0.002) & (0.002) & (0.006) & (0.050)  & (0.093) & (0.094) \\ 
\cline{2-10}
& \multirow{2}{*}{DegSDP} & {1.454} & 7.473 & 7.469 & \bf{7.474} & {6.919}  & 2.843 & 0.412 & 0.878 \\
 & & (1.356) & (0.948) & (0.946) & (0.952) & (1.100) & (1.639) & (0.625) & (1.021)  \\ 
\cline{2-10}
& \multirow{2}{*}{${\rm Angle}(\wv, \Uc)$} & {83.124} & 89.390 & 89.388 & \bf{89.393} & {89.158}  & 86.291 & 72.567 & 79.894 \\
 & & (3.476)  & (0.301) & (0.299) & (0.304) & (0.500) & (1.708) & (8.394) & (4.997) \\ 
\hline
\multirow{6}{*}{III} & \multirow{2}{*}{Accuracy} & 0.712 & 0.602 & 0.876 & 0.998  & \bf{0.999} & 0.939 & 0.708 & 0.702 \\
 &  & (0.050) & (0.048) & (0.140)  & (0.009) & (0.001) & (0.049) & (0.092) & (0.047) \\ 
\cline{2-10}
& \multirow{2}{*}{DegSDP} & {2.642} & 6.428 & 3.029 & 11.966 & \bf{12.172} & 3.591 & 0.457 & 1.263 \\
 & & (2.414) & (4.013) & (3.392) & (2.178) & (1.747) & (2.354) & (0.952) & (1.568) \\ 
\cline{2-10}
& \multirow{2}{*}{${\rm Angle}(\wv, \Uc)$} & {84.662} & 86.947 & 82.721 & 89.370 & \bf{89.386} & 85.905 & 70.853 & 80.412 \\
 & & (2.725) & (2.418) & (5.849) & (0.594) & (0.306) & (1.943)  & (8.911) & (4.912) \\ 
\hline
\multirow{6}{*}{IV} & \multirow{2}{*}{Accuracy} & 0.786 & \bf{0.980} & \bf{0.980} & \bf{0.980} & 0.974 & 0.858 & 0.671 & 0.731 \\
 &  & (0.065) & (0.009) & (0.009) & (0.009) & (0.015) & (0.060) & (0.063) & (0.071) \\ 
\cline{2-10}
& \multirow{2}{*}{DegSDP}& {0.684} & \bf{4.550} & \bf{4.550} & 4.546 & {4.135} & 1.347 & 0.195 & 0.411 \\
 & & (0.504) & (0.705) & (0.705) & (0.707) & (0.843) & (0.684) & (0.169) & (0.365) \\ 
\cline{2-10}
& \multirow{2}{*}{${\rm Angle}(\wv, \Uc)$} & {80.471} & \bf{89.138} & \bf{89.138} & \bf{89.138} & {88.832} & 84.489 & 67.875 & 76.374 \\
 & & (3.484) & (0.410) & (0.412) & (0.410) & (0.691) & (2.140) & (7.128) & (4.837) \\ 
\hline
\multirow{6}{*}{V} & \multirow{2}{*}{Accuracy} & 0.688 & 0.578 & 0.833 & 0.947 & \bf{0.992} & 0.872 & 0.672 & 0.680 \\
 & & (0.041)  & (0.050) & (0.155) & (0.046)  & (0.010) & (0.065) & (0.064) & (0.035) \\ 
\cline{2-10}
& \multirow{2}{*}{DegSDP} & {1.003} & 3.071 & 1.826 & 3.956 & \bf{7.113} & 1.598 & 0.189 & 0.492 \\
 & & (0.709) & (1.982) & (2.092)  & (2.175) & (1.441) & (0.940) & (0.156) & (0.397) \\ 
\cline{2-10}
& \multirow{2}{*}{${\rm Angle}(\wv, \Uc)$} & {81.201} & 85.802 & 82.646 & 86.065 & \bf{88.995} & 83.871 & 65.605 & 76.243 \\
 & & (3.417) & (2.353)  & (5.877) & (2.660) & (0.609) & (2.468) & (7.510) & (4.896) \\ 
\end{tabular}
}
\label{table:simulation}
\end{table}

We repeat this procedure $100$ times and report the average of each metric in Table~\ref{table:simulation}. We exclude results of SVM since the performances of $\phi_{\textup{MDP}}$ and SVM are almost similar to each other. Also, we exclude results of $\phi_{\textup{b-MDP}}$ since it is equivalent to $\phi_{\textup{b-PRD},\hat\alpha_1}$ and $\phi_{\textup{b-PRD},\hat\alpha_2}$ in Setting I, and the performances of $\phi_{\textup{b-PRD},\hat\alpha_1}$ and $\phi_{\textup{b-PRD},\hat\alpha_2}$ are uniformly better than that of $\phi_{\textup{b-MDP}}$ in the other settings. Moreover, we exclude results of $\phi_{\textup{SMDP-II}}$ since the performances of $\phi_{\textup{SMDP-I}}$ and $\phi_{\textup{SMDP-II}}$ are almost similar to each other. 

The simulation results in Table~\ref{table:simulation} are consistent with the theoretical findings in this paper. Our results show that both $\phi_{\textup{SMDP-I}}$ and $\phi_{\textup{SMDP-II}}$ achieve successful classification performances in all of the settings. Note that in Setting V, only $\phi_{\textup{SMDP-I}}$ and $\phi_{\textup{SMDP-II}}$ achieve nearly perfect classification among the classification rules. Also, $\smdp$ yields a significant DegSDP and is nearly orthogonal to $\Uc$ in all of the settings. These results confirm that our approach, projecting $\mdp$ onto the nullspace of the common leading eigenspace, successfully works under various heterogeneous covariance models. Note that we also exclude results of DBDA, since T-DBDA shows better performances than these classification rules in all settings. It implies that the data transformation technique, which removes excessive variability within the leading eigenspace, contributes to achieve better classification performances. 

Our results also reveal the conditions under which the MDP classification rules or the PRD classification rules achieve asymptotic perfect classification (as summarized in Table~\ref{table:summary}). For the case of weak spikes (Setting I), asymptotic perfect classification is possible by utilizing $\mdp$ with the bias-correction strategy. For the case of strong spikes with equal tail eigenvalues (Settings II and IV), all PRD classification rules (as well as SMDP algorithms) perform significantly better than the other classification rules. Moreover, the normal vector of each of these classification yields considerable DegSDP and is nearly orthogonal to $\Uc$. In contrast, for the case of strong spikes with unequal tail eigenvalues (Settings III and V), performances of PRD classification rules depend on the structure of leading eigenspaces. To be specific, in Setting III where $m = m_2 > m_1$, $\phi_{\textup{b-PRD}, \hat\alpha_2}$ achieves nearly perfect classification while $\phi_{\textup{PRD}, \hat\alpha}$ and $\phi_{\textup{b-PRD}, \hat\alpha_1}$ do not (see Appendix~\ref{app:strong spikes with unequal tails} in the supplemental materials for a detailed discussion). However, in Setting V where $m > \max{(m_1, m_2)}$, all PRD classification rules do not achieve nearly perfect classification. 

\subsection{Olivetti Faces Data Example}\label{sec:real data examples}
We use the Olivetti faces dataset (available at \url{http://cs.nyu.edu/~roweis/data.html}), which contains ten face images for each of 40 distinct subjects ($n = 400$) in $p = 64 \times 64 = 4096$ pixels. The pixel intensity values are represented as integers ranging from $0$ to $255$, which we normalize to the interval $[-1, 1]$. We focus on the binary classification problem of distinguishing between individuals wearing glasses and those not wearing glasses. There are $119$ images of individuals wearing glasses ($n_1 = 119$) and $281$ images of individuals not wearing glasses ($n_2 = 281$). 

\begin{figure}
    \centering
    \includegraphics[width=1\linewidth]{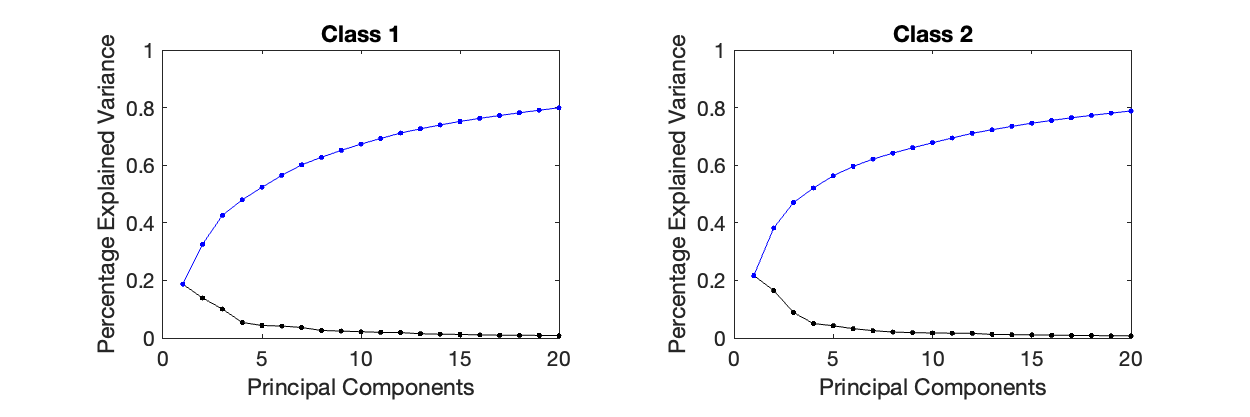}
    \caption{The proportion (black line) and cumulative proportion (blue line) of the total variance explained by each principal component for Class 1 (wearing glasses) and Class 2 (not wearing glasses) in the Olivetti dataset.}
    \label{fig:olivetti screeplot}
\end{figure}

Figure~\ref{fig:olivetti screeplot} shows the scree plots of the eigenvalues for the data from each class, suggesting that it is reasonable to estimate the number of leading eigenvalues for each class as $m_1 = 3$ and $m_2 = 3$. For T-DBDA, we estimate the number of strongly spiked eigenvalues as described in \citet{aoshima2019}, which also yields $m_1 = 3$ and $m_2 = 3$. Note that, in this case, the dimension of the common leading eigenspace, $m$, is between $\max(m_1,m_2) = 3$ and $m_1+m_2 = 6$. Also, it can be checked that both classes have heterogeneous covariance structures. For $k = 1, 2$, let $\hat\Uv_{(k),1} = [\hat\uv_{(k),1}, \ldots, \hat\uv_{(k),m_k}]$ where $\hat\uv_{(k),i}$ is the $i$th eigenvector of the sample covariance matrix of the $k$th class and let $\hat\Uc_{(k)} = {\rm span}(\hat\Uv_{(k),1})$. We find that the largest canonical angle between $\hat\Uc_{(1)}$ and $\hat\Uc_{(2)}$ is $49.198^{\circ}$.   

We randomly split the data into training and test sets by 4:1. This experiment was repeated $100$ times to calculate the average test error rates and $\textup{DegSDP}$, which are presented in Tables~\ref{table:olivetti error} and~\ref{table:olivetti SDPdeg}, respectively. Our results indicate that $\phi_{\textup{SMDP-II}}$ exhibits the lowest classification error rates for all choices of $m = 3, 4, 5, 6$. Moreover, the normal vector $\smdp$ of $\phi_{\textup{SMDP-I}}$ and $\phi_{\textup{SMDP-II}}$ yields the maximum DegSDP among the classification rules, which implies that the SMDP algorithms effectively identify a direction that yields second maximal data piling. Note that $\phi_{\textup{SMDP-I}}$ also achieves competitive classification performance compared to other classification rules. However, $\phi_{\textup{PRD},\hat\alpha}$ exhibits slightly worse performance than $\phi_{\textup{SMDP-I}}$, $\phi_{\textup{SMDP-II}}$ and even $\phi_{\textup{MDP}}$ despite the fact that both classes have heterogeneous covariance structures. This highlights the impact of unequal tail eigenvalues of the covariance matrices on the second data piling phenomenon. Meanwhile, T-DBDA achieves the best classification performance among T-DBDA, DT, and DBDA, followed by DT and then DBDA. Note that T-DBDA removes the first three leading components, DT removes only the first leading component, and DBDA does not remove any leading components for each class. This result is consistent with our observation that removing the leading eigenspace is beneficial.

\begin{table}
\caption{Classification error rates of each classification rule applied to the binary classification of the Olivetti faces dataset. Standard deviations are presented in parentheses.}
\centering
{
\begin{tabular}{ccccccccccc}
\noalign{\smallskip}\noalign{\smallskip}
$m$ & $\phi_{\textup{MDP}}$ &  $\phi_{\textup{PRD}, \hat\alpha}$ & $\phi_{\textup{SMDP-I}}$ & $\phi_{\textup{SMDP-II}}$ & DBDA & T-DBDA & DT & DWD & SVM \\
\hline
\multirow{2}{*}{$3$} & & 0.057 & 0.040 & \bf{0.016} & & & & & \\
 &  & (0.023) & (0.019) & (0.014) & & & & & \\
\cline{1-1}\cline{3-5}
\multirow{2}{*}{$4$} & & 0.079 & 0.041 & \bf{0.016} & & & & & \\
 & 0.042 & (0.028) & (0.019) & (0.015) & 0.198 & 0.140 & 0.176 & 0.074 & 0.023 \\
\cline{1-1}\cline{3-5}
\multirow{2}{*}{$5$} & (0.020) & 0.072 & 0.041 & \bf{0.017} & (0.045) & (0.042) & (0.040) & (0.028) & (0.013) \\
 &  & (0.027) & (0.018) & (0.015) & & & & & \\
\cline{1-1}\cline{3-5}
\multirow{2}{*}{$6$} & & 0.068 & 0.041 & \bf{0.016} & & & & & \\
 &  & (0.027) & (0.018) & (0.014) & & & & & \\
\end{tabular}
}
\label{table:olivetti error}
\end{table}

\begin{table}
\caption{DegSDP in (\ref{eq:DegSDP}) of each classification rule applied to the binary classification of the Olivetti faces dataset. Standard deviations are presented in parentheses.}
\centering
{
\begin{tabular}{ccccccccccc}
\noalign{\smallskip}\noalign{\smallskip}
$m$ & $\phi_{\textup{MDP}}$ &  $\phi_{\textup{PRD}, \hat\alpha}$ & $\phi_{\textup{SMDP-I}}$ & $\phi_{\textup{SMDP-II}}$ & DBDA & T-DBDA & DT & DWD & SVM \\
\hline
\multirow{2}{*}{$3$} & & 3.492 & \multicolumn{2}{c}{\bf{5.377}} & & & & & \\
 &  & (0.930) & \multicolumn{2}{c}{(1.409)} & & & & & \\
\cline{1-1}\cline{3-5}
\multirow{2}{*}{$4$} & & 2.504 & \multicolumn{2}{c}{\bf{5.326}} & & & & & \\
 & 5.048 & (0.734) & \multicolumn{2}{c}{(1.469)} & 0.377 & 1.133 & 0.819 & 1.419 & 3.013 \\
\cline{1-1}\cline{3-5}
\multirow{2}{*}{$5$} & (1.323) & 2.656 & \multicolumn{2}{c}{\bf{5.316}} & (0.169) & (0.262) & (0.287) & (0.468) & (0.584) \\
 &  & (0.770) & \multicolumn{2}{c}{(1.476)} & & & & & \\
\cline{1-1}\cline{3-5}
\multirow{2}{*}{$6$} & & 2.761 & \multicolumn{2}{c}{\bf{5.316}} & & & & & \\
 &  & (0.815) & \multicolumn{2}{c}{(1.471)} & & & & & \\
\end{tabular}
}
\label{table:olivetti SDPdeg}
\end{table}


\begin{funding}
This work was supported by National Research Foundation of Korea (NRF) grants funded by the Korea government (MSIT) (No.  
      2021R1A2C1093526, 
      RS-2022-NR068758, 
      RS-2023-00218231, 
      RS-2023-00301976, 
      RS-2024-00333399).  
\end{funding}

\begin{supplement}
\stitle{Supplemental materials for the paper ``Optimal Test-Data Piling in HDLSS Classification with Covariance Heterogeneity''.}
\sdescription{
The supplement contains technical details, case-by-case discussions of the second data piling phenomenon, and proofs of the main theorems.}
\end{supplement}


\bibliographystyle{imsart-nameyear} 
\bibliography{bib}       

\newpage
The supplemental materials are organized as follows. Additional technical assumptions are given in Appendix~\ref{app:assumptions}. We provide asymptotic properties of high-dimensional sample within-scatter matrix $\Sv_W$ for the case of strong spikes (i.e., $\beta_1 = \beta_2 = 1$) in Appendix~\ref{app:asymptotic properties of sample covariance matrix}. In Appendices~\ref{app:weak spikes}--\ref{app:strong and weak spikes}, we provide case-by-case discussions of the second data piling phenomenon depending on $\beta_1, \beta_2 \in [0,1]$ (see the table below). The proofs of main lemmas and theorems are contained in Appendix~\ref{app:proof of main results}. 

\begin{center}
    \begin{tabular}[h]{cccc}
    \noalign{\smallskip}\noalign{\smallskip}
    {Setting}  & {$0 \leq \beta_1, \beta_2 < 1$}  & {$\beta_1 = \beta_2 = 1$}  & {$0 \leq \beta_2 < \beta_1 = 1$ or $0 \leq \beta_1 < \beta_2 = 1$} \\
    \hline
    $\tau_1 = \tau_2$ & \multirow{2}{*}{Appendix~\ref{app:weak spikes}}  & Appendix~\ref{app:strong spikes with equal tails} & \multirow{2}{*}{Appendix~\ref{app:strong and weak spikes}} \\
    \cline{1-1}\cline{3-3}
    $\tau_1 \ne \tau_2$ &  & Appendix~\ref{app:strong spikes with unequal tails} &  \\
    \end{tabular}
\end{center}
\begin{appendix}
\section{Additional Assumptions}\label{app:assumptions}
We introduce additional assumptions regarding the limiting angles between  leading eigenvectors and the population mean difference vector $\muv$. Assumption~\ref{assume:4} specifies limiting angles between leading eigenvectors of each class and $\muv$. Without loss of generality, we assume $\uv_{(k),i}^\top \muv \ge 0$ for all $k = 1, 2$ and $1 \leq i \leq m_k$.

\begin{Assumption}\label{assume:4}
For $\theta_{k,i}\in [0, \pi/2]$, ${\rm Angle}(\uv_{(k),i}, \muv)\to \theta_{k,i}$ as $p\to\infty$ for $1 \leq i \leq m_k$ and $k = 1, 2$. 
\end{Assumption}

We write an orthogonal basis of $\Uc$ as $\Uv_1 = [\uv_1, \ldots, \uv_m]$. Without loss of generality, we assume $\uv_{i}^\top \muv \ge 0$ for all $1 \leq i \leq m$. Note that there exist orthogonal matrices $\Rv_{k}^{(p)} \in \Real^{m \times m_k}$ satisfying $\Uv_{(k),1} = \Uv_{1}\Rv_{k}^{(p)}$ for $k = 1, 2$. The matrix $\Rv_{k}^{(p)}$ catches the angles between the $m_k$ leading eigenvectors in $\Uv_{(k),1}$ and the $m$ basis in $\Uv_{1}$. We assume the following.

\begin{Assumption}\label{assume:5}
For $\theta_{i}\in [0, \pi/2]$, ${\rm Angle}(\uv_{i}, \muv)\to \theta_{i}$ as $p\to\infty$ for $1 \leq i \leq m$. For an orthogonal matrix $\Rv_{k} \in\Real^{m \times m_{k}}$, $\Rv^{(p)}_{k}\to\Rv_{k}$ as $p\to\infty$ for $k = 1, 2$. Moreover, $\Rv = [\Rv_{1},~\Rv_{2}] \in \Real^{m \times (m_1+m_2)}$ is of rank $m$.
\end{Assumption}

Recall that $\varphi$ denotes the limiting angle between $\muv$ and $\Uc$. Then we have $\cos^2\varphi = \sum_{i=1}^{m} \cos^2\theta_{i}$. Similarly, let $\varphi_k$ denote the limiting angle between $\muv$ and $\Uc_{(k)} = {\rm span}(\Uv_{(k),1})$, which is the leading eigenspace of the $k$th class, for $k = 1, 2$. Then we have $\cos^2\varphi_1 = \sum_{i=1}^{m_1} \cos^2\theta_{1,i}$, $\cos^2\varphi_2 = \sum_{i=1}^{m_2} \cos^2\theta_{2,i}$. 

\section{Asymptotic Properties of High-dimensional Sample Within-scatter Matrix}\label{app:asymptotic properties of sample covariance matrix}
In this section, we investigate the asymptotic behavior of sample eigenvalues and eigenvectors of $\Sv_W$, which plays a critical role in characterizing the signal subspace $\Sc$ in Section~\ref{sec:test data piling signal subspace}. 

Assume that $\beta_1 = \beta_2 = 1$ and $m_1, m_2 \ge 1$. Note that the sphered data matrix of $\Xv_k = [X_{k1}, \ldots, X_{kn_k}]$ is 
$$\Zv_{k} = \Lambdav_{(k)}^{-1/2}\Uv_{(k)}^\top(\Xv_k - \muv_{(k)} \1v_{n_k}^\top) = [\zv_{k,1}, \ldots, \zv_{k,p}]^\top \in \Real^{p \times n_k}$$ 
for $k = 1, 2$. Then the elements of $\Zv_{k}$ are uncorrelated with each other, and have mean zero and unit variance. For each $k = 1, 2$, denote the $n_k \times m_{k}$ matrix of the leading $m_{k}$ principal component scores of the $k$th class as $\Wv_{k}=[\sigma_{k,1} \zv_{k,1}, \ldots, \sigma_{k,m_{k}} \zv_{k,m_{k}}] \in \Real^{n_k \times m_k}$. Also, denote the scaled covariance matrix of the principal component scores in the $m_k$ leading eigenvectors $\{\uv_{(k),i}\}_{i=1}^{m_k}$ of the $k$th class as 
\begin{align}\label{eq:covariance of pc scores}
    \Phiv_{k} = \Wv_{k}^\top\left(\Iv_{n_k} - \frac{1}{n_k}\Jv_{n_k}\right)\Wv_{k} \in \Real^{m_k \times m_k}
\end{align}
where $\Jv_{n_k}$ is the matrix of size $n_k \times n_k$ whose all entries are $1$. Let 
$$\Wv=[\Rv_{1}\Wv_{1}^\top, ~ \Rv_{2}\Wv_{2}^\top]^\top \in \Real^{n \times m}$$ and denote the scaled covariance matrix of the principal component scores in the $m$ common leading eigenvectors $\{\uv_i\}_{i=1}^{m}$ as
\begin{align}\label{eq:covariance of commom pc scores}
    \Phiv = \Wv^\top (\Iv_n - \Jv)\Wv \in \Real^{m \times m}
\end{align}
where $\Jv = \begin{pmatrix} \frac{1}{n_1}\Jv_{n_1} & \Ov_{n_1 \times n_2} \\ \Ov_{n_2 \times n_1} & \frac{1}{n_2}\Jv_{n_2} \end{pmatrix}$. Finally, let
\begin{align}\label{eq:perturbed covariance}
\Phiv_{\tau_1, \tau_2} = \begin{pmatrix} \Phiv_{1} + \tau_1^2\Iv_{m_1} & \Phiv_{1}^{1/2}\Rv_{1}^\top \Rv_{2} \Phiv_{2}^{1/2} \\ \Phiv_{2}^{1/2}\Rv_{2}^\top \Rv_{1}\Phiv_{1}^{1/2} & \Phiv_{2} + \tau_2^2 \Iv_{m_2} \end{pmatrix} \in \Real^{(m_1+m_2) \times (m_1+m_2)}.
\end{align}
Note that $\Phiv_{\tau_1, \tau_2}$ can be understood as a perturbed version of $\Phiv$ with noises $\tau_1^2$ and $\tau_2^2$. For any square matrix $\Mv \in \Real^{l \times l}$ ($l \in \mathbb{N}$), let $\phi_i(\Mv)$ and $v_i(\Mv)$ denote the $i$th largest eigenvalue of $\Mv$ and its corresponding eigenvector, respectively. If $\tau_1 = \tau_2 =: \tau$, then the eigenvalues of $\Phiv$ and $\Phiv_{\tau_1, \tau_2}$ are closely related in the sense that
\begin{align}\label{eq:relation between Phi and Phi2}
    \phi_i(\Phiv_{\tau_1, \tau_2}) = \begin{cases} \phi_i(\Phiv) + \tau^2, & 1 \leq i \leq m, \\ \tau^2, & m+1 \leq i \leq m_1+m_2. \end{cases}
\end{align}
However, (\ref{eq:relation between Phi and Phi2}) does not hold if $\tau_1 \ne \tau_2$. Both of $\Phiv$ and $\Phiv_{\tau_1, \tau_2}$ will play a critical role in explaining the asymptotic behavior of the eigenvalues and eigenvectors of $\Sv_W$, which is quite different depending on whether both covariance matrices have equal tail eigenvalues or unequal tail eigenvalues. 

Lemma~\ref{lem:asymptotic property of Sw equal tails} shows that if $\tau_1 = \tau_2$, then the first $m$ leading eigenvectors of $\Sv_W$ explain the variation within the common leading eigenspace $\Uc$. The other sample eigenvectors are asymptotically orthogonal to $\Uc$, which implies that these eigenvectors do not capture the variability within $\Uc$. Recall that $\hat\lambda_i$ and $\hat\uv_i$ are the $i$th largest eigenvalue and the corresponding eigenvector of $\Sv_W$, respectively. Also, $\Uc$ is the common leading eigenspace of both classes. 

\begin{lemma}\label{lem:asymptotic property of Sw equal tails}
Suppose Assumptions \ref{assume:1}---\ref{assume:5} hold. Also, assume $\beta_1 = \beta_2 = 1$, $\tau_1 = \tau_2 =: \tau$ and $m_1, m_2 \ge 1$. Then conditional to $\Wv_{1}$ and $\Wv_{2}$, the following hold as $p \to \infty$.
\begin{itemize}
    \item[(i)] 
    \begin{align*}
        p^{-1} \hat\lambda_i \xrightarrow{P} \begin{cases}
        \phi_i(\Phiv) + \tau^2, & 1 \leq i \leq m, \\
        \tau^2, & m+1 \leq i \leq n-2.
        \end{cases}
    \end{align*}
    \item[(ii)] 
    \begin{align*}
        \cos{\left({\rm Angle}{(\hat\uv_i, \Uc)}\right)} \xrightarrow{P} \begin{cases}
            C_i, & 1 \leq i \leq m, \\
            0, & m + 1 \leq i \leq n-2
        \end{cases}
    \end{align*}
    where 
    \begin{align}\label{eq:Ci}
        C_i = \sqrt{\frac{\phi_i(\Phiv)}{\phi_i(\Phiv) + \tau^2}} > 0.
    \end{align} 
\end{itemize}
\end{lemma}

In contrast to the case of $\tau_1 = \tau_2$, Lemma~\ref{lem:asymptotic property of Sw unequal tails} shows that if $\tau_1 > \tau_2$, then some non-leading eigenvectors of $\Sv_W$ may capture the variability within $\Uc$ instead of leading eigenvectors.

\begin{lemma}\label{lem:asymptotic property of Sw unequal tails}
Suppose Assumptions \ref{assume:1}---\ref{assume:5} hold. Also, assume $\beta_1 = \beta_2 = 1$ and $\tau_1 > \tau_2$ and $m_1, m_2 \ge 1$. Then conditional to $\Wv_{1}$ and $\Wv_{2}$, the following hold as $p \to \infty$.

\begin{itemize}
    \item[(i)] 
    \begin{align*}
        p^{-1} \hat\lambda_i \xrightarrow{P} \begin{cases}
        \phi_i(\Phiv_{\tau_1, \tau_2}),  & 1 \leq i \leq k_0, \\
        \tau_1^2, & k_0 + 1 \leq i \leq k_0 + (n_1 - m_1 - 1), \\
        \phi_{i-(n_1 - m_1 -1)}(\Phiv_{\tau_1, \tau_2}), & k_0 + (n_1 - m_1) \leq i \leq n_1 + m_2 - 1, \\
        \tau_2^2, & n_1 + m_2 \leq i \leq  n-2,
        \end{cases}
    \end{align*}
    where $k_0$ $(m_1 \leq k_0 \leq m_1+m_2)$ is the (random) integer satisfying $\phi_{k_0}(\Phiv_{\tau_1, \tau_2}) > \tau_1^2 \ge \phi_{k_0+1}(\Phiv_{\tau_1, \tau_2})$ where we use the convention of $\phi_{m_1+m_2+1}(\Phiv_{\tau_1, \tau_2}) = 0$. Furthermore, if $m = m_1$, then $k_0 = m_1$.
    
    \item[(ii)] 
    \begin{align*}
        \cos{\left({\rm Angle}{(\hat\uv_i, \Uc)} \right)} \xrightarrow{P} \begin{cases}
            D_i, & 1 \leq i \leq k_0, \\
            0, & k_0+1 \leq i \leq k_0 + (n_1-m_1-1), \\
            D_{i-(n_1-m_1-1)}, & k_0 + (n_1 - m_1) \leq i \leq n_1+m_2-1, \\ 
            0, & n_1+m_2 \leq i \leq n-2,
        \end{cases}
    \end{align*}
    where $k_0$ is defined in Lemma~\ref{lem:asymptotic property of Sw unequal tails} (i),
    \begin{align}\label{eq:Di}
        D_i = \sqrt{\frac{\|\sum_{k=1}^{2}\Rv_{k}\Phiv_{k}^{1/2}\tilde{v}_{ik}(\Phiv_{\tau_1, \tau_2}) \|_2^2}{\phi_i(\Phiv_{\tau_1, \tau_2})} } > 0
    \end{align}
    and $v_{i}(\Phiv_{\tau_1, \tau_2}) = (\tilde{v}_{i1}(\Phiv_{\tau_1, \tau_2})^\top, \tilde{v}_{i2}(\Phiv_{\tau_1, \tau_2})^\top)^\top$ with $\tilde{v}_{ik}(\Phiv_{\tau_1, \tau_2}) \in \Real^{m_k}$ for $k = 1, 2$.
\end{itemize}
\end{lemma}

\begin{remark}
If $\tau_1 = \tau_2 =: \tau$, then $D_i$ in (\ref{eq:Di}) becomes $C_i$ in (\ref{eq:Ci}) for $1 \leq i \leq m$, and $D_i = 0$ for $m+1 \leq i \leq m_1+m_2$.
\end{remark}

In Lemma~\ref{lem:asymptotic property of Sw unequal tails}, we have seen that, if $\tau_1 > \tau_2$ and $m = m_1$, then $k_0 = m_1$ with probability $1$. However, if $m > m_1$, then $k_0$ is a random number depending on true leading principal component scores $\Wv_{1}$ and $\Wv_{2}$.

\section{The Case of Weak Spikes and the Maximal Data Piling Classification Rule}\label{app:weak spikes}
In this section, we study the special case of weak spikes. Recall that asymptotic results under the weak spikes model (i.e., $\beta_{k} < 1$) are equivalent to those with $m_{k} = 0$ and $\beta_k = 1$ under Assumption~\ref{assume:2} for $k = 1, 2$ (see Remark~\ref{rmk:asymptotic regime}). Hence, we focus on the case of $m_1 = m_2 = 0$, which implies $m = 0$. 

In Theorem~\ref{thm:test data piling}, we have seen that projections of test data $\Yc_1$ and $\Yc_2$ onto $\mdp$ are asymptotically piled on two distinct points, respectively. To be specific, for any independent observation $Y \in \Yc$,
\begin{align}\label{eq:MDP projection}
    \frac{1}{\sqrt{p}} \mdp^\top(Y-\bar{X}) \xrightarrow{P} \begin{cases} \kappa^{-1}(\eta_2\delta^2 - (\tau_1^2 - \tau_2^2) / n), & \pi(Y) = 1, \\ 
    \kappa^{-1}(-\eta_1\delta^2 - (\tau_1^2 - \tau_2^2) / n), & \pi(Y) = 2
    \end{cases}
\end{align}
as $p\to\infty$ where $\kappa$ is the probability limit of $\kappa_{\textup{MDP}}$ defined in (\ref{eq:kMDP}), which implies that $\mdp$ also yields second data piling of independent test data. In fact, it can be shown that $\mdp$ is a second \emph{maximal} data piling direction. 
In this non-spike case, the signal subspace $\Sc$ is ${\rm span}(\mdp)$, and any direction $\wv \in \Sc_{\Xc} \setminus \Sc = {\rm span}(\{ \hat\uv_i \}_{i=1}^{n-2})$ yields asymptotically zero piling distance of independent test data. Hence, $\mdp$ gives an asymptotic maximal distance between two piles of independent test data among second data piling directions.

From (\ref{eq:MDP projection}), we can also check that the original maximal data piling classification rule \citep{Ahn2010},
\begin{align}\label{eq:MDPrule}
    \phi_\textup{MDP}(Y ; \Xc) = \begin{cases}
    1, & \mdp^\top(Y - \bar{X}) \ge 0, \\
    2, & \mdp^\top(Y - \bar{X}) < 0,
    \end{cases}
\end{align}
achieves perfect classification if $\tau_1 = \tau_2$ (even under heterogeneous covariance structures). However, if $\tau_1 \ne \tau_2$, then $\phi_{\textup{MDP}}$ may fail to achieve perfect classification, since the total mean threshold in $\phi_{\textup{MDP}}$ should be adjusted by the bias term in (\ref{eq:MDP projection}). We define the bias-corrected maximal data piling classification rule as
\begin{align}\label{eq:bMDPrule}
    \phi_\textup{b-MDP}(Y ; \Xc) = \begin{cases}
    1, & p^{-1/2} \mdp^\top(Y - \bar{X}) - (\hat\alpha_1 - \hat\alpha_2) / (n \kappa_{\textup{MDP}}) \ge 0, \\
    2, & p^{-1/2} \mdp^\top(Y - \bar{X}) - (\hat\alpha_1 - \hat\alpha_2) / (n \kappa_{\textup{MDP}}) < 0,
    \end{cases}
\end{align}
where $\hat\alpha_k$ is an HDLSS-consistent estimator of $-\tau_k^2$ for $k = 1, 2$. Note that $\hat\lambda_{(k),i} / p \xrightarrow{P} \tau_k^2$ as $p \to \infty$ for $m_k + 1 \leq i \leq n_k -1$ \citep{Jung2012a}, where $\hat\lambda_{(k),i}$ is the $i$th largest eigenvalue of $\Sv_k = (\Xv_k - \bar{\Xv}_k)(\Xv_k - \bar{\Xv}_k)^\top $. Based on this fact, from now on, we fix
\begin{align}\label{eq:hatalphak}
    \hat\alpha_k = -\frac{1}{n_k - m_k - 1}\sum_{i=m_k+1}^{n_k-1} \frac{\hat\lambda_{(k),i}}{p}
\end{align}
for $k = 1, 2$. Then the bias-corrected maximal data piling classification rule achieves asymptotic perfect classification even if $\tau_1 \ne \tau_2$. 

\section{The Case of Strong Spikes with Equal Tails and the Projected Ridge Classification Rule}\label{app:strong spikes with equal tails}
In this section, we discuss the case of strong spikes with equal tail eigenvalues (i.e., $m \ge 1$ and $\tau_1 = \tau_2 =: \tau$). Recall that, in this case, $\Sc = {\rm span}(\hat\uv_1, \ldots, \hat\uv_m, \mdp)$ and second data piling occurs when a direction $\wv \in \Sc$ is asymptotically orthogonal to the common leading eigenspace $\Uc$. Theorem~\ref{thm:ridge equal tail eigenvalues piling distance} confirms that the projected ridged linear discriminant vector $\vv_{\hat\alpha} \in \Sc$ is such a direction, even if two populations do not have a common covariance matrix but equal tail eigenvalues. Here, $\hat\alpha$ is an HDLSS-consistent estimator of $-\tau^2$. Moreover, both $P_{\vv_{\hat\alpha}}\Yc_1$ and $P_{\vv_{\hat\alpha}}\Yc_2$ asymptotically pile on two distinct points, respectively. 

\begin{theorem}\label{thm:ridge equal tail eigenvalues piling distance}
Suppose Assumptions~\ref{assume:1}---\ref{assume:5} hold and assume $\beta_1 = \beta_2 = 1$ and $\tau_1 = \tau_2 =: \tau$. Then (i) for $\hat\alpha$ chosen as an HDLSS-consistent estimator of $-\tau^2$, ${\rm Angle}{(\vv_{\hat\alpha}, \uv_{i, \Sc})} \xrightarrow{P} \pi/2$ as $p \to \infty$ for $1 \leq i \leq m$. (ii) Moreover, for any independent observation $Y \in \Yc$,
\begin{align}\label{eq:ridge equal tail eigenvalues projection}
    \frac{1}{\sqrt{p}} \vv_{\hat\alpha}^\top(Y-\bar{X}) \xrightarrow{P} \begin{cases} \upsilon_0(\eta_2(1-\cos^2\varphi)\delta^2), & \pi(Y) = 1, \\ 
    \upsilon_0(-\eta_1(1-\cos^2\varphi)\delta^2), & \pi(Y) = 2
\end{cases}
\end{align}
as $p\to\infty$ where $\upsilon_0$ is defined in (\ref{eq:app:upsilon0:equal tails}). Hence, $D(\vv_{\hat\alpha}) = D(\fv_0) = \upsilon_0(1-\cos^2\varphi)\delta^2$, where $\fv_0$ is the theoretical second maximal data piling direction in Theorem~\ref{thm:SMDP piling distance}.
\end{theorem}

It should be emphasized that the asymptotic test data piling distance of $\vv_{\hat\alpha}$ is the same as that of the theoretical second maximal data piling direction $\fv_0$ in Theorem~\ref{thm:SMDP piling distance}. Recall that in Section~\ref{sec:SMDP theory}, if $\beta_1 = \beta_2 = 1$ and $\tau_1 = \tau_2$, then $\fv_0$ is the unique direction within $\Sc$ that is orthogonal to the common leading eigenspace $\Uc$ for all $p$. It can be seen that $\fv_0$ is asymptotically equivalent to the projected ridged linear discriminant vector $\vv_{\hat\alpha} \in \Sc$. 

The above argument naturally leads to the conclusion that $\vv_{\hat\alpha}$ is a second maximal data piling direction. The following corollary then directly follows from Theorem~\ref{thm:SMDP characterization}, which states that any second data piling direction is asymptotically close to a linear combination of $\vv_{\hat\alpha}$ and $\left\{ \hat\uv_i \right\}_{i=m+1}^{n-2}$ (the sample eigenvectors  strongly-inconsistent with $\Uc$) in this case. Recall that $\Ac$ (or $\W_\Xc$) represents the collection of all sequences of second data piling directions (or all directions in sample space, respectively).

\begin{corollary}
Suppose Assumptions \ref{assume:1}---\ref{assume:5} hold and assume $\beta_1 = \beta_2 = 1$ and $\tau_1 = \tau_2$. Then,
\begin{itemize}
    \item[(i)] For any given $\left\{ \wv \right\} \in \Ac$, there exists a sequence $\left\{ \vv \right\} \in \Bc$ such that $\| \wv - \vv \| \xrightarrow{P} 0$ as $p \to \infty$, where $\Bc = \left\{\left\{ \vv \right\} \in \W_\Xc : \vv \in {\rm span}(\vv_{\hat\alpha}) \oplus {\rm span}(\left\{\hat\uv_i \right\}_{i=m+1}^{n-2}) \right\}$.
    
    \item[(ii)] For any $\left\{ \wv \right\} \in \Ac$ such that $D(\wv)$ exists, $\left\{ \wv \right\} \in \Ac$ is a sequence of second maximal data piling directions if and only if $\|\wv - \vv_{\hat\alpha}\| \xrightarrow{P} 0$ as $p \to \infty$. 
\end{itemize}
\end{corollary}

We note that $\vv_{\hat\alpha}$ can be obtained purely from the training data $\Xc$. Also, $\hat\alpha$ can be estimated purely from the sample eigenvalues of the pooled within-covariance matrix $\Sv_W$, since $\hat\lambda_i / p \xrightarrow{P} \tau^2$ for $m+1 \leq i \leq n-2$ as $p \to \infty$ (see Lemma~\ref{lem:asymptotic property of Sw equal tails}). Throughout, we use 
\begin{align}\label{eq:hatalpha}
    \hat\alpha = -\frac{1}{n-m-2} \sum_{i=m+1}^{n-2}\frac{\hat\lambda_i}{p}.
\end{align}
From (\ref{eq:ridge equal tail eigenvalues projection}), the original projected ridge classification rule $\phi_{\textup{PRD}, \alpha}(Y; \Xc)$ of \citet{Chang2021} for a given $\alpha$,
\begin{align}\label{eq:PRDrule}
\phi_{\textup{PRD}, \alpha}(Y; \Xc) = \begin{cases} 1, &  \vv_{\alpha}^\top(Y-\bar{X}) \ge 0, \\ 2, &\vv_{\alpha}^\top(Y-\bar{X}) < 0, \end{cases}
\end{align}
with $\Dc = \{1, \ldots, m \}$ also achieves perfect classification when $\alpha := \hat\alpha$, as long as the tail eigenvalues are equal. 
Thus, our result extends the conclusion of \citet{Chang2021} in the sense that $\phi_{\textup{PRD}, \hat\alpha}$ yields perfect classification not only in case of $\Sigmav_{(1)} = \Sigmav_{(2)}$ but also in case of $\Sigmav_{(1)} \ne \Sigmav_{(2)}$ and $\tau_1 = \tau_2$.

\begin{remark} 
It can be shown that $\phi_{\textup{PRD}, \alpha}$ achieves perfect classification only at a negative ridge parameter $\alpha := -\tau^2$ when $\tau_1 = \tau_2 =:\tau$ with a regularizing condition on the distribution of $(\zv_{k,1}, \ldots, \zv_{k, m_k})^\top$ for $k = 1, 2$ as in Theorem 3.7 of \citet{Chang2021}.
\end{remark}

\section{The Case of Strong Spikes with Unequal Tails and the Bias-corrected Projected Ridge Classification Rule}\label{app:strong spikes with unequal tails}
In this section, we focus on the case of strong spikes with unequal tail eigenvalues (i.e., $m \ge 1$ and $\tau_1 \ne \tau_2$). In this case, when $m_1 = m_2 = 1$, Propositions~\ref{prop:ridge one-spike unequal tails m = 1} and~\ref{prop:ridge one-spike unequal tails m = 2} showed that $\vv_{\hat\alpha_k}$ is asymptotically orthogonal to $\uv_{(k),1}$. These results can be extended to the general cases where $m_1 \ge 1$ and $m_2 \ge 1$. In general, Theorem~\ref{thm:ridge unequal tails asymptotically orthogonal} tells that $\vv_{\hat\alpha_k}$ is asymptotically orthogonal to $\Uc_{(k)} = {\rm span}([ \uv_{(k),1}, \ldots, \uv_{(k),m_k}])$, which is the leading eigenspace of the $k$th class. 

\begin{theorem}\label{thm:ridge unequal tails asymptotically orthogonal}
Suppose Assumptions~\ref{assume:1}---\ref{assume:5} hold and assume $\beta_1 = \beta_2 = 1$ and $\tau_1 > \tau_2$. Then for $\hat\alpha_k$ chosen as an HDLSS-consistent estimator of $-\tau_k^2$, ${\rm Angle}{(\vv_{\hat\alpha_k}, \uv_{(k),i, \Sc})} \xrightarrow{P} \pi/2$ as $p \to \infty$ for $k = 1, 2$ and $1 \leq i \leq m_k$. 
\end{theorem}

It implies that $P_{\vv_{\hat\alpha_k}} \Yc_k$ converges to a single point as $p$ increases for each $k = 1, 2$. Hence, in cases where $\Uc_{(1)}$ includes $\Uc_{(2)}$ (or $\Uc_{(2)}$ includes $\Uc_{(1)}$), both of $\Yc_1$ and $\Yc_2$ are piled on $\vv_{\hat\alpha_1}$ (or $\vv_{\hat\alpha_2}$), respectively. 

\begin{theorem}\label{thm:ridge unequal tails piling distance}
Suppose Assumptions \ref{assume:1}---\ref{assume:5} hold and assume $\beta_1 = \beta_2 = 1$, $\tau_1 > \tau_2$. Also, for $1 \leq k \ne s \leq 2$, further assume that $m = m_k \ge m_s$ (that is, $\Uc_{(k)}$ includes $\Uc_{(s)}$). Then for any independent observation $Y \in \Yc$,
    \begin{align*}
        \frac{1}{\sqrt{p}} \vv_{\hat\alpha_k}^{\top}(Y-\bar{X}) \xrightarrow{P} \begin{cases}     \gamma_{k}(\eta_2(1-\cos^2\varphi)\delta^2 - (\tau_1^2 - \tau_2^2) / n), & \pi(Y) = 1, \\ 
        \gamma_{k}(-\eta_1(1-\cos^2\varphi)\delta^2 - (\tau_1^2 - \tau_2^2) / n), & \pi(Y) = 2 
        \end{cases}
    \end{align*}
as $p\to\infty$ where $\gamma_{k}$ is a strictly positive random variable depending on the true principal component scores of $\Xc$.
\end{theorem}

Note that Theorem~\ref{thm:ridge unequal tails piling distance} is a generalized version of Proposition~\ref{prop:ridge one-spike unequal tails m = 1} (iii). For the case of strong spikes with unequal eigenvalues, when $m = m_k$ for a $k \in\{1,2\}$, Theorem~\ref{thm:ridge unequal tails piling distance} tells that projections of independent test data are asymptotically piled on two distinct points on $\vv_{\hat\alpha_k}$.
Even though $\vv_{\hat\alpha_1}$ and $\vv_{\hat\alpha_2}$ are not the second maximal data piling directions (as discussed in Section~\ref{sec:One-spike Model with Common Leading Eigenspace}), they still yield second data piling. Hence, a classification rule utilizing $\vv_{\hat\alpha_k}$ can also achieve asymptotic perfect classification in such cases. We define the bias-corrected projected ridge classification rule as
\begin{align}\label{eq:bPRDrule}
\phi_{\textup{b-PRD}, \alpha}(Y; \Xc) = \begin{cases}
1, & p^{-1/2}\vv_{\alpha}^\top (Y - \bar{X}) - (\hat\alpha_1 - \hat\alpha_2) / (n \|\tilde{\vv}_{\alpha} \|) \ge 0, \\
2, & p^{-1/2}\vv_{\alpha}^\top (Y - \bar{X}) - (\hat\alpha_1 - \hat\alpha_2) / (n \|\tilde{\vv}_{\alpha} \|) < 0,
\end{cases}
\end{align}
where $\tilde{\vv}_{\alpha}$ is given as
\begin{align*}
    \tilde{\vv}_{\alpha} = \sum\limits_{i\in\Dc} \frac{\alpha_p}{\hat\lambda_i + \alpha_p} \hat\uv_i\left( \frac{1}{\sqrt{p}}\hat\uv_i^\top\dv\right) + \frac{1}{\sqrt{p}}\hat\Uv_2\hat\Uv_2^\top\dv.
\end{align*}
Note that $\vv_{\alpha} \varpropto \tilde{\vv}_{\alpha}$. It can be shown that if $k \in\{1,2\}$ is such that $m= m_k$, then $\| \tilde{\vv}_{\hat\alpha_k} \| \xrightarrow{P} \gamma_{k}^{-1}$ as $p \to \infty$, 
and $\phi_{\textup{b-PRD}, \hat\alpha_k}$ achieves asymptotic perfect classification. 

In practice, $\phi_{\textup{b-PRD}, \alpha}$ requires identifying $\Dc$ and determining the signal subspace $\Sc$ onto which the ridged linear discriminant vector $\wv_{\alpha}$ is projected. In case of $m = m_1$, we have seen in Lemma~\ref{lem:asymptotic property of Sw equal tails} that $k_0 = m_1$ with probability $1$ and thus one can set $\Dc = \{1, \ldots, m_1, n_1, \ldots, n_1+m_2-1\}$. However, this becomes challenging when $m = m_2> m_1$ as explained in Section~\ref{sec:test data piling signal subspace} and Appendix~\ref{app:asymptotic properties of sample covariance matrix}. In this case, one possible approach to circumvent this difficulty is to set $\Dc = \{1, \ldots, m_1+m_2, n_1, \ldots, n_1+m_2-1 \}$, including all sample eigenvectors that may potentially capture the variability within the common leading eigenspace $\Uc$ in the signal subspace $\Sc$. Even if the resulting $\vv_{\alpha}$ is affected by $m_2$ additional noisy directions, their influence becomes negligible as $p$ increases when $\alpha := \hat\alpha_2$ (defined in (\ref{eq:hatalphak})) is used. Moreover, $\vv_{\hat\alpha_2}$ yields second data piling and $\phi_{\textup{b-PRD}, \hat\alpha_2}$ achieves asymptotic perfect classification. The simulation results of Setting III in Section~\ref{sec:simulation} numerically confirm the validity of this approach.

However, as described in Section~\ref{sec:General One-spike Model with Heterogeneous Leading Eigenspace}, $\vv_{\alpha}$ does not generally yield second data piling for any ridge parameter $\alpha \in \Real$ when $m > \max(m_1, m_2)$. Hence, $\phi_{\textup{b-PRD},\alpha}$ can only be used in special cases where $m = m_1$ or $m = m_2$.

\section{The Case of Strong and Weak Spikes}\label{app:strong and weak spikes}
In this section, we discuss the cases where either (i) $\beta_1 = 1$ and $\beta_2 < 1$ or (ii) $\beta_1 < 1$ and $\beta_2 = 1$. Recall that, in the main article, we assumed $\beta_1 = \beta_2 = 1$ and defined the common leading subspace $\Uc = {\rm span}([\Uv_{(1),1}, \Uv_{(2),1}])$. However, if (i) $\beta_1 = 1$ and $\beta_2 < 1$ (or (ii) $\beta_1 < 1$ and $\beta_2 = 1$), it is more natural to define $\Uc$ as the subspace spanned by leading eigenspace of the first (or second) class, respectively. That is, if (i) $\beta_1 = 1$ and $\beta_2 < 1$, we define $\Uc = \Uc_{(1)}$ and $m = m_1$. On the other hand, if (ii) $\beta_1 < 1$ and $\beta_2 = 1$, we define $\Uc = \Uc_{(2)}$ and $m = m_2$.

Lemmas~\ref{lem:asymptotic property of Sw with b1 = 1 and b2 < 1} and~\ref{lem:asymptotic property of Sw with b1 < 1 and b2 = 1} below show the asymptotic limits of the sample eigenvalues and eigenvectors of $\Sv_W$ under the assumption of (i) and (ii), respectively.  As in the main article, we assume $\tau_1 \ge \tau_2$. The proofs of these results are quite similar to those of Lemmas~\ref{lem:asymptotic property of Sw equal tails} and~\ref{lem:asymptotic property of Sw unequal tails} (in Appendix~\ref{app:proof of main results}), so we omit them.

\begin{lemma}\label{lem:asymptotic property of Sw with b1 = 1 and b2 < 1}
Suppose Assumptions \ref{assume:1}---\ref{assume:5} hold. Also, assume $\beta_1 = 1$, $0 \leq \beta_2 < 1$ and $\tau_1 \ge \tau_2$. Then conditional to $\Wv_{1}$ and $\Wv_{2}$, as $p \to \infty$,
\begin{align*}
p^{-1} \hat\lambda_i \xrightarrow{P} \begin{cases}
        \phi_i(\Phiv_{1}) + \tau_1^2, & 1 \leq i \leq  m_1, \\
        \tau_1^2, & m_1 + 1 \leq i \leq n_1-1, \\ 
        \tau_2^2, & n_1 \leq i \leq n-2
        \end{cases}
\end{align*}
and 
\begin{align*}
    \cos{\left(\ang{(\hat\uv_i, \Uc)}\right)} \xrightarrow{P} \begin{cases}
            A_i & 1 \leq i \leq m_1, \\
            0, & m_1 + 1 \leq i \leq n-2
        \end{cases}    
\end{align*}
where 
\begin{align*}
    A_i = \sqrt{\frac{\phi_i(\Phiv_1)}{\phi_i(\Phiv_1)+\tau_1^2}}.
\end{align*}
\end{lemma}

\begin{lemma}\label{lem:asymptotic property of Sw with b1 < 1 and b2 = 1}
Suppose Assumptions \ref{assume:1}---\ref{assume:5} hold. Also, assume $0 \leq \beta_1 < 1$, $\beta_2 = 1$ and $\tau_1 \ge \tau_2$. Then conditional to $\Wv_{1}$ and $\Wv_{2}$, as $p \to \infty$,

\begin{align*}
    p^{-1} \hat\lambda_i \xrightarrow{P} \begin{cases}
    \phi_i(\Phiv_{2}) + \tau_2^2, & 1 \leq i \leq k_0, \\
    \tau_1^2, & k_0 + 1 \leq i \leq  k_0 + (n_1 - 1), \\
    \phi_{i-(n_1-1)}(\Phiv_{2}) + \tau_2^2, & k_0 + n_1 \leq i \leq n_1 + m_2 - 1,\\
    \tau_2^2, & n_1 + m_2 \leq i \leq n-2,
    \end{cases}
\end{align*}
where $k_0$ $(0 \leq k_0 \leq m_2)$ is the (random) integer satisfying $\phi_{k_0}(\Phiv_{2}) + \tau_2^2 > \tau_1^2 \ge \phi_{k_0+1}(\Phiv_{2}) + \tau_2^2$ where we use the convention that $\phi_{0}(\Phiv_{2}) = \infty$ and $\phi_{m_2+1}(\Phiv_{2}) = 0$. Moreover,
\begin{align*}
    \cos{\left({\rm Angle}{(\hat\uv_i, \Uc)}\right)} \xrightarrow{P} \begin{cases}
        B_i, & 1 \leq i \leq k_0, \\
        0, & k_0 + 1 \leq i \leq k_0 + (n_1-1), \\
        B_{i-(n_1-1)}, & k_0+n_1 \leq i \leq n_1+m_2-1, \\        
        0, & n_1+m_2 \leq i \leq n-2,
    \end{cases}
\end{align*}
where 
\begin{align*}
    B_i = \sqrt{\frac{\phi_i(\Phiv_2)}{\phi_i(\Phiv_2)+\tau_2^2}}.
\end{align*}
\end{lemma}

\begin{table}
\centering
\begin{tabular}{cccc}
\noalign{\smallskip}\noalign{\smallskip}
$\beta_1, \beta_2$ & $\tau_1, \tau_2$ & {$\Dc$} & {$|\Dc|$}\\
\hline
 $\beta_1 = 1$ & \multirow{2}{*}{$\tau_1 \ge \tau_2$} & \multirow{2}{*}{$\left\{1, \ldots, m \right\}$} & \multirow{2}{*}{$m$} \\
$\beta_2 < 1$ & & & \\
\hline
 $\beta_1 < 1$ & \multirow{2}{*}{$\tau_1 \ge \tau_2$} & \multirow{2}{*}{$\left\{1, \ldots, k_0, k_0+n_1, \ldots, n_1+m-1  \right\}$} & \multirow{2}{*}{$m$} \\
$\beta_2 = 1$ & & & \\
\end{tabular}
\caption{The index set $\Dc$ for each case. $k_0$ is a random number depending on the true principal component scores of training data $\Xc$ defined in Lemma~\ref{lem:asymptotic property of Sw with b1 < 1 and b2 = 1}. }
\label{table:D2}
\end{table}

\begin{remark}
    In Lemma~\ref{lem:asymptotic property of Sw with b1 < 1 and b2 = 1}, if $\tau_1 = \tau_2$, then $k_0 = m = m_2$ with probability $1$. However, if $\tau_1 > \tau_2$, then $k_0$ is a random number depending on the true leading principal component scores of $\Wv_2$.
\end{remark}

If (i) $\beta_1 = 1$ and $0 \leq \beta_2 < 1$, then the leading $m=m_1$ sample eigenvectors can always explain the variation within $\Uc = \Uc_{(1)}$ in the data from the first class, while the other sample eigenvectors are strongly inconsistent with $\Uc = \Uc_{(1)}$. 

If (ii) $0 \leq \beta_1 < 1$ and $\beta_2 = 1$, then $m=m_2$ sample eigenvectors explain the variation within $\Uc = \Uc_{(2)}$ in the second class. In particular, if $\tau_1 = \tau_2$, then the first $m = m_2$ sample eigenvectors account for this variation. However, if $\tau_1 > \tau_2$, some non-leading sample eigenvectors may capture the variability instead of some of the leading $m_2$ sample eigenvectors. For example, if $m_2 = 1$ and $\tau_1 = \tau_2$, then $k_0 = 1$ with probability $1$ and $\hat\uv_1$ always accounts for the variation along $\Uc = {\rm span}(\uv_{(2),1})$. However, if $m_2 = 1$ and $\tau_1 > \tau_2$, then $k_0$ is either $0$ or $1$, which means that either $\hat\uv_1$ or $\hat\uv_{n_1}$ can capture the variability along $\Uc = {\rm span}(\uv_{(2),1})$.

Let $\Sc = {\rm span}(\left\{\hat\uv_i \right\}_{i \in \Dc}, \mdp)$ with $\Dc$ be given in Table~\ref{table:D2} for each case. Then it can be shown that Theorem~\ref{thm:test data piling} also holds for the case of (i) $\beta_1 = 1, \beta_2 < 1$ or (ii) $\beta_1 < 1, \beta_2 = 1$, that is, projections of $\Yc$ onto $\Sc$ are distributed along parallel affine subspaces, one for each class. More precisely, projections of $\Yc_k$ onto $\Sc$ tend to lie on an $m$-dimensional affine subspace $\Lc_k$ if $\beta_k = 1$, or a point in $\Lc_k$ if $\beta_k < 1$. See Figure~\ref{fig:app:SDP} for an illustration.

\begin{figure}
    \centering
    \subfloat[$\beta_1 = 1, 0 \leq \beta_2 < 1$]{{\includegraphics[width=0.5\linewidth]{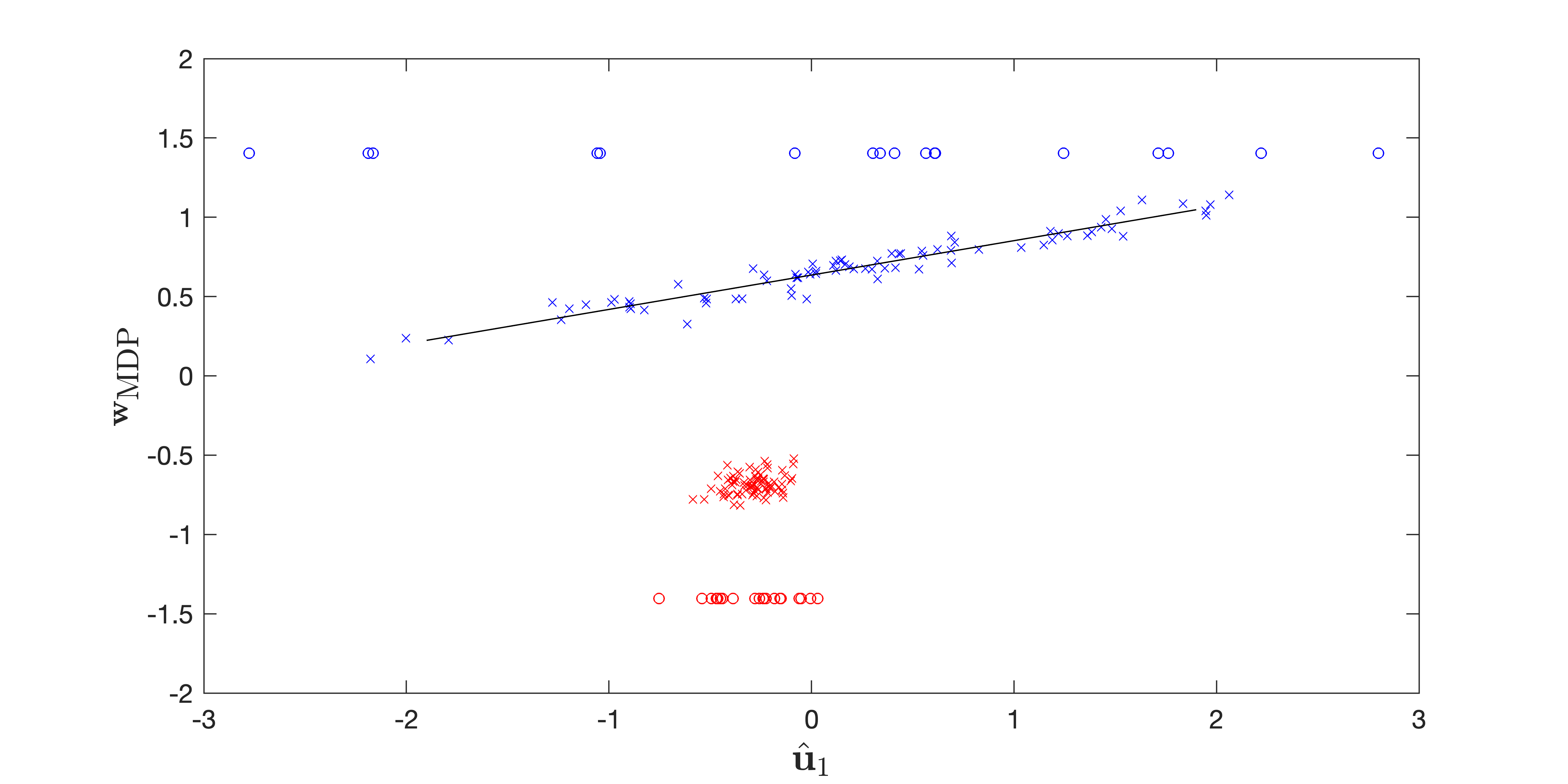} }}%
    \subfloat[$0 \leq \beta_1 < 1, \beta_2 = 1$]{{\includegraphics[width=0.5\linewidth]{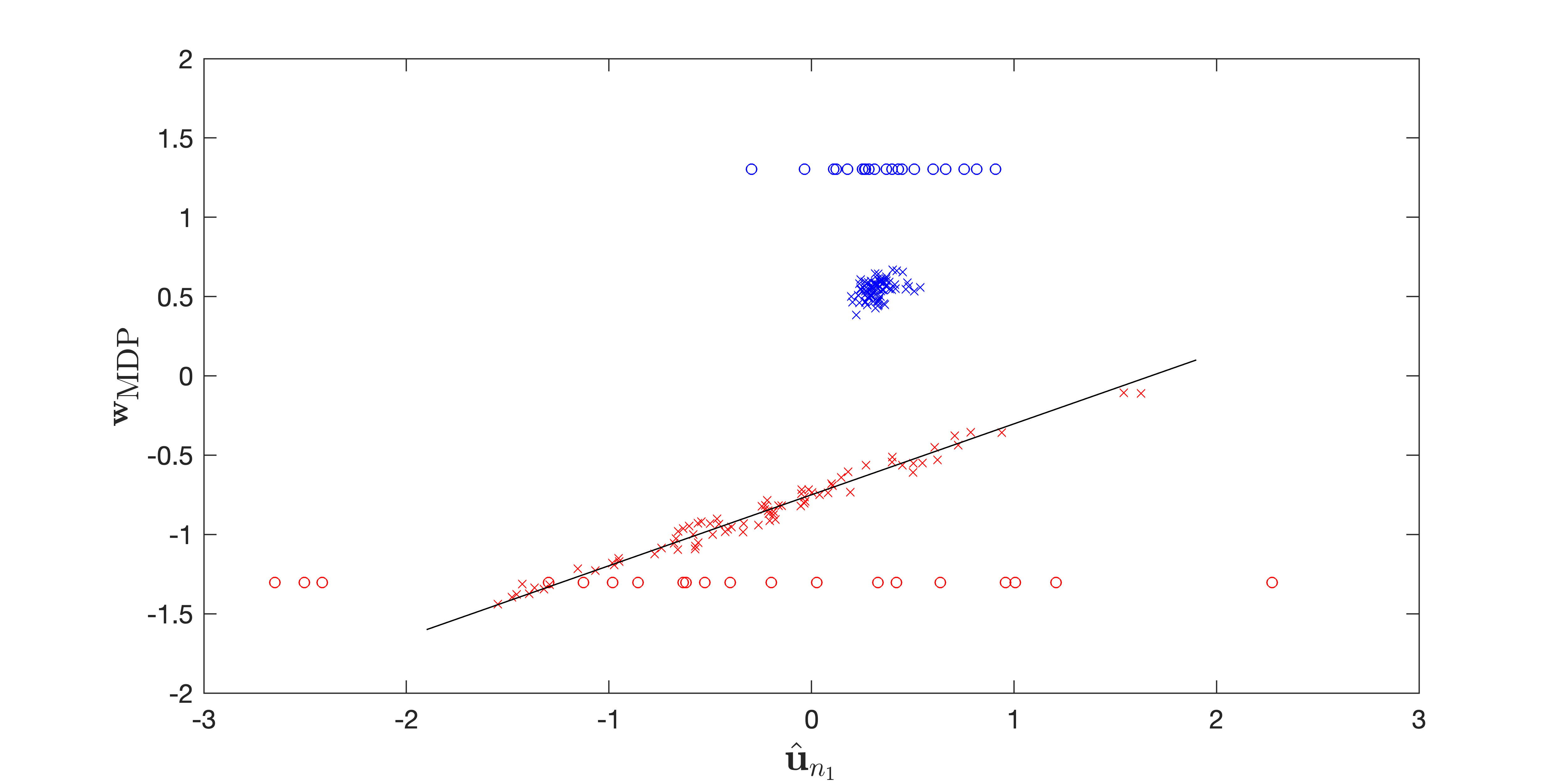} }}%
    \caption{$2$-dimensional projections of training data $\Xc$ (class 1: blue circles, class 2: red circles) and independent test data $\Yc$ (class 1: blue crosses, class 2: red crosses) onto (a) $\Sc_1 = {\rm span}(\hat\uv_1, \mdp)$ and (b) $\Sc_{n_1} = {\rm span}(\hat\uv_{n_1}, \mdp)$. 
    }%
    \label{fig:app:SDP}%
\end{figure}

Furthermore, we can show that the projected ridged linear discriminant vector $\vv_{\alpha}$ defined in Section~\ref{sec:estimation ridge} with a negative ridge parameter can also be a second maximal data piling direction for these cases. Theorem~\ref{thm:ridge unequal tail eigenvalues app} shows that $\vv_{-\tau_1^2}$ (or $\vv_{-\tau_2^2}$) yields second data piling if (i) $\beta_1 = 1, \beta_2 < 1$ (or (ii) $\beta_1 < 1, \beta_2 = 1$), respectively. The proof of this result closely follows the argument used in Theorem~\ref{thm:ridge equal tail eigenvalues piling distance}.

\begin{theorem}\label{thm:ridge unequal tail eigenvalues app}
Suppose Assumptions \ref{assume:1}---\ref{assume:5} hold and assume $\beta_k = 1$, $\beta_s < 1$ $(1 \leq k \ne s \leq 2)$. Then for $\hat\alpha_k$ chosen as an HDLSS-consistent estimator of $-\tau_k^2$, ${\rm Angle}{(\vv_{\hat\alpha_k}, \uv_{(k),i, \Sc})} \xrightarrow{P} \pi/2$ as $p \to \infty$ for all $1 \leq i \leq m_k$. Moreover, for any independent observation $Y \in \Yc$,
    \begin{align}
    \frac{1}{\sqrt{p}} \vv_{\hat\alpha_k}^\top(Y-\bar{X}) \xrightarrow{P} \begin{cases} \gamma_{A,k}(\eta_2(1-\cos^2\varphi)\delta^2 - (\tau_1^2 - \tau_2^2) / n), & \pi(Y) = 1, \\ 
    \gamma_{A,k}(-\eta_1(1-\cos^2\varphi)\delta^2 - (\tau_1^2 - \tau_2^2) / n), & \pi(Y) = 2 
    \end{cases}
    \end{align}
as $p\to\infty$ where $\gamma_{A,k}$ is a strictly positive random variable depending on the true principal component scores of $\Xc_k$.
\end{theorem}

Similarly to the case of $\beta_1 = \beta_2=1$ and $\tau_1 = \tau_2$ in Appendix~\ref{app:strong spikes with equal tails}, we can show that $\left\{\wv \right\} \in \Ac$ is asymptotically close to the subspace spanned by $\vv_{\hat\alpha_1}$ (or $\vv_{\hat\alpha_2}$) and $\left\{\hat\uv_i \right\}_{i \in \left\{1, \ldots, n-2 \right\} \setminus \Dc}$ for case (i) (or (ii)), respectively. This naturally implies that $\vv_{\hat\alpha_1}$ (or $\vv_{\hat\alpha_2}$, respectively) is a second maximal data piling direction. Furthermore, it can be shown that the bias-corrected projected ridge classification rule $\phi_{\textup{b-PRD}, \alpha}$ in (\ref{eq:bPRDrule}) achieves perfect classification only at the negative ridge parameter $\alpha = -\tau_k^2$ if $\beta_k = 1, \beta_s < 1$ ($1 \leq k \ne s \leq 2$).

As discussed in Appendix~\ref{app:strong spikes with unequal tails}, the use of $\vv_{\alpha}$ requires correctly identifying $\Dc$ and determining the signal subspace $\Sc$. This becomes challenging when $\beta_1 < 1, \beta_2 = 1$ and $\tau_1 > \tau_2$ as shown in Lemma~\ref{lem:asymptotic property of Sw with b1 < 1 and b2 = 1}. Similarly done in Appendix~\ref{app:strong spikes with unequal tails} for the case of strong spikes, we can set $\Dc = \{1, \ldots, m_2, n_1, \ldots, n_1+m_2-1 \}$. Even if $\vv_{\alpha}$ is affected by $m_2$ additional noisy directions, it can still yield second data piling when using $\alpha := \hat\alpha_2$, and $\phi_{\textup{b-PRD}, \hat\alpha_2}$ achieves asymptotic perfect classification.

\section{Proofs of Main Results}\label{app:proof of main results}
In this section, we give the proofs of main lemmas and theorems. Unless otherwise stated, we only give the proofs for the case of $\beta_1 = \beta_2 = 1$, $\tau_1 > \tau_2$ for brevity. The proofs for the other cases are quite similar to, but much simpler than, those for this case.

For any vector $\vv \in \Real^{l}$ ($l \in \mathbb{N}$), let $[\vv]_i$ denote the $i$th element of $\vv$. For any matrix $\Mv \in \Real^{l \times l'}$ ($l, l' \in \mathbb{N}$), let $[\Mv]_i$ and $[\Mv]^j$ denote the $i$th row and the $j$th column of $\Mv$, respectively. Also, let $[\Mv]_{i,j}$ denote the $(i,j)$-coordinate of $\Mv$. Let $\1v_{l} \in \Real^{l}$ (and $\0v_{l} \in \Real^{l}$) denote a vector whose all entries are $1$ (and $0$, respectively). Write an $(l \times l)$ identity matrix as $\Iv_{l}$, and an $(l \times l')$ matrix whose entries are all zero as $\Ov_{l \times l'}$. Also, for $k = 1, 2$, let $\Jv_{n_k}$ be the matrix of size $n_k \times n_k$ whose all entries are $1$ and $\Jv = \begin{pmatrix} \frac{1}{n_1}\Jv_{n_1} & \Ov_{n_1 \times n_2} \\ \Ov_{n_2 \times n_1} & \frac{1}{n_2}\Jv_{n_2} \end{pmatrix}$.

\subsection{Preliminary Lemmas}
Recall that the matrix of true principal component scores of $\Xv_k = [X_{k1}, \ldots, X_{kn_k}]$ is
$\Zv_{k} = \Lambdav_{(k)}^{-\frac{1}{2}}\Uv_{(k)}^\top(\Xv_k - \muv_{(k)} \1v_{n_k}^\top) = [\zv_{k,1}, \ldots, \zv_{k,p}]^\top \in \Real^{p \times n_k}$ where $\zv_{k,j}$ is a vector of $j$th principal component scores of the $k$th class. We write $\bar{\zv}_{k,i} = n_k^{-1}\zv_{k,i}^\top\1v_{n_k}$. Also, denote a vector of true principal component scores of independent observation $Y$ by $\zetav = (\zeta_1, \ldots, \zeta_p)^\top$. Then, the elements of $\Zv_{k}$ and $\zetav$ are uncorrelated, and have zero mean and unit variance. The following lemma follows directly from Lemma C.1 of \citet{Chang2021}. 

\begin{lemma}\label{app:lemma:prelim} Suppose Assumptions \ref{assume:1}---\ref{assume:5} hold. For $k = 1, 2$, the following hold as $p \to \infty$.
\begin{itemize}
    \item[(i)] $p^{-1}\muv^\top\Uv_{(k)}\Lambdav_{(k)}^{1/2} \zetav \xrightarrow{P} \begin{cases} \sum_{i=1}^{m_k} \sigma_{k,i} \cos\theta_{k,i}\delta \zeta_i, & \beta_k = 1, \\ 0,  & 0 \leq \beta_k < 1. \end{cases}$ 
    
    \item[(ii)] $p^{-1}\muv^\top\Uv_{(k)}\Lambdav_{(k)}^{1/2} \Zv_{k} \xrightarrow{P} \begin{cases} \sum_{i=1}^{m_k} \sigma_{k,i} \cos\theta_{k,i}\delta \zv_{k,i}^\top, & \beta_k = 1, \\ 0,  & 0 \leq \beta_k < 1. \end{cases}$ 
    
    \item[(iii)] $p^{-1}\Zv_{k}^\top\Lambdav_{(k)} \zetav \xrightarrow{P} \begin{cases} \sum_{i=1}^{m_k} \sigma_{k,i}^2 \zv_{k,i} \zeta_i, & \beta_k = 1, \\ 0,  & 0 \leq \beta_k < 1. \end{cases}$  
    
    \item[(iv)] $p^{-1}\Zv_{k}^\top \Lambdav_{(k)} \Zv_{k}  \xrightarrow{P} \begin{cases} \sum_{i=1}^{m_k} \sigma_{k,i}^2 \zv_{k,i} \zv_{k,i}^\top + \tau_k^2 \Iv_{n_k}, & \beta_k = 1, \\ \tau_k^2\Iv_{n_k},  & 0 \leq \beta_k < 1. \end{cases}$ 
\end{itemize}
\end{lemma}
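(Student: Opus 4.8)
The plan is to observe that, for each fixed $k\in\{1,2\}$, all four limits involve only the quantities attached to the $k$-th population --- the mean difference $\muv$, the eigenpair $(\Uv_{(k)},\Lambdav_{(k)})$ of $\Sigmav_{(k)}$, the score matrix $\Zv_{(k)}$, and the independent score vector $\zeta$ --- and that Assumptions~\ref{assume:1}, \ref{assume:2} and~\ref{assume:4} restricted to the $k$-th class are exactly the hypotheses of Lemma~C.1 of \citet{Chang2021}; so the cleanest proof simply applies that lemma separately to each class, nothing about heterogeneity playing a role. For completeness I would also record the self-contained argument, outlined below.

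Fix $k$ and write each of (i)--(iv) as $p^{-1}\sum_{i=1}^{p}(\cdot)$, splitting into a leading block $1\le i\le m_k$ and a tail block $i>m_k$. In the leading block I would use $\muv^\top\uv_{(k),i}=\|\muv\|\cos\ang(\uv_{(k),i},\muv)$ together with $p^{-1/2}\|\muv\|\to\delta$ (Assumption~\ref{assume:1}), $\ang(\uv_{(k),i},\muv)\to\theta_{(k),i}$ (Assumption~\ref{assume:4}), and the eigenvalue asymptotics $p^{-1}\lambda_{(k),i}\to\sigma_{(k),i}^2$ when $\beta_k=1$ and $\lambda_{(k),i}=O(p^{\beta_k})$ in general (Assumption~\ref{assume:2}). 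For $\beta_k=1$ the scalar factors $p^{-1/2}\|\muv\|$ and $p^{-1/2}\lambda_{(k),i}^{1/2}$ (resp.\ $p^{-1}\lambda_{(k),i}$) converge to the constants on the right-hand sides, while the finite-dimensional random factors $\zeta_i$ and the entries of $z_{(k),i}$ are $O_P(1)$, so Slutsky's lemma gives the claimed limits; for $\beta_k<1$ the same counting shows the leading block is $O_P(p^{(\beta_k-1)/2})$ in (i)--(iii) and $O_P(p^{\beta_k-1})$ in (iv), hence $o_P(1)$, leaving only $\tau_k^2\Iv_{n_k}$ in (iv).

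For the tail block the PC scores are mean zero, uncorrelated within each observation, and independent across observations, so in (i)--(iii) a second-moment bound suffices: the tail block of (i) equals $\sum_{i>m_k}a_i\zeta_i$ with $a_i=p^{-1}(\muv^\top\uv_{(k),i})\tau_{(k),i}$, and uncorrelatedness gives $\E\big(\sum_i a_i\zeta_i\big)^2=\sum_i a_i^2\le p^{-2}\,(\sup_i\tau_{(k),i}^2)\sum_{i=1}^p(\muv^\top\uv_{(k),i})^2=p^{-2}\,(\sup_i\tau_{(k),i}^2)\,\|\muv\|^2=O(p^{-1})\to0$, so Chebyshev applies; the tail blocks of (ii), (iii) and the off-diagonal entries of (iv) are handled identically, the last using independence of two distinct observation indices to factor a fourth moment into a product of Kronecker deltas, again leaving a variance $O(p^{-1})$.

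The only step requiring a genuine law of large numbers --- and the main obstacle --- is the diagonal of (iv): showing $p^{-1}\sum_{i>m_k}\tau_{(k),i}^2[z_{(k),i}]_j^2\xrightarrow{P}\tau_k^2$. Its mean is $p^{-1}\sum_{i>m_k}\tau_{(k),i}^2\to\tau_k^2$ by Assumption~\ref{assume:2}, and its variance is $p^{-2}\sum_{i,i'>m_k}\tau_{(k),i}^2\tau_{(k),i'}^2\Cov([z_{(k),i}]_j^2,[z_{(k),i'}]_j^2)$; here Assumption~\ref{assume:3} enters, since after relabeling by the permutation under which the score sequence is $\rho$-mixing each covariance is bounded by a multiple of $\rho(|\cdot|)$ (by the $\rho$-mixing correlation inequality and the uniformly bounded fourth moments), and $p^{-1}\sum_{d=1}^{p}\rho(d)\to0$ by Ces\`aro summability since $\rho(d)\to0$, forcing the variance to $0$. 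Assembling the leading and tail blocks for $j=j'$ and $j\ne j'$ gives (iv), and the remaining three parts follow the same way. I expect the permutation bookkeeping in Assumption~\ref{assume:3} and the Ces\`aro estimate to be the only non-mechanical points.
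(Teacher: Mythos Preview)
Your proposal is correct and takes essentially the same approach as the paper: the paper's entire proof is the single sentence ``The following lemma follows directly from Lemma~C.1 of \citet{Chang2021},'' which is exactly your first observation that the four limits involve only class-$k$ quantities and so Chang's lemma applies verbatim to each class. Your additional self-contained sketch (leading/tail split, Chebyshev for the off-diagonal tail, $\rho$-mixing LLN for the diagonal of (iv)) is more than the paper provides and is sound.
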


We will investigate asymptotic properties of the sample within-scatter matrix $\Sv_W = (\Xv - \bar{\Xv})(\Xv - \bar{\Xv})^\top = \sum_{i=1}^{n-2}\hat\lambda_i \hat\uv_i \hat\uv_i^\top$ where $\Xv = [\Xv_1,~\Xv_2]$ and  $\bar{\Xv} = [\bar{\Xv}_1,~\bar{\Xv}_2]$. Since the dimension of $\Sv_W$ grows as $p \to \infty$, we instead use the $n \times n$ dual matrix, $\Sv_D = (\Xv - \bar{\Xv})^\top (\Xv - \bar{\Xv})$, which shares its nonzero eigenvalues with $\Sv_W$. We write the singular-value-decomposition of $\Xv - \bar{\Xv} = \hat\Uv_1 \Dv_1 \hat\Vv_1^\top = \sum_{i=1}^{n-2}d_i\hat\uv_i \hat\vv_i^\top$, where $\hat\uv_i$ is the $i$th eigenvector of $\Sv_W$, $d_i$ is the $i$th largest nonzero singular value, and $\hat\vv_i$ is the vector of normalized sample principal component scores. Write $\hat\vv_{i} = (\hat\vv_{i,1}^\top, \hat\vv_{i,2}^\top)^\top$ where $\hat\vv_{i,1} \in \Real^{n_1}$ and $\hat\vv_{i,2} \in \Real^{n_2}$. Then for $1 \leq i \leq n-2$, we can write
\begin{equation}\label{eq:app:svd}
    \begin{aligned}
        \hat\uv_i = d_i^{-1}(\Xv - \bar{\Xv})\hat\vv_i = \hat\lambda_i^{-1/2} \sum_{k=1}^{2} \Uv_{(k)}\Lambdav_{(k)}^{1/2}\Zv_{k}\left(\Iv_{n_k} - \frac{1}{n_k}\Jv_{n_k}\right)\hat\vv_{i,k}.
    \end{aligned}
\end{equation}
Recall that $\Wv_{k}=[\sigma_{k,1} \zv_{k,1}, \ldots, \sigma_{k,m_{k}} \zv_{k,m_{k}}]$ is  $n_k \times m_{k}$ matrix of the leading $m_{k}$ principal component scores of the $k$th class for each $k = 1, 2$.

\begin{lemma}\label{app:lemma:dual}
Suppose Assumptions \ref{assume:1}---\ref{assume:5} hold. Then,
\begin{align*}
    p^{-1} \Sv_D \xrightarrow{P} \Sv_0 := \begin{pmatrix} \Sv_{0,11} & \Sv_{0, 12} \\ \Sv_{0, 21} & \Sv_{0, 22} \end{pmatrix}
\end{align*}
as $p \to \infty$ where 
\begin{align*}
\Sv_{0, ii} = \begin{cases}
    (\Iv_{n_i} - \frac{1}{n_i}\Jv_{n_i})(\Wv_{i}\Wv_{i}^\top + \tau_i^2\Iv_{n_i})(\Iv_{n_i} - \frac{1}{n_i}\Jv_{n_i} ), & \beta_i = 1, \\
    \tau_i^2(\Iv_{n_i} - \frac{1}{n_i}\Jv_{n_i} ), & 0 \leq \beta_i < 1
    \end{cases}
\end{align*}
for $i = 1, 2$ and
\begin{align*}
\Sv_{0, ij} = \begin{cases}
    (\Iv_{n_i} - \frac{1}{n_i}\Jv_{n_i})(\Wv_{i}\Rv_{i}^\top \Rv_{j}\Wv_{j}^\top)(\Iv_{n_j} - \frac{1}{n_j}\Jv_{n_j} ), & \beta_1 = \beta_2 = 1, \\
    \Ov_{n_i \times n_j}, & o.w
    \end{cases}
\end{align*}    
for $1 \leq i \ne j \leq 2$.
\end{lemma}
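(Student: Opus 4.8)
The plan is to pass to the dual matrix $\Sv_D=(\Xv-\bar{\Xv})^\top(\Xv-\bar{\Xv})$, remove the sample mean by a fixed idempotent projection, and then analyze the resulting Gram matrix of the sphered data block by block. First I would record the identity $\Xv-\bar{\Xv}=\Xv(\Iv_n-\Jv)=(\Xv-\E\Xv)(\Iv_n-\Jv)$, valid because $\bar{\Xv}=\Xv\Jv$ and $\E\Xv=[\,\muv_{(1)}\1v_{n_1}^\top~\muv_{(2)}\1v_{n_2}^\top\,]$ is constant within each class so that $\E\Xv(\Iv_n-\Jv)=\Ov$; together with $\Xv-\E\Xv=[\,\Uv_{(1)}\Lambdav_{(1)}^{1/2}\Zv_{(1)}~\Uv_{(2)}\Lambdav_{(2)}^{1/2}\Zv_{(2)}\,]$ and symmetry of $\Iv_n-\Jv$ this gives $p^{-1}\Sv_D=(\Iv_n-\Jv)\,\Gv_p\,(\Iv_n-\Jv)$ with $\Gv_p:=p^{-1}(\Xv-\E\Xv)^\top(\Xv-\E\Xv)$. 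Since $\Av\mapsto(\Iv_n-\Jv)\Av(\Iv_n-\Jv)$ is continuous, by the continuous mapping theorem it suffices to prove $\Gv_p\xrightarrow{P}\Gv_0$ for the explicit block matrix $\Gv_0$ with diagonal blocks $\Wv_{(i)}\Wv_{(i)}^\top+\tau_i^2\Iv_{n_i}$ when $\beta_i=1$ (and $\tau_i^2\Iv_{n_i}$ when $\beta_i<1$) and off-diagonal blocks $\Wv_{(i)}\Rv_{(i)}^\top\Rv_{(j)}\Wv_{(j)}^\top$ when $\beta_1=\beta_2=1$ (and $\Ov$ otherwise), and then to note that conjugating $\Gv_0$ by the block-diagonal idempotent $\Iv_n-\Jv=\diag(\Iv_{n_1}-\tfrac1{n_1}\Jv_{n_1},\,\Iv_{n_2}-\tfrac1{n_2}\Jv_{n_2})$ reproduces $\Sv_0$.

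For the blockwise limit I would split $\Uv_{(k)}\Lambdav_{(k)}^{1/2}\Zv_{(k)}=\Rv_k+\Nv_k$ into a ``signal part'' $\Rv_k=\Uv_{(k),1}\Lambdav_{(k),1}^{1/2}\Zv_{(k),1}$ carried by the $m_k$ leading components and a ``noise part'' $\Nv_k=\Uv_{(k),2}\Lambdav_{(k),2}^{1/2}\Zv_{(k),2}$, so the $(k,l)$ block of $\Gv_p$ is $p^{-1}(\Rv_k^\top\Rv_l+\Rv_k^\top\Nv_l+\Nv_k^\top\Rv_l+\Nv_k^\top\Nv_l)$. On the diagonal, $\Uv_{(k),1}^\top\Uv_{(k),2}=\Ov$ kills the cross terms and Lemma~\ref{app:lemma:prelim}$(iv)$ gives $p^{-1}(\Rv_k^\top\Rv_k+\Nv_k^\top\Nv_k)=p^{-1}\Zv_{(k)}^\top\Lambdav_{(k)}\Zv_{(k)}\xrightarrow{P}\Wv_{(k)}\Wv_{(k)}^\top+\tau_k^2\Iv_{n_k}$ if $\beta_k=1$ and $\tau_k^2\Iv_{n_k}$ if $\beta_k<1$; this is the step that uses the $\rho$-mixing Assumption~\ref{assume:3} and the uniformly bounded fourth moments. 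Off the diagonal, with $\tau_{\max}^2:=\sup_{k,i}\tau_{(k),i}^2<\infty$: since $p^{-1/2}\Zv_{(k),1}^\top\Lambdav_{(k),1}^{1/2}\xrightarrow{P}\Wv_{(k)}$ when $\beta_k=1$ and $p^{-1/2}\Lambdav_{(k),1}^{1/2}\to\Ov$ when $\beta_k<1$, while $\Uv_{(k),1}^\top\Uv_{(l),1}=\Rv_{(k)}^{(p)\top}\Rv_{(l)}^{(p)}\to\Rv_{(k)}^\top\Rv_{(l)}$ by Assumption~\ref{assume:5}, one gets $p^{-1}\Rv_k^\top\Rv_l\xrightarrow{P}\Wv_{(k)}\Rv_{(k)}^\top\Rv_{(l)}\Wv_{(l)}^\top$ if $\beta_1=\beta_2=1$ and $\xrightarrow{P}\Ov$ otherwise; each entry of $\Uv_{(k),1}^\top\Nv_l$ has conditional variance at most $\tau_{\max}^2$ hence is $O_p(1)$, each entry of $\Lambdav_{(k),1}^{1/2}\Zv_{(k),1}$ is $O_p(p^{\beta_k/2})$, so $p^{-1}\Rv_k^\top\Nv_l$ and $p^{-1}\Nv_k^\top\Rv_l$ are $O_p(p^{\beta_k/2-1})=o_p(1)$; and the entries of $p^{-1}\Nv_k^\top\Nv_l$ ($k\ne l$) are conditionally mean zero with variance at most $p^{-2}\tau_{\max}^4\fnorm{\Uv_{(k),2}^\top\Uv_{(l),2}}^2\le p^{-2}\tau_{\max}^4\,p\to0$, using $\fnorm{\Uv_{(k),2}^\top\Uv_{(l),2}}^2=\tr(\Uv_{(l),2}^\top\Uv_{(k),2}\Uv_{(k),2}^\top\Uv_{(l),2})\le\tr(\Uv_{(l),2}^\top\Uv_{(l),2})=p-m_l$ and the independence of classes $1$ and $2$. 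Assembling these finitely many blocks of fixed size yields $\Gv_p\xrightarrow{P}\Gv_0$, and then conjugating by $\Iv_n-\Jv$ and simplifying $(\Iv_{n_i}-\tfrac1{n_i}\Jv_{n_i})(\Wv_{(i)}\Wv_{(i)}^\top+\tau_i^2\Iv_{n_i})(\Iv_{n_i}-\tfrac1{n_i}\Jv_{n_i})=(\Iv_{n_i}-\tfrac1{n_i}\Jv_{n_i})\Wv_{(i)}\Wv_{(i)}^\top(\Iv_{n_i}-\tfrac1{n_i}\Jv_{n_i})+\tau_i^2(\Iv_{n_i}-\tfrac1{n_i}\Jv_{n_i})$ (together with the analogous off-diagonal identity) by idempotency of $\Iv_{n_i}-\tfrac1{n_i}\Jv_{n_i}$ produces exactly the stated $\Sv_0$.

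The main obstacle is controlling the off-diagonal ``signal$\times$noise'' and ``noise$\times$noise'' blocks: the relevant bilinear forms are only $O_p(1)$ before the $p^{-1}$ scaling, so one must extract the extra $p^{-1/2}$ of decay by exploiting (i) that the cross Gram matrix of the orthonormal complements $\Uv_{(k),2},\Uv_{(l),2}$ has squared Frobenius norm only $O(p)$ rather than $O(p^2)$, and (ii) that the two class-wise sphered data matrices $\Zv_{(1)},\Zv_{(2)}$ are independent so the relevant conditional means vanish. Everything else -- the diagonal blocks via Lemma~\ref{app:lemma:prelim}, and the passage from $\Gv_p$ to $p^{-1}\Sv_D$ via the continuous mapping theorem and idempotency -- is routine.
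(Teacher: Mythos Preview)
Your proposal is correct and follows essentially the same route as the paper: both factor $p^{-1}\Sv_D=(\Iv_n-\Jv)\Gv_p(\Iv_n-\Jv)$, invoke Lemma~\ref{app:lemma:prelim}$(iv)$ for the diagonal blocks, and control the off-diagonal block by splitting it into signal--signal, signal--noise, and noise--noise pieces with Frobenius/variance bounds of the same type (the paper's $\Av_{11},\Av_{12},\Av_{21},\Av_{22}$ are exactly your four pieces). The only cosmetic differences are that you phrase the outer step via the continuous mapping theorem and bound the cross terms entrywise rather than via $\E\fnorm{\cdot}^2$, but the underlying estimates are the same.
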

\begin{proof}
Observe that $\Xv - \bar{\Xv} = \Xv(\Iv_n - \Jv) = [\Uv_{(1)}\Lambdav_{(1)}^{1/2}\Zv_{1},~\Uv_{(2)}\Lambdav_{(2)}^{1/2}\Zv_{2}](\Iv_{n} - \Jv)$ and
\begin{align*}
    \frac{\Sv_D}{p} &= (\Iv_{n} - \Jv)\begin{pmatrix}
    p^{-1}\Zv_{1}^\top \Lambdav_{(1)} \Zv_{1} & p^{-1}\Zv_{1}^\top \Lambdav_{(1)}^{1/2}\Uv_{(1)}^\top\Uv_{(2)}\Lambdav_{(2)}^{1/2}\Zv_{2} \\ 
    p^{-1}\Zv_{2}^\top \Lambdav_{(2)}^{1/2}\Uv_{(2)}^\top \Uv_{(1)}\Lambdav_{(1)}^{1/2}\Zv_{1} & p^{-1}\Zv_{2}^\top \Lambdav_{(2)}\Zv_{2}
    \end{pmatrix} (\Iv_{n} - \Jv). \\
\end{align*}
By Lemma~\ref{app:lemma:prelim} (iv), we have
\begin{align*}
    p^{-1}\Zv_{i}^\top \Lambdav_{(i)}\Zv_{i} \xrightarrow{P} 
    \begin{cases}
    \Wv_{i}\Wv_{i}^\top + \tau_i^2 \Iv_{n_i}, & \beta_i = 1, \\
    \tau_i^2 \Iv_{n_i} , & 0 \leq \beta_i < 1
    \end{cases}  
\end{align*}
as $p \to \infty$. Thus, it suffices to show that
\begin{align*}
    p^{-1}\Zv_{1}^\top \Lambdav_{(1)}^{1/2}\Uv_{(1)}^\top \Uv_{(2)}\Lambdav_{(2)}^{1/2}\Zv_{2} \xrightarrow{P} \begin{cases}
    \Wv_{1}\Rv_{1}^\top \Rv_{2}\Wv_{2}^\top, & \beta_1 = \beta_2 = 1,\\
    \Ov_{n_1 \times n_2}, & o.w
    \end{cases}
\end{align*}
as $p \to \infty$. Let $\Av_{ij} = \Zv_{1,i}^\top \Lambdav_{(1),i}^{1/2}\Uv_{(1),i}^\top\Uv_{(2),j}\Lambdav_{(2),j}^{1/2}\Zv_{2,j}$ for $i, j = 1, 2$ where $\Uv_{(k),1} = [\uv_{(k),1}, \ldots, \uv_{(k),m_k}]$, $\Uv_{(k),2} = [\uv_{(k),m_k+1}, \ldots, \uv_{(k),p}]$, $\Lambdav_{(k),1} = \diag{(\{\lambda_{(k),i}\}_{i=1}^{m_k})}$, $\Lambdav_{(k),2} = \diag{(\{\lambda_{(k),i}\}_{i=m_k+1}^{p})}$, $\Zv_{k,1} = [\zv_{k,1}, \ldots, \zv_{k,m_k}]^\top$ and $\Zv_{k,2} = [\zv_{k,m_k+1}, \ldots, \zv_{k,p}]^\top$. Then, we can decompose 
\begin{align*}
    \Zv_{1}^\top \Lambdav_{(1)}^{1/2}\Uv_{(1)}^\top \Uv_{(2)}\Lambdav_{(2)}^{1/2}\Zv_{2} = \sum_{i,j=1}^2 \Av_{ij}.
\end{align*}
By Assumption~\ref{assume:2}, there exists $M_k < \infty$ such that $\tau_{k,i} \leq M_k$ for all $i$. Thus,
\begin{align*}
    p^{-2}\mathbb{E}\fnorm{\Av_{12}}^2 &= p^{-2}\mathbb{E}\tr{(\Av_{12}^\top\Av_{12})} \\
&=p^{-2}n_1n_2\tr{(\Uv_{(2),2}^\top\Uv_{(1),1}\Lambdav_{(1),1}\Uv_{(1),1}^\top\Uv_{(2),2}\Lambdav_{(2),2} )} \\
    &=p^{-2}n_1n_2\sum_{l=1}^{m_1}\sum_{l' = m_2+1}^{p} (p^{\beta_1} \sigma_{1,l}^2+ \tau_{1,l}^2)\tau_{2,l'}^2(\uv_{(1),l}^\top \uv_{(2),l'})^2 \\
    &\leq p^{-2}n_1n_2m_1(p^{\beta_1} \sigma_{1,1}^2+M_1^2)M_2^2 \rightarrow 0
\end{align*}
as $p \to \infty$ where $\fnorm{~\cdot~}$ denotes the Frobenius norm of a matrix. Thus, $p^{-1}\Av_{12} \xrightarrow{P} \Ov_{n_1 \times n_2} $ as $p \to \infty$, and $p^{-1}\Av_{21}, p^{-1}\Av_{22} \xrightarrow{P}  \Ov_{n_1 \times n_2} $ can be shown in a similar manner. Then it suffices to show that $p^{-1}\Av_{11} \xrightarrow{P}  \Wv_{1}\Rv_{1}^\top \Rv_{2} \Wv_{2}^\top$ if $\beta_1 = \beta_2 = 1$ and $p^{-1}\Av_{11} \xrightarrow{P} \Ov_{n_1 \times n_2}$ otherwise
as $p \to \infty$. If $\beta_1 < 1$ or $\beta_2 < 1$, then
\begin{align*}
    &p^{-2}\mathbb{E}\fnorm{\Av_{11}}^2 =p^{-2}n_1n_2\sum_{l=1}^{m_1}\sum_{l' = 1}^{m_2} (p^{\beta_1}\sigma_{1,l}^2+\tau_{1,l}^2)(p^{\beta_2}\sigma_{2,l'}^2 +\tau_{2,l'}^2)(\uv_{(1),l}^\top \uv_{(2),l'})^2 \\
    &\leq p^{-2}n_1n_2 m_1m_2(p^{\beta_1}\sigma_{1,1}^2+M_1^2)(p^{\beta_2}\sigma_{2,1}^2+M_2^2)\rightarrow 0
\end{align*}
and $p^{-1} \Av_{11} \xrightarrow{P} \Ov_{n_1 \times n_2}$ as $p \to \infty$ since $\beta_1 + \beta_2 < 2$. If $\beta_1 = \beta_2 = 1$, then $
    p^{-1}\Av_{11} = p^{-1}\Zv_{1,1}^\top \Lambdav_{(1),1}^{1/2}\Rv_{1}^{(p) \top}\Rv_{2}^{(p)}\Lambdav_{(2),1}^{1/2}\Zv_{2,1} \xrightarrow{P} \Wv_{1} \Rv_{1}^\top \Rv_{2} \Wv_{2}^\top
$ 
as $p \to \infty$. 
\end{proof}

\subsection{Proof of Lemmas~\ref{lem:asymptotic property of Sw equal tails} and~\ref{lem:asymptotic property of Sw unequal tails}}
Lemmas~\ref{app:lemma:prelim} and~\ref{app:lemma:dual} play an important role in the proof of Lemmas~\ref{lem:asymptotic property of Sw equal tails} and~\ref{lem:asymptotic property of Sw unequal tails}. Recall that for any square matrix $\Mv$, $\phi_i(\Mv)$ and $v_{i}(\Mv)$ denote the $i$th largest eigenvalue of $\Mv$ and its corresponding eigenvector, respectively. Also, let $v_{ij}(\Mv)$ be the $j$th coefficient of $v_i(\Mv)$. For $\Sv_0$ defined in Lemma \ref{app:lemma:dual}, write $v_i(\Sv_0) = (\tilde{v}_{i1}(\Sv_0)^\top, \tilde{v}_{i2}(\Sv_0)^\top)^\top$ where $\tilde{v}_{i1}(\Sv_0) \in \Real^{n_1}$ and $\tilde{v}_{i2}(\Sv_0) \in \Real^{n_2}$. Also, for $\Phiv_{\tau_1,\tau_2}$ defined in (\ref{eq:perturbed covariance}), write $v_{i}(\Phiv_{\tau_1, \tau_2}) = (\tilde{v}_{i1}(\Phiv_{\tau_1, \tau_2})^\top, \tilde{v}_{i2}(\Phiv_{\tau_1, \tau_2})^\top)^\top$ where $\tilde{v}_{i1}(\Phiv_{\tau_1, \tau_2}) \in \Real^{m_1}$ and $\tilde{v}_{i2}(\Phiv_{\tau_1, \tau_2}) \in \Real^{m_2}$.

\begin{proof}
The proof of Lemma~\ref{lem:asymptotic property of Sw equal tails} closely follows along the lines of Lemmas C.2 and C.3 of \citet{Chang2021} using Lemmas~\ref{app:lemma:prelim} and~\ref{app:lemma:dual}. To prove Lemma~\ref{lem:asymptotic property of Sw unequal tails}, assume $\beta_1 = \beta_2 = 1$ and $\tau_1 > \tau_2$. By Lemma~\ref{app:lemma:dual}, as $p \to \infty$,
\begin{align*}
    \frac{1}{p}\Sv_D \xrightarrow{P} \Sv_0 =  (\Iv_{n} - \Jv) \begin{pmatrix} \Wv_{1}\Wv_{1}^\top + \tau_1^2\Iv_{n_1} & \Wv_{1}\Rv_{1}^\top \Rv_{2} \Wv_{2}^\top \\  \Wv_{2}\Rv_{2}^\top \Rv_{1}\Wv_{1}^\top  & \Wv_{2}\Wv_{2}^\top + \tau_2^2 \Iv_{n_2} \end{pmatrix}  (\Iv_{n} - \Jv).
\end{align*}
To show (i), recall that $\Sv_W$ shares its nonzero eigenvalues with $\Sv_D$, and since $\phi_i$ is a continuous function of elements of a symmetric matrix, we have $p^{-1}\phi_i(\Sv_W) \xrightarrow{P} \phi_i(\Sv_0)$ as $p \to \infty$ for $1 \leq i \leq n-2$. First, let $\hv_0 = (\hv_{01}^\top, \0v_{n_2}^\top)^\top \in \Real^{n}$ be an unit vector satisfying 
\begin{align}\label{eq:g01}
    \Wv_{1}^\top \left(\Iv_{n_1} - \frac{1}{n_1}\Jv_{n_1}\right)\hv_{01} = \0v_{m_1}
\end{align}
and $\1v_{n_1}^\top \hv_{01} = 0$. Then, $\Sv_0 \hv_0 = \tau_1^2 \hv_0$. It implies that $\Sv_0$ has an eigenvalue $\tau_1^2$ of multiplicity $(n_1 - m_1 - 1)$. Likewise, we can show that $\Sv_0$ has an eigenvalue $\tau_2^2$ of multiplicity $(n_2-m_2-1)$. Lastly, let $\hv_i = (\hv_{i1}^\top, \hv_{i2}^\top)^\top \in \Real^{n}$ $(1 \leq i \leq m_1+m_2)$ be an unit vector with 
\begin{align}\label{eq:gik}
    \hv_{ik} = \left(\Iv_{n_k} - \frac{1}{n_k}\Jv_{n_k}\right)\Wv_{k}\Phiv_{k}^{-1/2} \tilde{v}_{ik}(\Phiv_{\tau_1, \tau_2})
\end{align}
for $k = 1, 2$. Then, $\Sv_0 \hv_i = \phi_i(\Phiv_{\tau_1, \tau_2})\hv_i$ for all $1 \leq i \leq m_1+m_2$. Thus, $\Sv_0$ has eigenvalues $\phi_i(\Phiv_{\tau_1, \tau_2})$ for $1 \leq i \leq m_1+m_2$. In summary, $\Sv_0$ has $(n-2)$ non-negative eigenvalues with $\tau_1^2$ of multiplicity $(n_1-m_1-1)$, $\tau_2^2$ of multiplicity $(n_2-m_2-1)$ and $\phi_i(\Phiv_{\tau_1, \tau_2})$ for $1 \leq i \leq m_1+m_2$. Moreover, note that $\Phiv_{1}$ is a positive definite matrix and 
\begin{align*}
    \phi_{m_1}(\Phiv_{\tau_1, \tau_2}) &= \max_{\Vc \subseteq \Real^{m_1+m_2}, \dim\Vc = m_1} \min_{\xv \in \Vc, \| \xv\| = 1} \xv^\top \Phiv_{\tau_1, \tau_2} \xv \\
    &\ge \min_{\xv \in \tilde{\Vc}} \xv^\top \Phiv_{\tau_1, \tau_2} \xv = \phi_{m_1}(\Phiv_{1}) + \tau_1^2 > \tau_1^2
\end{align*}
where 
\begin{align*}
    \tilde{\Vc} = {\rm span}\left(\begin{pmatrix} v_1(\Phiv_1) \\ \0v_{m_2} \end{pmatrix}, \cdots, \begin{pmatrix} v_{m_1}(\Phiv_1) \\ \0v_{m_2} \end{pmatrix} \right).
\end{align*}
Similarly, it can be shown that $\phi_{m_1+m_2}(\Phiv_{\tau_1, \tau_2}) > \tau_2^2$. Hence, $m_1\leq k_0 \leq m_1+m_2$ where $k_0$ is an integer which satisfies $\phi_{k_0}(\Phiv_{\tau_1, \tau_2}) > \tau_1^2 \ge \phi_{k_0+1}(\Phiv_{\tau_1, \tau_2})$ where we use the convention of $\phi_{m_1+m_2+1}(\Phiv_{\tau_1, \tau_2}) = 0$. In particular, if $m = m_1$, then
\begin{align*}
    &\phi_{m_1+1}(\Phiv_{\tau_1, \tau_2}) = \phi_{m+1}(\Phiv_{\tau_1, \tau_2}) \\
    &\leq \phi_{m+1}\begin{pmatrix} \Phiv_{1} & \Phiv_{1}^{1/2}\Rv_{(1)}^\top \Rv_{(2)} \Phiv_{2}^{1/2} \\ \Phiv_{2}^{1/2}\Rv_{(2)}^\top \Rv_{(1)}\Phiv_{1}^{1/2} & \Phiv_{2} \end{pmatrix} + \phi_1\begin{pmatrix} \tau_1^2\Iv_{m_1} & \Ov_{m_1 \times m_2} \\ \Ov_{m_2 \times m_1} & \tau_2^2 \Iv_{m_2} \end{pmatrix} \\
    &= 0+\tau_1^2 = \tau_1^2
\end{align*}
by Weyl's inequality. Since $\phi_{m}(\Phiv_{\tau_1, \tau_2}) = \phi_{m_1}(\Phiv_{\tau_1, \tau_2}) > \tau_1^2$, $k_0 = m = m_1$ in this case.

To show (ii), we will show that 
\begin{equation}\label{eq:uij}
    \begin{aligned}
        \hat\uv_i^\top \uv_j \xrightarrow{P} \begin{cases}
        D_{i,j},  & 1 \leq i \leq k_0, \\
        0, & k_0+1 \leq i \leq k_0 + (n_1-m_1-1), \\
        D_{i-(n_1-m_1-1),j}, & k_0 + (n_1 - m_1) \leq i \leq n_1+m_2-1, \\
        0, & n_1+m_2 \leq i \leq n-2
    \end{cases}
    \end{aligned}
\end{equation}
where $k_0$ $(m_1 \leq k_0 \leq m_1+m_2)$ is defined in (i) and
\begin{align}\label{eq:Dij}
D_{i,j} := \frac{1}{\sqrt{\phi_i(\Phiv_{\tau_1, \tau_2})}} \sum_{k=1}^{2} [\Rv_{k}]_j\Phiv_k^{1/2} \tilde{v}_{ik}(\Phiv_{\tau_1, \tau_2})
\end{align}
for $1 \leq i \leq m_1+m_2$ and $1 \leq j \leq m$. 
Using~(\ref{eq:app:svd}), we can write
\begin{align*}
    \uv_j^\top\hat\uv_i = \left( \frac{\hat\lambda_i}{p} \right)^{-1/2}\sum_{k=1}^{2} \frac{1}{\sqrt{p}}\uv_j^\top \Uv_{(k)}\Lambdav_{(k)}^{1/2}\Zv_{k}\left(\Iv_{n_k} - \frac{1}{n_k}\Jv_{n_k}\right)\hat\vv_{i,k}.
\end{align*}

First, note that $p^{-1/2} \uv_{j}^\top \Uv_{(k)}\Lambdav_{(k)}^{1/2} \Zv_{k}$ can be decomposed into two terms:
\begin{align*}
    \frac{1}{\sqrt{p}}\uv_{j}^\top \Uv_{(k)} \Lambdav_{(k)}^{1/2} \Zv_{k} = \sum_{i=1}^{m_k} \frac{1}{\sqrt{p}}\uv_{j}^\top \uv_{(k),i}\lambda_{(k),i}^{1/2}\zv_{k,i}^\top + \sum_{i=m_k+1}^{p} \frac{1}{\sqrt{p}}\uv_{j}^\top \uv_{(k),i}\lambda_{(k),i}^{1/2}\zv_{k,i}^\top
\end{align*}
for $1 \leq j \leq m$. The first term converges to $[\Rv_{k}]_j\Wv_{k}^\top$ in probability as $p \to \infty$. The second term converges to zero in probability since for any $\epsilon > 0$,
\begin{equation}\label{eq:app:evec:b}
    \begin{aligned}
    &\mathbb{P}\left( \op{\sum_{i=m_k+1}^{p} \frac{1}{\sqrt{p}}\uv_{j}^\top \uv_{(k),i}\lambda_{(k),i}^{1/2}\zv_{k,i} } > \epsilon \right) \leq \frac{1}{p\epsilon^2}\mathbb{E}\left( \op {\sum_{i=m_k+1}^{p} \uv_{j}^\top \uv_{(k),i} \tau_{k,i}  \zv_{k,i}}^2  \right) \\ 
    &= \frac{1}{p\epsilon^2} \mathbb{E} \left( \sum_{i=m_k+1}^{p} (\uv_{j}^\top \uv_{(k),i})^2 \tau_{k,i}^2  \zv_{k,i}^\top \zv_{k,i} \right) \leq \frac{n_k M_k^2}{p\epsilon^2} \rightarrow 0 
    \end{aligned}
\end{equation}
as $p \to \infty$ where, by Assumption~\ref{assume:2}, there exists $M_k < \infty$ such that $\tau_{k,i} \leq M_k$ for $i = m_k+1, \ldots, p$. Then we have 
\begin{align*}
    \frac{1}{\sqrt{p}}\uv_{j}^\top \Uv_{(k)}\Lambdav_{(k)}^{1/2}\Zv_{k} \xrightarrow{P} [\Rv_{k}]_j\Wv_{k}^\top
\end{align*}
as $p \to \infty$ for $k = 1, 2$. 

Next, note that for $i \in \Dc = \{1, \ldots, k_0, k_0 + (n_1-m_1), \ldots, n_1+m_2-1 \}$, $\hat\vv_i = (\hat\vv_{i,1}, \hat\vv_{i,2})^\top \xrightarrow{P} v_i(\Sv_0)$ as $p \to \infty$. Hence, for $i \in \Dc$ and $1 \leq j \leq m$, 
\begin{align*}
    \uv_{j}^\top \hat\uv_i \xrightarrow{P} \phi_i(\Sv_0)^{-1/2}\ev_j^\top \Wv^\top (\Iv_{n} - \Jv) v_i(\Sv_0)
\end{align*}
as $p \to \infty$ where $\ev_{j} \in \Real^{m}$ is a vector whose $j$th coordinate is $1$ and the other coordinates are zero. Using (\ref{eq:gik}), we have $\uv_j^\top \hat\uv_i \xrightarrow{P} D_{i,j}$ for $1\leq i\leq k_0$ and $\uv_j^\top \hat\uv_i \xrightarrow{P} D_{i-(n_1-m_1-1),j}$ for $k_0 + (n_1-m_1) \leq i \leq n_1+m_2-1$ as $p \to \infty$ where $D_{i,j}$ is defined in (\ref{eq:Dij}). In contrast, for $i \in \{1, \ldots, n-2 \} \setminus \Dc$, (\ref{eq:g01}) gives $\Wv_{1}^\top (\Iv_{n_1} - n_1^{-1}\Jv_{n_1})\hat\vv_{i,1} = o_p(1)$ and $\Wv_{2}^\top (\Iv_{n_2} - n_2^{-1}\Jv_{n_2})\hat\vv_{i,2} = o_p(1)$, which imply that $\Wv^\top (\Iv_n - \Jv)\hat\vv_i = o_p(1)$ and $\uv_j^\top \hat\uv_i \xrightarrow{P} 0$ as $p \to \infty$. 

We then conclude the proof by noting that the limit of 
\begin{align*}
    \cos{\left({\rm Angle}{(\hat\uv_i, \Uc)}\right)} =\sqrt{\sum_{j=1}^{m} (\hat\uv_i^\top \uv_{j})^2 }
\end{align*}
can be obtained by (\ref{eq:uij}).
\end{proof}

Note that Lemmas~\ref{lem:asymptotic property of Sw equal tails} and~\ref{lem:asymptotic property of Sw unequal tails} can also be used to investigate the asymptotic behavior of $\hat\uv_i^\top \dv$, where $\dv = \bar{X}_1 - \bar{X}_2$ is the sample mean difference vector. Lemma~\ref{app:lemma:md} will be used frequently in the proof of main theorems.

\begin{lemma}\label{app:lemma:md}
Suppose Assumptions \ref{assume:1}---\ref{assume:5} hold. Then conditional to $\Wv_{1}$ and $\Wv_{2}$, the following hold as $p \to \infty$.
\begin{itemize}
    \item[(i)] If $\beta_1 = \beta_2 = 1$ and $\tau_1 = \tau_2 =: \tau$, then
    \begin{align*}
    p^{-1/2} \dv^\top \hat\uv_i \xrightarrow{P} \begin{cases} 
    \sum_{j=1}^{m} r_{j} C_{i,j}, & 1 \leq i \leq m, \\ 
    0, & m + 1 \leq i \leq n -2 \end{cases}
    \end{align*}
    for $1 \leq j \leq m$ where $r_{j} := \cos\theta_{j}\delta + \sum_{k=1}^{m_1} [\Rv_{1}]_{jk}\sigma_{1,k}\bar{\zv}_{1,k} - \sum_{k=1}^{m_2}[\Rv_{2}]_{jk}\sigma_{2,k}\bar{\zv}_{2,k}$ and $C_{i,j} := \sqrt{\phi_i(\Phiv) / (\phi_i(\Phiv)+\tau^2)}v_{ij}(\Phiv)$.
    
    \item[(ii)] If $\beta_1 = \beta_2 = 1$ and $\tau_1 > \tau_2$, then
    \begin{align*}
    p^{-1/2} \dv^\top \hat\uv_i \xrightarrow{P} \begin{cases} 
    \sum_{j=1}^{m} r_{j} D_{i,j}, & 1 \leq i \leq k_0,  \\ 
    0, & k_0+1 \leq i \leq k_0 + (n_1-m_1-1), \\
    \sum_{j=1}^{m} r_{j} D_{i-(n_1-m_1-1),j}, & k_0 + (n_1-m_1) \leq i \leq n_1+m_2-1, \\
    0, & n_1+m_2 \leq i \leq n-2
    \end{cases}
    \end{align*}
    for $1 \leq j \leq m$ where $k_0$ is defined in Lemma~\ref{lem:asymptotic property of Sw unequal tails} (i), $r_j$ is defined in Lemma~\ref{app:lemma:md} (i) and $D_{i,j}$ is defined in (\ref{eq:Dij}).
\end{itemize}

\end{lemma}

\begin{proof}
Observe that $\dv = \bar{X}_1 - \bar{X}_2 = \muv + \frac{1}{n_1}\Uv_{(1)}\Lambdav_{(1)}^{1/2}\Zv_{1} \1v_{n_1} - \frac{1}{n_2}\Uv_{(2)}\Lambdav_{(2)}^{1/2}\Zv_{2} \1v_{n_2}$ and
\begin{align*}
    \frac{1}{\sqrt{p}}\dv^\top \hat\uv_i = \frac{1}{\sqrt{p}} \muv^\top \hat\uv_i + \frac{1}{n_1 \sqrt{p}}\1v_{n_1}^\top \Zv_{1}^\top \Lambdav_{(1)}^{1/2}\Uv_{(1)}^\top \hat\uv_i - \frac{1}{n_2 \sqrt{p}} \1v_{n_2}^\top \Zv_{2}^\top \Lambdav_{(2)}^{1/2}\Uv_{(2)}\hat\uv_i.
\end{align*}
Using (\ref{eq:app:svd}), we can write
\begin{align*}
    \frac{1}{\sqrt{p}}\muv^\top \hat\uv_i = \left(\frac{\hat\lambda_i}{p}\right)^{-1/2} \left( \frac{\muv^\top \Uv_{(1)}\Lambdav_{(1)}^{1/2}\Zv_{1}}{p},~\frac{\muv^\top \Uv_{(2)}\Lambdav_{(2)}^{1/2}\Zv_{2}}{p}\right) (\Iv_{n} - \Jv)\hat\vv_{i}.
\end{align*}
Write $\cv = (\cos\theta_1, \ldots, \cos\theta_m)^\top \in \Real^{m}$ and $\cv_k = (\cos\theta_{k,1}, \ldots, \cos\theta_{k, m_k})^\top \in \Real^{m_k}$. Then we have $p^{-1}\muv^\top \Uv_{(k)}\Lambdav_{(k)}^{1/2}\Zv_{k} \xrightarrow{P} \cv_k^\top \Wv_{k}^\top \delta$ as $p \to \infty$ from Lemma~\ref{app:lemma:prelim} (ii). Thus,
\begin{equation}\label{eq:app:lemma:md:a}
    \begin{aligned}
        \frac{1}{\sqrt{p}}\muv^\top \hat\uv_i \xrightarrow{P} \begin{cases}
        \phi_i(\Sv_0)^{-1/2} \delta \cv^\top \Wv^\top (\Iv_{n} - \Jv) v_i(\Sv_0), & i \in \Dc, \\
        0, & o.w
        \end{cases}
    \end{aligned}
\end{equation}
as $p \to \infty$. (recall the definition of $\Dc$ in Table~\ref{table:D}). Also, combining Lemma~\ref{app:lemma:prelim} (iv) and Lemma~\ref{app:lemma:dual} gives
\begin{equation}\label{eq:app:lemma:md:b}
    \begin{aligned}
    \frac{1}{n_k\sqrt{p}}\1v_{n_k}^\top \Zv_{k}^\top \Lambdav_{(k)}^{1/2}\Uv_{(k)}^\top \hat\uv_i \xrightarrow{P} \begin{cases}
        \phi_i(\Sv_0)^{-1/2}n_k^{-1}\1v_{n_k}^\top \Wv_{k}\Rv_{k}^\top \Wv^\top (\Iv_{n} - \Jv) v_i(\Sv_0), & i \in \Dc, \\
        0, & o.w
    \end{cases}
    \end{aligned}
\end{equation}
as $p \to \infty$ for $k = 1, 2$. Hence, combining (\ref{eq:app:lemma:md:a}) and (\ref{eq:app:lemma:md:b}) gives
\begin{align*}
    &\frac{1}{\sqrt{p}}\dv^\top\hat\uv_i \xrightarrow{P}
    \phi_i(\Sv_0)^{-1/2} (\delta\cv^\top + n_1^{-1}\1v_{n_1}^\top \Wv_{1}\Rv_{1}^\top - n_2^{-1}\1v_{n_2}^\top \Wv_{2}\Rv_{2}^\top) \Wv^\top (\Iv_{n} - \Jv)v_i(\Sv_0)
\end{align*}
for $i \in \Dc$ and $p^{-1/2}\dv^\top \hat\uv_i \xrightarrow{P} 0$ otherwise as $p \to \infty$.
\end{proof}

\subsection{Proof of Theorem~\ref{thm:test data piling}}
\begin{proof}
We provide the proof for the case of $\beta_1 = \beta_2 =1$ and $\tau_1 > \tau_2$. The other cases can be shown in a similar manner. For $Y \in \Yc$, assume $\pi(Y) = 1$. Recall that in this case, 
\begin{align*}
    \Dc = \left\{i : 1 \leq i \leq k_0, k_0+(n_1-m_1) \leq i \leq n_1 + m_2-1 \right\}
\end{align*}
where $k_0$ is defined in Lemma~\ref{lem:asymptotic property of Sw unequal tails} (i) and $\Sc = {\rm span}(\left\{\hat\uv_i\right\}_{i \in \Dc}, \mdp)$. For notational simplicity, we write $\Dc = \left\{i_1, \ldots, i_{m_1+m_2} \right\}$ so that $i_l < i_{l'}$ if $l < l'$.  Let $\tv^0 = (t_1, \ldots, t_m)^\top \in \Real^{m}$ with 
\begin{align}\label{eq:tj}
    t_j = \eta_2 \cos\theta_j \delta + \sum_{k=1}^{m_1} [\Rv_{1}]_{jk}\sigma_{1,k}(\zeta_k - \eta_1 \bar{\zv}_{1,k}) - \eta_2 \sum_{k=1}^{m_2} [\Rv_{2}]_{jk}\sigma_{2,k}\bar{\zv}_{2,k}
\end{align}
for $1 \leq j \leq m$ and $\nuv^0 = \Uv_{1,\Sc} \tv^{0} + \nu_1 \mdp + \bar{X}_{\Sc}$. Note that $\nuv^0 \in \Lc_1$. We claim that $\|Y_{\Sc} - \nuv^0 \|  \xrightarrow{P} 0$ as $p \to \infty$. For this, we need to show that (a) $\hat\uv_i^\top(Y_\Sc - \nuv^0) \xrightarrow{P} 0$ for $1 \leq i \leq m$ and (b) $\mdp^\top (Y_\Sc - \nuv^0) \xrightarrow{P} 0$ as $p \to \infty$. 

First, we show that (a) $\hat\uv_i^\top(Y_\Sc - \nuv^0) = p^{-1/2} \hat\uv_i^\top (Y - \bar{X}) - \hat\uv_i^\top \Uv_{1,\Sc}\tv^0 \xrightarrow{P} 0$ for $1 \leq i \leq m$ as $p \to \infty$. Note that
\begin{equation}\label{eq:thm:L1}
    \begin{aligned}
    &\frac{1}{\sqrt{p}}\hat\uv_i^\top (Y - \bar{X}) =\frac{1}{\sqrt{p}}\hat\uv_i^\top \left\{\eta_2 \muv + \Uv_{(1)}\Lambdav_{(1)}^{1/2}\left(\zetav - \frac{1}{n}\Zv_{1}\1v_{n_1}\right) - \frac{1}{n} \Uv_{(2)}\Lambdav_{(2)}^{1/2}\Zv_{2}\1v_{n_2}\right\} \\
    &= \frac{\eta_2}{\sqrt{p}}\hat\uv_i^\top \muv + \sum_{k=1}^{m_1}\hat\uv_i^\top \uv_{(1),k}\sigma_{1,k}(\zeta_k - \eta_1 \bar{\zv}_{1,k}) - \eta_2 \sum_{k=1}^{m_2} \hat\uv_i^\top \uv_{(2),k}\sigma_{2,k}\bar{\zv}_{2,k} + o_p(1).
    \end{aligned}
\end{equation}
Using (\ref{eq:uij}) and (\ref{eq:app:lemma:md:a}), we have
\begin{equation}\label{eq:thm:L1:a+b:ne}
    \begin{aligned}
        \frac{1}{\sqrt{p}}\hat\uv_{i_l}^\top (Y - \bar{X}) \xrightarrow{P} \sum_{j=1}^{m} t_j D_{l,j}
    \end{aligned}
\end{equation}
for $1 \leq l \leq m_1+m_2$ and $1 \leq j \leq m$ where $D_{l,j}$ is defined in (\ref{eq:Dij}), and 
\begin{align*}
    \hat\uv_{i_l}^\top \Uv_{1, \Sc}\tv^0 \xrightarrow{P}\sum_{j=1}^{m} t_j D_{l,j}
\end{align*}
as $p \to \infty$. Hence, 
\begin{align*}
    \hat\uv_{i_l}^\top(Y_{\Sc} - \nuv^0) = p^{-1/2}\hat\uv_{i_l}^\top (Y - \bar{X}) - \hat\uv_{i_l}^\top \Uv_{1,\Sc}\tv^0 \xrightarrow{P} 0
\end{align*}
as $p \to \infty$ for $1 \leq l \leq m_1+m_2$.

Next, we show that (b) $\mdp^\top (Y_\Sc - \nuv^0) = p^{-1/2} \mdp^\top (Y - \bar{X}) - \mdp^\top \Uv_{1,\Sc}\tv^0 - \nu_1 \xrightarrow{P} 0$ as $p \to \infty$. We decompose $p^{-1/2}\mdp^\top (Y - \bar{X})$ into the two terms:
\begin{align*}
    \frac{1}{\sqrt{p}}\mdp^\top (Y - \bar{X}) &= \frac{\sqrt{p}}{\|\hat\Uv_2 \hat\Uv_2^\top \dv \|} \left(\frac{\dv^\top (Y - \bar{X})}{p} - \frac{(\hat\Uv_1 \hat\Uv_1^\top \dv)^\top(Y - \bar{X})}{p} \right) \\
    &=\frac{1}{\kappa_{\textup{MDP}}}(K_1 - K_2)
\end{align*}
where $K_1 = \dv^\top(Y - \bar{X}) / p$ and $K_2 = (\hat\Uv_1\hat\Uv_1^\top \dv)^\top(Y - \bar{X})/p$. By Lemmas~\ref{app:lemma:prelim} and~\ref{app:lemma:dual}, 
\begin{equation}\label{eq:K1limit}
\begin{aligned}
    K_1 &= \frac{1}{p}\left(\muv + \frac{1}{n_1}\Uv_{(1)}\Lambdav_{(1)}^{1/2}\Zv_{1}\1v_{n_1} - \frac{1}{n_2}\Uv_{(2)}\Lambdav_{(2)}^{1/2}\Zv_{2}\1v_{n_2} \right)^\top \\
    &\quad\left\{\eta_2 \muv + \Uv_{(1)}\Lambdav_{(1)}^{1/2}\left(\zeta - \frac{1}{n} \Zv_{1}\1v_{n_1}\right) - \frac{1}{n} \Uv_{(2)}\Lambdav_{(2)}^{1/2}\Zv_{2}\1v_{n_2} \right\} \\
    &\xrightarrow{P} \eta_2(1- \cos^2\varphi)\delta^2 - \frac{\tau_1^2 - \tau_2^2}{n} + \sum_{j=1}^{m} t_jr_{j} 
\end{aligned}
\end{equation}
as $p \to \infty$ where $r_{j}$ is defined in Lemma~\ref{app:lemma:md}. Using Lemma~\ref{app:lemma:md}, we also obtain
\begin{equation}\label{eq:K2limit}
    \begin{aligned}
    K_2 &= \frac{(\hat\Uv_1 \hat\Uv_1^\top \dv)^\top (Y - \bar{X})}{p} = \sum_{i \in \Dc} \left(\frac{1}{\sqrt{p}}\hat\uv_i^\top \dv \right) \left(\frac{1}{\sqrt{p}} \hat\uv_i^\top (Y - \bar{X})\right) + o_p(1) \\
    &\xrightarrow{P}\sum_{l=1}^{m_1+m_2}  \sum_{j=1}^{m}\sum_{j'=1}^{m}  t_j r_{j'} D_{l,j}D_{l,j'}.
    \end{aligned}
\end{equation}
as $p \to \infty$. Note that the limit of $\kappa_{\textup{MDP}}^2$ can be obtained from the limit of $p^{-1}\|\dv \|^2$ and $p^{-1}\|\hat\Uv_1\hat\Uv_1^\top \dv  \|^2$:
\begin{equation}\label{eq:kMDPlimit}
    \begin{aligned}
    &\kappa_{\textup{MDP}}^2 = \frac{1}{p}\|\dv\|^2 - \frac{1}{p}\|\hat\Uv_1 \hat\Uv_1^\top \dv \|^2 \\
    &\xrightarrow{P} (1-\cos^2\varphi)\delta^2 + \frac{\tau_1^2}{n_1} + \frac{\tau_2^2}{n_2} + \sum_{j=1}^{m}\sum_{j'=1}^{m} r_{j}r_{j'}\left(I_{jj'} - \sum_{l=1}^{m_1+m_2} D_{l,j}D_{l,j'}\right) \\
    &= (1-\cos^2\varphi)\delta^2 + \frac{\tau_1^2}{n_1} + \frac{\tau_2^2}{n_2} + \rv^\top \left(\Iv_{m} - \begin{pmatrix} \Rv_{1}\Phiv_{1}^{1/2}~\Rv_{2}\Phiv_{2}^{1/2}\end{pmatrix} \Phiv_{\tau_1,\tau_2}^{-1}  \begin{pmatrix}\Phiv_{1}^{1/2} \Rv_{1}^\top \\ \Phiv_{2}^{1/2}\Rv_{2}^\top \end{pmatrix} \right)\rv \\
    &=: \kappa^2
    \end{aligned}
\end{equation}
as $p \to \infty$ where $\rv = (r_{1}, \ldots, r_{m})^\top$ and $I_{jj'} = I(j = j')$. Note that $\kappa^2 \ge (1-\cos^2\varphi)\delta^2 + \tau_1^2 / n_1 + \tau_2^2 / n_2 > 0$. Combining (\ref{eq:K1limit}), (\ref{eq:K2limit}) and (\ref{eq:kMDPlimit}) gives
\begin{equation}\label{eq:L1:MDP:a:2}
    \begin{aligned}
        &\frac{1}{\sqrt{p}}\mdp^\top (Y - \bar{X}) =\frac{1}{\kappa_{\textup{MDP}}}(K_1 - K_2) \\
        &\xrightarrow{P} \frac{1}{\kappa} \left\{\eta_2(1-\cos^2\varphi)\delta^2 - \frac{\tau_1^2 - \tau_2^2}{n} + \sum_{j=1}^m \sum_{j'=1}^{m} t_j r_{j'} \left(I_{jj'} - \sum_{l=1}^{m_1+m_2} D_{l,j}D_{l,j'}  \right) \right\}
    \end{aligned}
\end{equation}
as $p \to \infty$. Also, Lemma~\ref{app:lemma:md} and (\ref{eq:uij}) lead to
\begin{equation}\label{eq:L1:MDP:b:2}
    \begin{aligned}
        \mdp^\top\Uv_{1, \Sc}\tv^0 &= \sum_{j=1}^{m} t_j \mdp^\top \uv_j = \frac{1}{\kappa_{\textup{MDP}}}\sum_{j=1}^{m} t_j \left\{\frac{1}{\sqrt{p}}\uv_j^\top \dv - \uv_j^\top \hat\Uv_1 \left( \frac{1}{\sqrt{p}}\hat\Uv_1^\top \dv \right) \right\} \\
        &\xrightarrow{P} \frac{1}{\kappa}\sum_{j=1}^m \sum_{j'=1}^{m} t_j r_{j'} \left(I_{jj'} - \sum_{l=1}^{m_1+m_2} D_{l,j}D_{l,j'}\right)
    \end{aligned}
\end{equation}
as $p \to \infty$. From~(\ref{eq:L1:MDP:a:2}) and (\ref{eq:L1:MDP:b:2}), we have $\mdp^\top(Y_{\Sc} - \nuv^0) \xrightarrow{P} 0$ as $p \to \infty$. Hence, from (a) and (b), we have $\|Y_\Sc - \nuv^0 \| \xrightarrow{P} 0$ as $p \to \infty$ for $Y \in \Yc$ with $\pi(Y) = 1$. Using similar arguments, we can show for $Y \in \Yc$ with $\pi(Y) = 2$.
\end{proof}

\begin{subsection}{Proof of Theorem~\ref{thm:SMDP piling distance}}
\begin{proof}
We first provide the proof for the case of $\beta_1 = \beta_2 =1$ and $\tau_1 > \tau_2$. As in the proof of Theorem~\ref{thm:test data piling}, for notational simplicity, we write $\Dc = \left\{i_1, \ldots, i_{m_1+m_2} \right\}$ so that $i_l < i_{l'}$ if $l < l'$. Write
\begin{align*}
    \hat{\Vv}_1 = [\hat\uv_{i_1}, \ldots, \hat\uv_{i_{m_1+m_2}}]
\end{align*}
and 
\begin{align*}
    \tilde{\Vv} = [\hat{\Vv}_1,~\mdp]
\end{align*}
so that the columns of $\tilde{\Vv}$ form an orthonormal basis of $\Sc$. Also, write 
\begin{align*}
    \Kv = \hat\Vv_1^\top \Uv_1 \in \Real^{(m_1+m_2) \times m}
\end{align*}
and 
\begin{align*}
    \kv^\top = \mdp^\top \Uv_1 \in \Real^{1 \times m}.
\end{align*}
From (\ref{eq:uij}), we get $\Kv \xrightarrow{P} \tilde{\Omegav}_{1}$ as $p \to \infty$, which is a $(m_1+m_2) \times m$ matrix such that
\begin{align}\label{eq:tildeOmega1}
    [\tilde{\Omegav}_1]_{i,j} = D_{i,j}
\end{align}
for $1 \leq i \leq m_1+m_2$ and $1 \leq j \leq m$ where $D_{i,j}$ is defined in (\ref{eq:Dij}). Also, from (\ref{eq:L1:MDP:b:2}), we get 
\begin{align}\label{eq:tildeOmega12}
    \kv \xrightarrow{P} \omegav_1 = \frac{1}{\kappa}(\Iv_m - \tilde{\Omegav}_1^\top \tilde{\Omegav}_1)\rv \in \Real^m
\end{align}
where $\rv = (r_1, \ldots, r_m)^\top$ with $r_j$ defined in Lemma~\ref{app:lemma:md}. Lastly, write 
\begin{align}\label{eq:tildeOmega}
    \tilde{\Omegav} = [\tilde{\Omegav}_1^\top,~\omegav_1]^\top.
\end{align}
Note that $\tilde{\Omegav}_1$ and $\tilde{\Omegav}$ are of rank $m$.

(i) For any given $\{\wv\} \in \Tc$, $\wv \in \Tc_p = {\rm span}(\{\hat\uv_i\}_{i \in \Dc}) \cap {\rm span}(\Uv_{1,\Sc}^\perp)$ for each $p$. That is, $\wv \in \Tc_p$ can be expressed as $\wv = \hat{\Vv}_1 \av_1$ for some $\av_1 \in \Real^{m_1+m_2}$ such that $\Kv^\top \av_1 = \0v_m$. Hence, $\av_1^\top [\tilde{\Omegav}_{1}]^j = o_p(1)$ for all $1 \leq j \leq m$. For $Y \in \Yc$, assume $\pi(Y) = 1$. From (\ref{eq:thm:L1:a+b:ne}), we have
\begin{align*}
    \frac{1}{\sqrt{p}} \wv ^\top (Y - \bar{X}) = \av_1^\top \frac{1}{\sqrt{p}}\hat{\Vv}_1^\top (Y - \bar{X}) = \sum_{j=1}^{m} t_{j} \av_{1}^\top [\tilde{\Omegav}_1]^j + o_p(1) \xrightarrow{P} 0
\end{align*}
as $p \to \infty$ where $t_j$ is defined in (\ref{eq:tj}). Similarly, we can show the same result for $Y \in \Yc$ with $\pi(Y) = 2$. 

(ii) Recall that $\Sc = {\rm span}(\Uv_{1,\Sc}) \oplus \Tc_p \oplus {\rm span}(\fv_0)$. Write 
\begin{align*}
    \fv_0 = \tilde\Vv\av = \hat{\Vv}_1\av_{0,1} + a_{0, \textup{MDP}}\mdp
\end{align*}
where $\av = (\av_{0,1}, a_{0, \textup{MDP}})^\top \in \Real^{m_1+m_2+1}$ with $\av_{0,1} \in \Real^{m_1+m_2}$ and $a_{0, \textup{MDP}} \in \Real$. Since $\fv_0$ is orthogonal to $\Tc_p = {\rm span}(\{\hat\uv_i\}_{i \in \Dc}) \cap {\rm span}(\Uv_{1,\Sc}^\perp)$, we have $\av_{0,1} = \Kv \pv$ for some $\0v_m \ne \pv \in \Real^{m}$. Also, since $\fv_0$ is orthogonal to ${\rm span}(\Uv_{1, \Sc})$, we have 
\begin{align*}
    \Uv_1^\top \fv_0 =  \Kv^\top \av_{0,1} + \kv a_{0, \textup{MDP}} = \Kv^\top \Kv \pv + \kv a_{0, \textup{MDP}} = \0v_m.
\end{align*}
Lastly, since $\fv_0$ is an unit vector, $\norm{\av}^2 =  \norm{\Kv \pv}^2 + a_{0, \textup{MDP}}^2 = 1$. Combining these, we have
\begin{align*}
    \av_{0,1} = \Kv\pv = -\Kv(\Kv^\top \Kv)^{-1}\kv a_{0,\textup{MDP}}
\end{align*}
and 
\begin{align*}
    a_{0,\textup{MDP}}^2 = \{\kv^\top (\Kv^\top \Kv)^{-1} \kv +1 \}^{-1}.
\end{align*}
Moreover, by (\ref{eq:tildeOmega1}) and (\ref{eq:tildeOmega12}), it holds that
\begin{align*}
     \av_{0,1} &= -\Kv(\Kv^\top \Kv)^{-1}\kv a_{0,\textup{MDP}} \\
     &\xrightarrow{P} {\psiv}_{0,1} := \frac{\psi_{0,\textup{MDP}}}{\kappa} \tilde{\Omegav}_1 (\Iv_{m} - (\tilde{\Omegav}_1^\top\tilde{\Omegav}_1)^{-1} )\rv
\end{align*}
as $p \to \infty$ where $\psi_{0, \textup{MDP}}$ is the probability limit of $a_{0, \textup{MDP}}$ defined in the next equation:
\begin{equation}\label{eq:app:psi0MDP}
    \begin{aligned}
         a_{0,\textup{MDP}} &= \{\kv^\top (\Kv^\top \Kv)^{-1} \kv +1 \}^{-1/2} \\
        &\xrightarrow{P} \psi_{0,\textup{MDP}} := \frac{\kappa}{\sqrt{\kappa^2 + \rv^\top (\Iv_{m} - \tilde{\Omegav}_1^\top\tilde{\Omegav}_1) (\tilde{\Omegav}_1^\top\tilde{\Omegav}_1)^{{-1}} (\Iv_{m} - \tilde{\Omegav}_1^\top\tilde{\Omegav}_1)\rv}}.
    \end{aligned}
\end{equation}
Note that $\psiv_0 := (\psiv_{0,1}^\top, \psi_{0, \textup{MDP}})^\top$ satisfies ${\psiv}_0^\top [\tilde{\Omegav}]^j = 0$ for all $1 \leq j \leq m$ where $\tilde{\Omegav}$ is defined in (\ref{eq:tildeOmega}). Now, for $Y \in \Yc$, assume $\pi(Y) = 1$. Combining (\ref{eq:thm:L1:a+b:ne}) and (\ref{eq:L1:MDP:a:2}) gives
\begin{align*}
    &\frac{1}{\sqrt{p}}\fv_0^\top (Y - \bar{X}) = \av_{0,1}^\top \frac{1}{\sqrt{p}}\hat{\Vv}_1^\top (Y - \bar{X}) + a_{0,\textup{MDP}}\frac{1}{\sqrt{p}}\mdp^\top (Y - \bar{X}) \\
    &\xrightarrow{P} \sum_{j=1}^{m}t_{j} {\psiv}_{0}^\top [\tilde{\Omegav}]^{j} + \frac{\psi_{0,\textup{MDP}}}{\kappa}\left\{\eta_2(1-\cos^2\varphi)\delta^2 - \frac{\tau_1^2 - \tau_2^2}{n} \right\} \\
    &=\frac{\psi_{0,\textup{MDP}}}{\kappa}\left\{\eta_2(1-\cos^2\varphi)\delta^2 - \frac{\tau_1^2 - \tau_2^2}{n} \right\} =: \upsilon_0\left\{\eta_2(1-\cos^2\varphi)\delta^2 - \frac{\tau_1^2 - \tau_2^2}{n} \right\}
\end{align*}
as $p \to \infty$ where $t_j$ is defined in (\ref{eq:tj}) and 
\begin{align}\label{eq:app:upsilon0}
    \upsilon_0 := \frac{\psi_{0,\textup{MDP}}}{\kappa} = \left\{\kappa^2 + \rv^\top (\Iv_{m} - \tilde{\Omegav}_1^\top\tilde{\Omegav}_1) (\tilde{\Omegav}_1^\top\tilde{\Omegav}_1)^{{-1}} (\Iv_{m} - \tilde{\Omegav}_1^\top\tilde{\Omegav}_1)\rv\right\}^{-1/2}    
\end{align}
is a strictly positive random variable. Similarly, we can show that $p^{-1/2}\fv_0^\top (Y - \bar{X}) \xrightarrow{P} \upsilon_0 \{-\eta_1(1-\cos^2\varphi)\delta^2 - (\tau_1^2 - \tau_2^2)/n\}$ as $p \to \infty$ for $Y \in \Yc$ with $\pi(Y) = 2$.

Following the above arguments, we can also prove the same results for the case where $\beta_1 = \beta_2 = 1$ and $\tau_1 = \tau_2 =: \tau$ (with $\Dc = \{1, \ldots, m \}$ and $\hat\Vv_1 = [\hat\uv_1, \ldots, \hat\uv_m]$). We remark that, in this case, 
\begin{align*}
    \hat\uv_i^\top \uv_j \xrightarrow{P} [\tilde{\Omegav}_1]_{i,j}= C_{i,j} = \sqrt{\frac{\phi_i(\Phiv)}{\phi_i(\Phiv) + \tau^2}} v_{ij}(\Phiv)
\end{align*}
as $p \to \infty$ (see Lemma~\ref{lem:asymptotic property of Sw equal tails} and this can be obtained in a similar manner to $D_{i,j}$ in the proof of Lemma~\ref{lem:asymptotic property of Sw unequal tails}). Then, from (\ref{eq:app:upsilon0}), it can be checked that
\begin{align}\label{eq:app:upsilon0:equal tails}
    \upsilon_0 =  \kappa^2  + \sum_{i=1}^{m} \frac{\tau^4}{\phi_i(\Phiv)(\phi_i(\Phiv) + \tau^2)} \left(\sum_{j=1}^{m}r_{j} v_{ij}(\Phiv) \right)^2.
\end{align}
Later, in the proof of Theorem~\ref{thm:ridge equal tail eigenvalues piling distance}, we will show that the asymptotic test data piling distance of $\fv_0$, $D(\fv_0) = \upsilon_0(1-\cos^2\varphi)\delta^2$, is the same as that of the projected ridged linear discriminant vector $\vv_{\hat\alpha}$ in Theorem~\ref{thm:ridge equal tail eigenvalues piling distance}.
\end{proof}
\end{subsection}

\begin{subsection}{Proof of Theorem~\ref{thm:SMDP characterization}}
\begin{proof}
We continue to use the same notations as in the proof of Theorem~\ref{thm:SMDP piling distance}. We assume $\beta_1 = \beta_2 = 1$ and $\tau_1 > \tau_2$. The other cases can be shown in a similar manner.

(i) For each $p$, let $\left\{\fv_i\right\}_{i=1}^{m_1+m_2-m}$ forms an orthonormal basis of $\Tc_p$. Also, choose $\{\gv_{i} \}_{i=1}^{m}$ so that $\{\fv_0 \} \cup \{\fv_i\}_{i=1}^{m_1+m_2-m} \cup \{\gv_{i}\}_{i=1}^{m}$  forms an orthonormal basis of $\Sc$. Then $\{\fv_0 \} \cup \{\fv_i\}_{i=1}^{m_1+m_2-m} \cup \{\gv_{i}\}_{i=1}^{m} \cup \{\hat\uv_i \}_{i \in \{1, \ldots, n-2 \} \setminus \Dc}$ forms an orthonormal basis of $\Sc_{\Xc}$. 

Write $\Bc_p = {\rm span}(\fv_0) \oplus \Tc_p\oplus {\rm span}(\left\{\hat\uv_i \right\}_{i \in \left\{1, \ldots, n-2 \right\} \setminus \Dc})$. For any given $\left\{\wv \right\} \in \Ac$, write $\wv = a_0\fv_0 + \sum_{i=1}^{m_1+m_2-m}a_i\fv_i + \sum_{i=1}^{m}b_i\gv_i +\sum_{i \in \left\{1, \ldots, n-2 \right\}\setminus\Dc}c_i\hat\uv_i$. Note that for all $0 \leq i \leq m_1+m_2-m$, $\fv_i$ is orthogonal to $\uv_j$ for $1 \leq j \leq m$. Also, from (\ref{eq:uij}) we have $\hat\uv_i^\top \uv_j \xrightarrow{P} 0$ as $p \to \infty$ for $i \in \{1, \ldots, n-2\} \setminus \Dc$ and $1 \leq j \leq m$. Using Lemma 3.3 (i) of \citet{Chang2021}, it can be shown that $\{\wv\} \in \Ac$ is equivalent to $\wv^\top \uv_j \xrightarrow{P} 0$ as $p \to \infty$ for $1 \leq j \leq m$. Hence, $b_i = o_p(1)$ for $1 \leq i \leq m$. Let $\{ \vv \} \in \Bc_p$ with $\vv = P_{\Bc_p} \wv / \| P_{\Bc_p}\wv \| \in \Bc_p$ for all $p$. Then $\| \wv - \vv\|  \xrightarrow{P} 0$ since $\|P_{\Bc_p}\wv \|^2 = \| \wv\|^2 - \sum_{i=1}^{m}b_i^2 \xrightarrow{P} 1$ as $p \to \infty$. 

(ii) Theorem~\ref{thm:SMDP piling distance} tells that $D(\fv_i) = 0$ for all $1 \leq i \leq m_1+m_2-m$ while $D(\fv_0) > 0$. Also, since $p^{-1/2}\wv^\top \muv \xrightarrow{P} D(\wv)$ as $p \to \infty$ for any $\{\wv \} \in \Ac$ such that $D(\wv)$ exists, (\ref{eq:app:lemma:md:a}) leads to $D(\hat\uv_i) = 0$ for $i \in \left\{1, \ldots, n-2 \right\} \setminus \Dc$. Then $p^{-1/2}\wv^\top(Y_1 - Y_2) \leq \abs{a_0}D(\fv_0) + o_p(1)$ for $Y_i \in \Yc$ with $\pi(Y_i) = i$ $(i = 1, 2)$. Now $\abs{a_0} \leq 1$ implies that $D(\wv) \leq D(\fv_0)$ and the equality holds if and only if $a_0 \xrightarrow{P} 1$ as $p \to \infty$.
\end{proof}
\end{subsection}

\begin{subsection}{Proof of Theorem~\ref{thm:scatter of projected test data}}
To distinguish between the notations for the training dataset $\Xc$ and the independent test dataset $\Yc$ in Section~\ref{sec:data-splitting approach}, we use a superscript for the latter. Specifically, any quantity defined with a superscript * refers to the independent test dataset $\Yc$, whereas quantities without the superscript pertain to the training dataset $\Xc$. For example, $\Sv_W^*$ is the within-class scatter matrix of $\Yc$ and $\hat\uv_i^*$ is the $i$th eigenvector of $\Sv_W^*$. Also, $\Dc^* \subset \{1, \ldots, n^*-2 \}$ is the index set that is defined analagously to $\Dc$ in Table~\ref{table:D} in Section~\ref{sec:test data piling signal subspace} for the independent test dataset $\Yc$. 

We assume $\beta_1 = \beta_2 = 1$ and $\tau_1 > \tau_2$. The other cases can be shown in a similar manner to this case but are much simpler. For notational simplicity, we write $\Dc = \left\{i_1, \ldots, i_{m_1+m_2}\right\}$ and ${\Dc}^* = \left\{j_1, \ldots, j_{m_1+m_2} \right\}$ so that $i_l < i_{l'}$ and $j_l < j_{l'}$ if $l < l'$.

\begin{proof}
First, we will find the probability limit of $\hat\uv_{i}^\top \hat\uv_{j}^*$ for $1 \leq i \leq n-2$ and $1 \leq j \leq n^*-2$. Using (\ref{eq:app:svd}) and Lemmas~\ref{lem:asymptotic property of Sw unequal tails} and \ref{app:lemma:dual}, we have
\begin{align*}
    &\hat\uv_i^\top \hat\uv_j^* \\
    &\xrightarrow{P} \begin{cases} (\phi_i(\Sv_0)\phi_j(\Sv_0^*))^{-1/2} \{\Wv^\top (\Iv_{n} - \Jv)v_{i}(\Sv_0)\}^\top \{\Wv^{* \top} (\Iv_{n^*} - \Jv^*) v_{j}(\Sv_0^*)\}, & i \in \Dc, j \in \Dc^* \\
    0, & o.w
    \end{cases}
\end{align*}
as $p \to \infty$ where $\Sv_0^*$ is the probability limit of $p^{-1}(\Yv - \bar{\Yv})^\top(\Yv - \bar{\Yv})$, which is the scaled dual matrix of $\Sv_W^*$ (see Lemma~\ref{app:lemma:dual}). To be specific, for $i_l \in \Dc$ and $j_{l'} \in {\Dc}^*$ $(1 \leq l, l' \leq m_1+m_2)$, we have
\begin{equation}\label{eq:app:sampleevec:2}
    \begin{aligned}
        \hat\uv_{i_l}^\top \hat\uv_{j_{l'}}^* \xrightarrow{P}&~  
            \sum_{k=1}^{m} D_{l,k}D_{l',k}^*
    \end{aligned}
\end{equation}
as $p \to \infty$ where $D_{l,k}$ is defined in (\ref{eq:Dij}) and $D_{l',k}^*$ is defined analogously:
\begin{align*}
    D_{l',k}^* = \frac{1}{\sqrt{\phi_{l'}(\Phiv_{\tau_1, \tau_2}^*)}} \sum_{i=1}^{2} [\Rv_{i}]_k{\Phiv_i^*}^{1/2} \tilde{v}_{l'i}(\Phiv_{\tau_1, \tau_2}^*)
\end{align*}
for $k = 1, \ldots, m$. 

Next, to obtain the probability limit of $\mdp^\top \hat\uv_{j}^*$, note that 
\begin{align*}
        \mdp^\top \hat\uv_j^* &= \hat\uv_j^{* \top} \frac{\hat\Uv_2 \hat\Uv_2^\top \dv}{\|\hat\Uv_2 \hat\Uv_2^\top \dv \|} = \frac{1}{\kappa_{\textup{MDP}}} \left(\frac{1}{\sqrt{p}}\hat\uv_j^{* \top} \dv - \frac{1}{\sqrt{p}} \hat\uv_j^{* \top}\hat\Uv_1 \hat\Uv_1^\top \dv \right).
\end{align*}
Using similar arguments to the proof of Lemma~\ref{app:lemma:md}, for $j_{l'} \in {\Dc}^*$, we have
\begin{equation}\label{eq:app:sampleK1:2}
    \begin{aligned}
        \frac{1}{\sqrt{p}}\hat\uv_{j_{l'}}^{* \top} \dv \xrightarrow{P} \sum_{k=1}^{m} r_k D^*_{l',k}
    \end{aligned}
\end{equation}
and 
\begin{equation}\label{eq:app:sampleK2:2}
    \begin{aligned}
      \frac{1}{\sqrt{p}}\hat\uv_{j_{l'}}^{* \top} \hat\Uv_1 \hat\Uv_1^\top \dv &= \sum_{i \in \Dc}(\hat\uv_{j_{l'}}^{* \top}\hat\uv_i)\left(\frac{1}{\sqrt{p}}\hat\uv_i^\top \dv \right) + o_p(1)\\
      &\xrightarrow{P}  \sum_{i=1}^{m_1+m_2} \sum_{k=1}^{m} \sum_{k'=1}^{m} r_{k'}D_{i,k}D_{i,k'} D_{l',k}^* 
    \end{aligned}
\end{equation}
as $p \to \infty$ where $r_k$ is defined in Lemma~\ref{app:lemma:md}. Combining (\ref{eq:app:sampleK1:2}) and (\ref{eq:app:sampleK2:2}) gives
\begin{equation}\label{eq:app:angMDP:2}
    \begin{aligned}
        \mdp^\top \hat\uv_{j_{l'}}^* \xrightarrow{P} \frac{1}{\kappa}  \sum_{k=1}^{m}\sum_{k'=1}^{m} \left(I_{kk'} - \sum_{i=1}^{m_1+m_2} D_{i,k}D_{i,k'}\right) r_{k'} D_{l',k}^*
    \end{aligned}
\end{equation}
as $p \to \infty$ where $I_{kk'} = I(k = k')$. In contrast, for $j \notin {\Dc}^*$, we can show that $\mdp^\top \hat\uv_j^* \xrightarrow{P} 0$ as $p \to \infty$.

Now, let $\xi_{i,j}$ be the probability limit of $\hat\uv_{i}^\top \hat\uv_{j}^*$ and $\xi_{\textup{MDP},j}$ be the probability limit of $\mdp^\top \hat\uv_{j}^*$, and write 
\begin{align*}
    \xiv_{j} = (\xi_{1,j}, \ldots, \xi_{n-2,j}, \xi_{\textup{MDP}, j})^\top 
    \in \Real^{n-1}
\end{align*}
for $1 \leq j \leq n-2$. Also, write
\begin{align*}
    \Vv = [\hat\uv_1, \ldots, \hat\uv_{n-2}, \mdp] \in \Real^{p \times (n-1)}
\end{align*}
and denote the probability limit of the $(n-1) \times (n-1)$ matrix $p^{-1}\Vv^\top \Sv_W^* \Vv$ by $\Lv$. Since $\hat\uv_i^\top \hat\uv_j^* \xrightarrow{P} 0$ and $\mdp^\top \hat\uv_j^* \xrightarrow{P} 0 $ as $p \to \infty$ for $j \notin {\Dc}^*$, we have $\xiv_j = \0v_{n-1}$ for $j \notin {\Dc}^*$ and 
\begin{align}\label{eq:L}
    {\Lv} = \sum_{l'=1}^{m_1+m_2} \phi_{l'}(\Phiv_{\tau_1,\tau_2}^*) \xiv_{j_{l'}} \xiv_{j_{l'}}^{\top}.
\end{align}
We will show that $\rank{(\Lv)} = m$. Recall that $\phi_{l'}(\Phiv_{\tau_1, \tau_2}^*) > \tau_2^2 > 0$ for all $l' = 1, \ldots, m_1+m_2$. Now, we define the $(n-2) \times m$ matrix $\Omegav_1$ such that
\begin{align}\label{eq:Omega1}
    [\Omegav_1]_{i_l,j} = [\tilde{\Omegav}_{1}]_{l,j},
\end{align}
for $1 \leq l \leq m_1+m_2$ and $1 \leq j \leq m$ where $\tilde{\Omegav}_1$ is defined in (\ref{eq:tildeOmega1}), and 
\begin{align*}
    [\Omegav_1]_{i,j} = 0
\end{align*}
for $i \notin \Dc$ and $1 \leq j \leq m$. Also, we define the $(n-1) \times m$ matrix 
\begin{align}\label{eq:Omega}
    \Omegav = [\Omegav_1^\top, ~\omegav_1]^\top \in \Real^{(n-1) \times m}
\end{align}
where $\omegav_1$ is the $m \times 1$ vector defined in (\ref{eq:tildeOmega12}). Lastly, analogously to $\tilde{\Omegav}_1$ in (\ref{eq:tildeOmega1}), we define the $(m_1+m_2) \times m$ matrix $\tilde{\Omegav}_1^*$ such that
\begin{align}\label{eq:Omega1star}
    [\tilde{\Omegav}_1^*]_{i,j} = D_{i,j}^*
\end{align}
for $1 \leq i \leq m_1+m_2$ and $1 \leq j \leq m$. Then from (\ref{eq:app:sampleevec:2}) and (\ref{eq:app:angMDP:2}), 
\begin{align}\label{eq:tildeQ}
    {\Xiv} := [\xiv_{j_1}, \ldots, \xiv_{j_{m_1+m_2}}] = \Omegav \tilde{\Omegav}_1^{* \top} \in \Real^{(n-1) \times (m_1+m_2)}.
\end{align}
Since both of $\Omegav$ and $\tilde{\Omegav}_1^*$ are of rank $m$, we have $\rank{(\Lv)} = \rank{(\Xiv)} = m$.
\end{proof}
\end{subsection}

\begin{subsection}{Proof of Theorem~\ref{thm:data-splitting approach piling distance}}
\begin{proof}
(i) For any given $\left\{ \wv \right\} \in \bar{\Ac}$, we have already seen that there exists $\left\{\vv \right\}$ such that $\vv \in {\rm span}(\Vv\hat\Qv_2)$ for all $p$ and $\|\wv - \vv \| \xrightarrow{P} 0$ as $p \to \infty$. Thus it suffices to show that $\vv^\top \uv_j \xrightarrow{P} 0$ as $p \to \infty$. Let $\vv = \Vv\bv$ such that $\bv \in {\rm span}(\hat\Qv_2)$. Note that $\bv$ is asymptotically orthogonal to ${\rm span}(\Lv) = {\rm span}(\Omegav)$ where $\Lv$ and $\Omegav$ are in (\ref{eq:L}) and (\ref{eq:Omega}), respectively. Using (\ref{eq:uij}) and (\ref{eq:L1:MDP:b:2}), we have
\begin{align}\label{eq:basis:orthogonal}
    \vv^\top \uv_j &= \bv^\top \Vv^\top \uv_j = \bv^\top [\Omegav]^j + o_p(1) \xrightarrow{P} 0
\end{align}
as $p \to \infty$ for $1 \leq j \leq m$. 

(ii) For any given $\left\{ \wv \right\} \in \bar{\Ac}$, we assume $\wv = \Vv\av$ where $\av = (a_1, \ldots, a_{n-2}, a_{\textup{MDP}})^\top$ satisfies $a_{\textup{MDP}} \xrightarrow{P} \psi_{\textup{MDP}}$ as $p \to \infty$. Recall that there exists $\left\{\vv \right\}$ such that $\vv \in {\rm span}(\Vv\hat\Qv_2)$ for all $p$ and $\|\wv - \vv \| \xrightarrow{P} 0$ as $p \to \infty$. Write $\vv = \Vv\bv$ with $\bv = (b_1, \ldots, b_{n-2}, b_{\textup{MDP}})^\top$. Then $\|\wv-\vv\| = \|\av-\bv \| \xrightarrow{P} 0$ and $b_{\textup{MDP}} \xrightarrow{P} \psi_{\textup{MDP}}$ as $p \to \infty$. Note that $p^{-1/2}\wv^\top (Y -\bar{X}) = p^{-1/2}\vv^\top (Y -\bar{X}) + o_p(1)$ and it suffices to obtain the probability limit of $p^{-1/2}\vv^\top (Y -\bar{X})$. For any independent observation $Y$, which is independent to both of $\Xc$ and $\Yc$, assume that $\pi(Y) = 1$. Combining (\ref{eq:thm:L1:a+b:ne}), (\ref{eq:L1:MDP:a:2}) and (\ref{eq:basis:orthogonal}) gives
\begin{align*}
    \frac{1}{\sqrt{p}}\vv^\top(Y - \bar{X}) &=  \sum_{j=1}^{m} t_j \bv^\top [\Omegav]^j + \frac{\psi_{\textup{MDP}}}{\kappa}\left\{\eta_2(1-\cos^2\varphi)\delta^2 - \frac{\tau_1^2 - \tau_2^2}{n}  \right\} + o_p(1) \\
    &\xrightarrow{P} \frac{\psi_{\textup{MDP}}}{\kappa}\left\{ \eta_2(1-\cos^2\varphi)\delta^2 - \frac{\tau_1^2 - \tau_2^2}{n}  \right\}
\end{align*}
as $p \to \infty$ where $t_j$ is defined in (\ref{eq:tj}). Similarly, we can show that
\begin{align*}
    \frac{1}{\sqrt{p}}\wv^\top(Y - \bar{X}) \xrightarrow{P} \frac{\psi_{\textup{MDP}}}{\kappa}\left\{-\eta_1(1-\cos^2\varphi)\delta^2 - \frac{\tau_1^2 - \tau_2^2}{n} \right\}
\end{align*}
as $p \to \infty$ for any independent observation $Y$, which is independent to both of $\Xc$ and $\Yc$, with $\pi(Y) = 2$.
\end{proof}
\end{subsection}

\begin{subsection}{Proof of Theorem~\ref{thm:data-splitting approach SMDP}}
\begin{proof}
Theorem~\ref{thm:data-splitting approach piling distance} tells that, for $\left\{ \wv\right\} \in \bar{\Ac}$ with $\wv = \Vv\av$ and $a_{\textup{MDP}} = \ev_{\textup{MDP}}^\top \av \xrightarrow{P} \psi_{\textup{MDP}}$ as $p \to \infty$, the asymptotic distance between the two piles of independent test data is given by 
\begin{align*}
    D(\wv) = \frac{\psi_{\textup{MDP}}}{\kappa}(1-\cos^2\varphi)\delta^2.
\end{align*}
Let $\smdp = \Vv\av_{\textup{SMDP}}$ where
$\av_{\textup{SMDP}} = \|\hat\Qv_2\hat\Qv_2^\top\ev_{\textup{MDP}} \|^{-1}\hat\Qv_2\hat\Qv_2^\top\ev_{\textup{MDP}}$
and $\ev_{\textup{MDP}} = (\0v_{n-2}^\top, 1)^\top$. Note that
\begin{equation}\label{eq:SMDPloading}
    \begin{aligned}
    \ev_{\textup{MDP}}^\top \av_{\textup{SMDP}} &= 
\|\hat\Qv_2\hat\Qv_2^\top\ev_{\textup{MDP}}\|^{-1}\ev_{\textup{MDP}}^\top \hat\Qv_2\hat\Qv_2^\top\ev_{\textup{MDP}} =\|\hat\Qv_2^\top\ev_{\textup{MDP}} \|.
    \end{aligned}
\end{equation}
As shown in the proof of Theorem~\ref{thm:scatter of projected test data}, $\Lv$, which is the probability limit of the $(n-1)\times (n-1)$ matrix $p^{-1}\Vv^{\top}\Sv_{W}^*\Vv = \hat\Qv\Hv \hat\Qv^\top$, is of rank $m$. Moreover, from (\ref{eq:tildeQ}), we have ${\rm span}(\Lv) = \rank{(\Xiv)} = {\rm span}(\Omegav) = m$ where $\Omegav$ is defined in (\ref{eq:Omega}). Then using similar arguments in the proof of Theorem~\ref{thm:SMDP piling distance}, we can show that the probability limit of (\ref{eq:SMDPloading}) is $\kappa\upsilon_0$, where $\upsilon_0$ is defined in (\ref{eq:app:upsilon0}). Hence, we have 
\begin{align*}
    D(\smdp) = \upsilon_0 (1-\cos^2\varphi)\delta^2 > 0
\end{align*}
with probability $1$. Then, by applying similar arguments as in the proof of Theorem~\ref{thm:SMDP characterization}, we conclude the proof.

\end{proof}
\end{subsection}

\subsection{Proof of Theorem~\ref{thm:ridge concentration}}
\begin{proof}
Write
\begin{align*}
    \tilde{\wv}_{\alpha} = \sum_{i=1}^{n-2} \frac{\alpha_p}{\hat\lambda_i + \alpha_p}\hat\uv_i \left(\frac{1}{\sqrt{p}}\hat\uv_i^\top \dv \right) + \frac{1}{\sqrt{p}}\|\hat\Uv_2 \hat\Uv_2^\top \dv\| \mdp.
\end{align*}
Then $\wv_{\alpha} \varpropto \tilde{\wv}_{\alpha}$ for any $\alpha \in \Real$ where $\wv_{\alpha}$ is defined. Also, note that ${\rm Angle}(\wv_{\alpha}, \Sc) = \arccos{(\|P_{\Sc}\tilde{\wv}_{\alpha} \| / \|\tilde{\wv}_{\alpha}\| )}$. We decompose $\| \tilde{\wv}_{\alpha}\|^2$ into the two terms:
\begin{equation}\label{eq:concetarationRidge}
    \begin{aligned}
        \|\tilde{\wv}_{\alpha} \|^2 = \|P_{\Sc}\tilde{\wv}_{\alpha} \|^2 + \sum_{i \in \left\{1, \ldots, n-2 \right\} \setminus \Dc} \left(\frac{\alpha_p}{\hat\lambda_i + \alpha_p} \right)^2 \left(\frac{1}{\sqrt{p}}\hat\uv_i^\top \dv \right)^2.
    \end{aligned}
\end{equation}
Using Lemmas~\ref{lem:asymptotic property of Sw unequal tails} and~\ref{app:lemma:md}, the second term in (\ref{eq:concetarationRidge}) converges to zero in probability for $\alpha \in \Real \setminus \left\{-\tau_1^2, -\tau_2^2\right\}$. Then it suffices to show that $\|P_{\Sc}\tilde{\wv}_{\alpha} \|$ is stochastically bounded. Note that
\begin{align*}
    \|P_{\Sc}\tilde{\wv}_{\alpha} \|^2 = \sum_{i\in \Dc} \left(\frac{\alpha_p}{\hat\lambda_i + \alpha_p} \right)^2 \left(\frac{1}{\sqrt{p}}\hat\uv_i^\top \dv \right)^2 + \kappa_{\textup{MDP}}^2.
\end{align*}
Again from Lemmas~\ref{lem:asymptotic property of Sw unequal tails} and~\ref{app:lemma:md}, the first term converges in probability since 
\begin{align*}
    \sum_{i \in \Dc}\left(\frac{\alpha_p}{\hat\lambda_i + \alpha_p} \right)^2 \left(\frac{1}{\sqrt{p}}\hat\uv_i^\top \dv \right)^2 \xrightarrow{P} \sum_{l=1}^{m_1+m_2}\left(\frac{\alpha}{\phi_l(\Phiv_{\tau_1, \tau_2}) + \alpha} \right)^2\left(\sum_{j=1}^{m} r_{j}D_{l,j} \right)^2
\end{align*}
as $p \to \infty$ where $D_{l,j}$ is defined in (\ref{eq:Dij}). We have already shown that $\kappa_{\textup{MDP}}^2$ converges to $\kappa^2 > 0$ in the proof of Theorem~\ref{thm:test data piling}. These complete the proof. 
\end{proof}

\subsection{Proof of Proposition~\ref{prop:ridge one-spike unequal tails m = 1}}
\begin{proof}
Assume that $\beta_1 = \beta_2 = 1$ and $\tau_1 > \tau_2$. Assume further that $m_1 = m_2 = m = 1$ (that is, $\uv_{(1),1} = \uv_{(2),1} = \uv_1$). (i) is a special case of Theorem~\ref{thm:ridge unequal tails asymptotically orthogonal}. To show (ii), note that for any given $\alpha \in \Real$, $\left\{\vv_{\alpha} \right\} \in \Ac$ if and only if $\vv_{\alpha}^\top \uv_{1} \xrightarrow{P} 0$ as $p \to \infty$. From Lemmas~\ref{lem:asymptotic property of Sw unequal tails} and~\ref{app:lemma:md}, we have 
\begin{align*}
    \tilde{\vv}_{\alpha}^\top \uv_{1} \xrightarrow{P} r_1 \left(1 - \sum_{l=1}^{2} \frac{\phi_l(\Phiv_{\tau_1,\tau_2})}{\phi_l(\Phiv_{\tau_1,\tau_2}) + \alpha}  D_{l,1}^2 \right)
\end{align*}
as $p \to \infty$ where $\tilde{\vv}_{\alpha} \varpropto \vv_{\alpha}$ is defined in (\ref{eq:app:pridge:conditioned:eq}) and $D_{l,1}$ is defined in (\ref{eq:Dij}). Using (i), we have 
\begin{align}\label{eq:app:tau1ntau2}
     r_1 \left(1 - \sum_{l=1}^{2} \frac{\phi_l(\Phiv_{\tau_1,\tau_2})}{\phi_l(\Phiv_{\tau_1,\tau_2}) - \tau_1^2} D_{l,1}^2 \right) = 0 \text{ and } r_1 \left(1 - \sum_{l=1}^{2} \frac{\phi_l(\Phiv_{\tau_1,\tau_2})}{\phi_l(\Phiv_{\tau_1,\tau_2}) - \tau_2^2} D_{l,1}^2 \right) = 0.
\end{align}
where $r_1$ is defined in Lemma~\ref{app:lemma:md}. Hence, we can write
\begin{align*}
    \tilde{\vv}_{\alpha}^\top \uv_1 \xrightarrow{P} \frac{r_1(\alpha+\tau_1^2)(\alpha+\tau_2^2)}{(\alpha+\phi_1(\Phiv_{\tau_1,\tau_2}))(\alpha+\phi_2(\Phiv_{\tau_1,\tau_2}))}
\end{align*}
as $p \to \infty$. Since $\|\tilde{\vv}_{\alpha}\|$ is stochastically bounded, $\vv_{\alpha}^\top \uv_1 \xrightarrow{P} 0$ as $p \to \infty$ if and only if $\alpha = -\tau_1^2$ or $\alpha = -\tau_2^2$. (iii) is a special case of Theorem~\ref{thm:ridge unequal tails piling distance}.
\end{proof}

\begin{subsection}{Proof of Theorem~\ref{thm:compare two ridges}}
\begin{proof}
We continue to use the notations in the proof of Proposition~\ref{prop:ridge one-spike unequal tails m = 1}.  From Proposition~\ref{prop:ridge one-spike unequal tails m = 1}, we can check that $D(\vv_{\hat\alpha_k})$, the asymptotic distance between $P_{\vv_{\hat\alpha_k}}\Yc_1$ and $P_{\vv_{\hat\alpha_k}}\Yc_2$, is $\gamma_{k}(1-\cos^2\varphi)\delta^2$ for each $k = 1, 2$, where $\gamma_{1}$ and $\gamma_{2}$ are defined in (\ref{eq:gamma1}) and (\ref{eq:gamma2}), respectively. To be specific, when $m_1 = m_2 = m = 1$,
\begin{align}\label{eq:gamma1:m=1}
    \gamma_{k} = \left(r_1^2 \sum_{l=1}^{2} \frac{\tau_k^4}{(\phi_l(\Phiv_{\tau_1, \tau_2}) - \tau_k^2)^2}D_{l,1}^2 + \kappa^2 \right)^{-1/2}
\end{align}
as $p \to \infty$. It can be checked that
\begin{equation}\label{eq:app:Phi11}
    \begin{aligned}
        D_{1,1}^2 = \frac{\phi_{1}(\Phiv_{\tau_1, \tau_2}) - \tau_2^2 - v_{11}^2(\Phiv_{\tau_1, \tau_2})(\tau_1^2 - \tau_2^2)}{\phi_1(\Phiv_{\tau_1,\tau_2})} \\
        D_{2,1}^2 = \frac{\phi_{2}(\Phiv_{\tau_1, \tau_2}) - \tau_1^2 - v_{22}^2(\Phiv_{\tau_1, \tau_2})(\tau_2^2 - \tau_1^2)}{\phi_{2}(\Phiv_{\tau_1,\tau_2})}
    \end{aligned}
\end{equation}
and
\begin{equation}\label{eq:app:v11}
    \begin{aligned}
        v_{11}^2(\Phiv_{\tau_1,\tau_2}) = v_{22}^2(\Phiv_{\tau_1,\tau_2}) &=\frac{(\phi_1(\Phiv_{\tau_1,\tau_2}) - \tau_2^2)(\tau_1^2 - \phi_{2}(\Phiv_{\tau_1, \tau_2}))}{(\tau_1^2 - \tau_2^2)(\phi_1(\Phiv_{\tau_1, \tau_2}) - \phi_2(\Phiv_{\tau_1,\tau_2}))} \\
        &= \frac{1}{2} + \frac{\Phiv_{1}-\Phiv_{2}+\tau_1^2 -\tau_2^2}{2\sqrt{(\Phiv_{1}-\Phiv_{2}+\tau_1^2 -\tau_2^2)^2 + 4\Phiv_1 \Phiv_2}}.
    \end{aligned}
\end{equation}
Combining (\ref{eq:app:Phi11}) and (\ref{eq:app:v11}) gives
\begin{align*}
    &D(\vv_{\hat\alpha_1}) \leq D(\vv_{\hat\alpha_2}) \Leftrightarrow \gamma_{1}^2 \leq \gamma_{2}^2 \\
    &\Leftrightarrow \sum_{l=1}^{2} \frac{\tau_1^4}{(\phi_l(\Phiv_{\tau_1, \tau_2}) - \tau_1^2)^2}D_{l,1}^2 \ge \sum_{l=1}^{2} \frac{\tau_2^4}{(\phi_l(\Phiv_{\tau_1, \tau_2}) - \tau_2^2)^2}D_{l,1}^2 \\
    &\Leftrightarrow \tau_1^4\left(\frac{\phi_2(\Phiv_{\tau_1,\tau_2})(\phi_1(\Phiv_{\tau_1,\tau_2}) - \tau_2^2)}{\phi_1(\Phiv_{\tau_1,\tau_2})-\tau_1^2} + \frac{\phi_1(\Phiv_{\tau_1,\tau_2})(\tau_2^2- \phi_2(\Phiv_{\tau_1,\tau_2}))}{\phi_2(\Phiv_{\tau_1,\tau_2})-\tau_1^2} \right) \\
    &\quad \ge \tau_2^4\left(\frac{\phi_2(\Phiv_{\tau_1,\tau_2})(\phi_1(\Phiv_{\tau_1,\tau_2}) - \tau_1^2)}{\phi_1(\Phiv_{\tau_1,\tau_2})-\tau_2^2} + \frac{\phi_1(\Phiv_{\tau_1,\tau_2})(\tau_1^2- \phi_2(\Phiv_{\tau_1,\tau_2}))}{\phi_2(\Phiv_{\tau_1,\tau_2})-\tau_2^2}  \right) \\
    &\Leftrightarrow \tau_1^4 \left(\frac{\Phiv_2 + \tau_2^2}{\Phiv_1} \right) \ge \tau_2^4 \left(\frac{\Phiv_{1}+ \tau_1^2}{\Phiv_{2}} \right) \Leftrightarrow \frac{\Phiv_{2}}{\tau_2^2} \ge \frac{\Phiv_{1}}{\tau_1^2}.
\end{align*}

Now, assume further that $X |\pi(X) = k \sim \Nc_p(\muv_{(k)}, \Sigmav_{(k)})$. For each $k = 1, 2$, note that $\Phiv_{k} = \Wv_{k}^\top \left(\Iv_{n_k} - n_k^{-1}\Jv_{n_k}\right)\Wv_{k} \sim \sigma_{k,1}^2V_k$ where $V_{k} \sim \chi^2(n_k-1)$ and $V_1$ and $V_2$ are independent to each other. Then,
\begin{align*}
    \mathbb{P}(D(\vv_{\hat\alpha_1}) \leq D(\vv_{\hat\alpha_2})) &= \mathbb{P}(\tau_1^{-2}\Phiv_{1} \leq \tau_2^{-2}\Phiv_{2}) \\
    &= \mathbb{P}\left(\frac{(n_1-1)^{-1}V_1}{(n_2-1)^{-1}V_2} \leq \frac{(n_2-1)\tau_2^{-2}\sigma_{2,1}^2}{(n_1-1)\tau_1^{-2}\sigma_{1,1}^2}\right) \\
    &= \mathbb{P}\left(F \leq \frac{(n_2-1)\tau_2^{-2}\sigma_{2,1}^2}{(n_1-1)\tau_1^{-2}\sigma_{1,1}^2}\right) 
\end{align*}
where $F \sim F(n_1-1, n_2-1)$.
\end{proof}
\end{subsection}

\begin{subsection}{Proof of Proposition~\ref{prop:compare ridge and SMDP}}
\begin{proof}
We continue to use the notations in the proofs of Proposition~\ref{prop:ridge one-spike unequal tails m = 1} and Theorem~\ref{thm:compare two ridges}. If $m_1 = m_2 = m = 1$, then  $\upsilon_0$ defined in (\ref{eq:app:upsilon0}) is given by
\begin{align*}
    \upsilon_0 = \left\{r_1^2\left(1 - D_{1,1}^2 - D_{2,1}^2 \right)^2\left(D_{1,1}^2+D_{2,1}^2  \right)^{-1} + \kappa^2 \right\}^{-1/2}.
\end{align*}
To show that $\upsilon_0$ is greater than $\gamma_{1}$ and $\gamma_{2}$ in (\ref{eq:gamma1:m=1}), it suffices to show that 
\begin{align*}
    &\left(1 - D_{1,1}^2 - D_{2,1}^2 \right)^2\left(D_{1,1}^2 + D_{2,1}^2 \right)^{-1} \\
    &~<  \frac{\tau_k^4}{(\phi_1(\Phiv_{\tau_1, \tau_2}) - \tau_k^2)^2}D_{1,1}^2 + \frac{\tau_k^4}{(\phi_2(\Phiv_{\tau_1, \tau_2}) - \tau_k^2)^2} D_{2,1}^2
\end{align*}
for $k = 1, 2$. From (\ref{eq:app:tau1ntau2}), we have
\begin{align*}
    1 - \frac{\phi_1(\Phiv_{\tau_1, \tau_2})}{\phi_1(\Phiv_{\tau_1, \tau_2}) - \tau_k^2}D_{1,1}^2 - \frac{\phi_2(\Phiv_{\tau_1,\tau_2})}{\phi_2(\Phiv_{\tau_1,\tau_2}) - \tau_k^2}D_{2,1}^2 = 0
\end{align*}
for $k = 1, 2$ and thus
\begin{align*}
    \left(1 - D_{1,1}^2 - D_{2,1}^2 \right)^2 &= \tau_k^4 \left(\sum_{i=1}^{2} \frac{D_{i,1}^2}{\phi_i(\Phiv_{\tau_1, \tau_2}) - \tau_k^2 }\right)^2 \\
    &\leq \tau_k^4\left( \sum_{i=1}^{2}D_{i,1}^2 \right)\left(\sum_{i=1}^{2} \frac{D_{i,1}^2 }{(\phi_i(\Phiv_{\tau_1, \tau_2}) - \tau_k^2)^2}\right)
\end{align*}
by the Cauchy-Schwarz inequality. Note that the equality does not hold with probability $1$ since $\phi_1(\Phiv_{\tau_1,\tau_2}) > \phi_2(\Phiv_{\tau_1,\tau_2})$. Hence, $\upsilon_0 > \gamma_k$ for all $k = 1, 2$ with probability $1$.
\end{proof}
\end{subsection}

\begin{subsection}{Proof of Proposition~\ref{prop:ridge one-spike unequal tails m = 2}}
\begin{proof}
Assume that $\beta_1 = \beta_2 = 1$ and $\tau_1 > \tau_2$. Assume further that $m_1 = m_2 = 1$ and $m = 2$ (that is, $\uv_{(1),1} \ne \uv_{(2),1}$). For any given $\alpha \in \Real$, $\left\{\vv_{\alpha} \right\} \in \Ac$ if and only if $\vv_{\alpha}^{\top} \uv_{(1),1} \xrightarrow{P} 0$ and $\vv_{\alpha}^{\top} \uv_{(2),1} \xrightarrow{P} 0$ as $p \to \infty$. Since $\|\tilde{\vv}_{\alpha}\|$ (defined in (\ref{eq:app:pridge:conditioned:eq})) is stochastically bounded for all $\alpha \in \Real$ where $\tilde{\vv}_{\alpha}$ is defined, it suffices to show that there is no ridge parameter $\alpha \in \Real$ such that both of $\tilde{\vv}_{\alpha}^{\top} \uv_{(1),1}$ and $\tilde{\vv}_{\alpha}^{\top} \uv_{(2),1}$ converge to zero as $p \to \infty$. It can be checked that
\begin{equation}\label{eq:app:v11:m=2}
    \begin{aligned}
        v_{11}^2(\Phiv_{\tau_1,\tau_2}) = v_{22}^2(\Phiv_{\tau_1,\tau_2}) &= \frac{1}{2} + \frac{\Phiv_{1} - \Phiv_{2} + \tau_1^2 - \tau_2^2}{2\sqrt{(\Phiv_{1} - \Phiv_{2} + \tau_1^2 - \tau_2^2)^2 + 4(\Rv_{1}^\top \Rv_{2})^2 \Phiv_{1}\Phiv_{2}}}
    \end{aligned}
\end{equation}
and 
\begin{equation}\label{eq:app:v12:m=2}
    \begin{aligned}
        v_{11}(\Phiv_{\tau_1, \tau_2})v_{12}(\Phiv_{\tau_1, \tau_2}) = \frac{(\Rv_{1}^\top\Rv_{2})\Phiv_{1}^{1/2}\Phiv_{2}^{1/2}}{\sqrt{(\Phiv_1 - \Phiv_2 + \tau_1^2 - \tau_2^2)^2 + 4(\Rv_{1}^\top\Rv_{2})\Phiv_{1}\Phiv_{2}}}.
    \end{aligned}
\end{equation}
Combining (\ref{eq:app:v11:m=2}), (\ref{eq:app:v12:m=2}) and (\ref{eq:app:ridge_innerprouct_2}) in the proof of Theorem~\ref{thm:ridge unequal tails asymptotically orthogonal} gives
\begin{align*}
        \tilde{\vv}_{\alpha}^{\top} \uv_{1,1} &\xrightarrow{P} \rv^\top \{\Iv_{2} - (\Rv_{1}\Phiv_{1}^{1/2}\tilde{\Vv}_1(\Phiv_{\tau_1,\tau_2}) + \Rv_{2}\Phiv_{2}^{1/2}\tilde{\Vv}_2(\Phiv_{\tau_1,\tau_2}))(\Dv(\Phiv_{\tau_1,\tau_2})+\alpha\Iv_{2})^{-1}\\
        &\quad\quad\quad(\Rv_{1}\Phiv_{1}^{1/2}\tilde{\Vv}_1(\Phiv_{\tau_1,\tau_2}) + \Rv_{2}\Phiv_{2}^{1/2}\tilde{\Vv}_2(\Phiv_{\tau_1,\tau_2}))^\top\}\Rv_{1} \\
        &= \frac{(\rv^\top\Rv_{1})(\alpha+\tau_1^2)(\alpha +\phi_1(\Phiv_{\tau_1,\tau_2})v_{21}^2(\Phiv_{\tau_1,\tau_2})+\phi_2(\Phiv_{\tau_1,\tau_2})v_{11}^2(\Phiv_{\tau_1,\tau_2}))}{(\alpha+\phi_1(\Phiv_{\tau_1,\tau_2}))(\alpha+\phi_2(\Phiv_{\tau_1,\tau_2}))} \\
        &- \frac{(\rv^\top\Rv_{2})(\alpha+\tau_1^2)\{\Phiv_{1}^{-1/2}\Phiv_{2}^{1/2}(\phi_{1}(\Phiv_{\tau_1,\tau_2}) - \phi_{2}(\Phiv_{\tau_1,\tau_2}))v_{11}(\Phiv_{\tau_1, \tau_2})v_{12}(\Phiv_{\tau_1, \tau_2}) \} }{(\alpha+\phi_1(\Phiv_{\tau_1,\tau_2}))(\alpha+\phi_2(\Phiv_{\tau_1,\tau_2}))} \\
        &= \frac{(\rv^\top\Rv_{1})(\alpha+\tau_1^2)\{\alpha + \Phiv_{2} + \tau_2^2 - (\rv^\top \Rv_{1})^{-1} (\rv^\top \Rv_{2} )(\Rv_{1}^\top \Rv_{2})\Phiv_{2}\}}{(\alpha+\phi_1(\Phiv_{\tau_1,\tau_2}))(\alpha+\phi_2(\Phiv_{\tau_1,\tau_2}))} 
\end{align*}
as $p \to \infty$ where $\rv = (r_1, r_2)^\top$ with $r_i$ ($i = 1, 2$) defined in Lemma~\ref{app:lemma:md}, $\Dv(\Phiv_{\tau_1, \tau_2})$ and $\tilde{\Vv}_i(\Phiv_{\tau_1, \tau_2})$ are defined in (\ref{eq:perturbed covariance notations}). Hence, $\tilde{\vv}_{\alpha}^\top \uv_{(1),1} \xrightarrow{P} 0$ as $p \to \infty$ if and only if $\alpha = -\tau_1^2$ and $\alpha = -\Phiv_{2} - \tau_2^2 + (\rv^\top \Rv_{1})^{-1} (\rv^\top \Rv_{2})(\Rv_{1}^\top \Rv_{2})\Phiv_{2}$. Similarly,
\begin{align*}
        \tilde{\vv}_{\alpha}^{\top} \uv_{(2),1} &\xrightarrow{P} \frac{(\rv^\top\Rv_{2})(\alpha+\tau_2^2)\{\alpha + \Phiv_{1} + \tau_1^2 - (\rv^\top \Rv_{2})^{-1} (\rv^\top \Rv_{1})(\Rv_{1}^\top \Rv_{2})\Phiv_{1}\}}{(\alpha+\phi_1(\Phiv_{\tau_1,\tau_2}))(\alpha+\phi_2(\Phiv_{\tau_1,\tau_2}))}
\end{align*}
as $p \to \infty$. Hence, $\tilde{\vv}_{\alpha}^\top \uv_{(2),1} \xrightarrow{P} 0$ as $p \to \infty$ if and only if $\alpha = -\tau_2^2$ and $\alpha = -\Phiv_{1} - \tau_1^2 + (\rv^\top \Rv_{2})^{-1} (\rv^\top \Rv_{1})(\Rv_{1}^\top \Rv_{2})\Phiv_{1}$ and there is no ridge parameter $\alpha \in \Real$ such that both of $\tilde{\vv}_{\alpha}^\top \uv_{(1),1}$ and $\tilde{\vv}_{\alpha}^\top\uv_{(2),1}$ converge to zero in probability as $p \to \infty$.  
\end{proof}
\end{subsection}

\begin{subsection}{Proof of Theorem~\ref{thm:ridge equal tail eigenvalues piling distance}}
\begin{proof}
Assume that $\beta_1 = \beta_2 = 1$ and $\tau_1 = \tau_2 =: \tau$. For $Y \in \Yc$, assume that $\pi(Y) = 1$ and denote $M(\alpha) = p^{-1/2} \tilde{\vv}_{\alpha}^\top (Y - \bar{X})$ where 
\begin{align*}
    \tilde{\vv}_{\alpha} = \sum\limits_{i\in\Dc} \frac{\alpha_p}{\hat\lambda_i + \alpha_p} \hat\uv_i\left( \frac{1}{\sqrt{p}}\hat\uv_i^\top\dv\right) + \frac{1}{\sqrt{p}}\hat\Uv_2\hat\Uv_2^\top\dv
\end{align*}
so that $\vv_{\alpha} \varpropto \tilde{\vv}_{\alpha}$. Also, suppose that $\hat\alpha$ is an HDLSS-consistent estimator of $-\tau^2$. Recall that in this case $\Dc = \{1, \ldots, m\}$. Using Lemmas~\ref{lem:asymptotic property of Sw equal tails} and~\ref{app:lemma:md} (i), we obtain
\begin{align*}
    &M(\hat\alpha) = \frac{1}{\sqrt{p}}\tilde{\vv}_{\hat\alpha}^\top (Y - \bar{X}) \\
    &= \sum_{i=1}^{m} \frac{\hat\alpha}{\hat\lambda_i / p + \hat\alpha} \left(\frac{1}{\sqrt{p}}\hat\uv_i^\top \dv \right) \left(\frac{1}{\sqrt{p}}\hat\uv_i^\top (Y - \bar{X}) \right) + \frac{1}{\sqrt{p}} \|\hat\Uv_2 \hat\Uv_2^\top \dv \|  \left(\frac{1}{\sqrt{p}}\mdp^\top (Y - \bar{X})\right) \\
    &\xrightarrow{P} \sum_{i=1}^{m}\frac{-\tau^2}{\phi_i(\Phiv)}\sum_{j=1}^{m}\sqrt{\frac{\phi_i(\Phiv)}{\phi_i(\Phiv) + \tau^2}}r_{j}v_{ij}(\Phiv) \sum_{j'=1}^{m}\sqrt{\frac{\phi_i(\Phiv)}{\phi_i(\Phiv)+\tau^2}} t_{j'}v_{ij'}(\Phiv) \\
    &+\kappa \times \frac{1}{\kappa}\left( \eta_2(1-\cos^2\varphi)\delta^2 - \frac{\tau_1^2 - \tau_2^2}{n} +  \sum_{i=1}^{m}\sum_{j=1}^{m}\sum_{j'=1}^{m}r_{j} t_{j'} \frac{\tau^2}{\phi_i(\Phiv) + \tau^2}v_{ij}(\Phiv)v_{ij'}(\Phiv) \right) \\
    &= \eta_2(1- \cos^2\varphi)\delta^2 > 0
\end{align*}
as $p \to \infty$ where $\Phiv$ is defined in (\ref{eq:covariance of commom pc scores}), $t_{j} = \eta_2 \cos\theta_j \delta + \sum_{k=1}^{m_1} [\Rv_{1}]_{jk}\sigma_{1,k}(\zeta_k - \eta_1 \bar{\zv}_{1,k}) - \eta_2 \sum_{k=1}^{m_2} [\Rv_{2}]_{jk}\sigma_{2,k}\bar{\zv}_{2,k}$ and $r_{j}$ is defined in Lemma~\ref{app:lemma:md} (i). Next, again using Lemmas~\ref{lem:asymptotic property of Sw equal tails} and~\ref{app:lemma:md} (i), we have
\begin{align*}
    \|\tilde{\vv}_{\hat\alpha} \|^2 &= \sum_{i=1}^{m} \left(\frac{\hat\alpha}{\hat\lambda_i/p + \hat\alpha}\right)^2\left( \frac{1}{\sqrt{p}}\hat\uv_i^\top\dv\right)^2 + \kappa_{\textup{MDP}}^2 \\
    &\xrightarrow{P} \sum_{i=1}^{m} \frac{\tau^4}{\phi_i(\Phiv)(\phi_i(\Phiv) + \tau^2)} \left(\sum_{j=1}^{m}r_{j} v_{ij}(\Phiv) \right)^2 + \kappa^2 = \upsilon_0^{-2} > 0
\end{align*}
where $\upsilon_0$ is in (\ref{eq:app:upsilon0:equal tails}). Hence, we have
\begin{align*}
    \frac{1}{\sqrt{p}}\vv_{\hat\alpha}^\top(Y - \bar{X}) = \frac{1}{\|\tilde{\vv}_{\hat\alpha} \|} \frac{1}{\sqrt{p}}\tilde{\vv}_{\hat\alpha}^\top(Y - \bar{X}) \xrightarrow{P} \upsilon_0\eta_2(1-\cos^2\varphi)\delta^2
\end{align*}
as $p \to \infty$. Similarly, we can show that $p^{-1/2}\vv_{\hat\alpha}^\top (Y - \bar{X}) \xrightarrow{P} \upsilon_0(-\eta_1(1-\cos^2\varphi)\delta^2)$ as $p \to \infty$ for $Y \in \Yc$ with $\pi(Y) = 2$. 
\end{proof}
\end{subsection}

\begin{subsection}{Proof of Theorem~\ref{thm:ridge unequal tails asymptotically orthogonal}}
\begin{proof}
Assume that $\beta_1 = \beta_2 = 1$ and $\tau_1 > \tau_2$. For notational simplicity, we write $\Dc = \left\{i_1, \ldots, i_{m_1+m_2} \right\}$ so that $i_l < i_{l'}$ if $l < l'$. Write 
\begin{equation}\label{eq:app:pridge:conditioned:eq}
    \begin{aligned}
    \tilde{\vv}_{\alpha} = \sum_{l=1}^{m_1+m_2} \frac{\alpha}{\hat\lambda_{i_l} / p + \alpha} \left( \frac{1}{\sqrt{p}} \hat\uv_{i_l}^\top \dv \right) \hat\uv_{i_l} + \frac{1}{\sqrt{p}}\|\hat\Uv_2\hat\Uv_2^\top \dv \| \mdp.
    \end{aligned} 
\end{equation}
Assume that for each $k = 1, 2$, $\hat\alpha_k$ is an HDLSS-consistent estimator of $-\tau_k^2$. First, we shall show that ${\rm Angle}{(\vv_{\hat\alpha_1}, \uv_{(1),\iota, \Sc})} \xrightarrow{P} \pi / 2$ as $p \to \infty$ for $1 \leq \iota \leq m_1$. Using Lemmas~\ref{lem:asymptotic property of Sw unequal tails} and~\ref{app:lemma:md}, the inner product between $\tilde{\vv}_{\hat\alpha_1}$ and $\uv_{(1),\iota,\Sc}$ becomes
\begin{align*}
    &\tilde{\vv}_{\hat\alpha_1}^{\top} \uv_{(1),\iota, \Sc} = \sum_{l=1}^{m_1+m_2} \frac{\hat\alpha_1 }{\hat\lambda_{i_l} / p+ \hat\alpha_1} \left(\frac{1}{\sqrt{p}}\hat\uv_{i_l}^\top \dv \right) (\uv_{(1),\iota}^\top \hat\uv_{i_l}) + \frac{1}{\sqrt{p}} \|\hat\Uv_2 \hat\Uv_2^\top \dv \| (\uv_{(1),\iota}^\top \mdp) \\
    &= \sum_{j=1}^{m}\left\{\sum_{l=1}^{m_1+m_2} \frac{\hat\alpha_1 }{\hat\lambda_{i_l}/p + \hat\alpha_1} \left(\frac{1}{\sqrt{p}}\hat\uv_{i_l}^\top \dv \right) (\uv_{j}^\top \hat\uv_{i_l}) + \frac{1}{\sqrt{p}} \|\hat\Uv_2 \hat\Uv_2^\top \dv \| (\uv_{j}^\top \mdp) \right\} [\Rv_{1}^{(p)}]_{j\iota} \\
    &\xrightarrow{P} \sum_{j=1}^{m}  \left[  \sum_{j'=1}^{m}\sum_{l=1}^{m_1+m_2} \frac{-\tau_1^2}{(\phi_l(\Phiv_{\tau_1, \tau_2}) - \tau_1^2)} r_{j'} D_{l,j} D_{l,j'}   \right. \\
    &~\quad \left.+~\kappa \times \frac{1}{\kappa}\left\{ \sum_{j'=1}^{m} r_{j'} \left(I_{jj'} - \sum_{l=1}^{m_1+m_2}D_{l,j}D_{l,j'} \right) \right\} \right][\Rv_{1}]_{j\iota} \\
    &= \sum_{j=1}^{m} \sum_{j'=1}^{m} r_{j'} \left(I_{jj'} - \sum_{l=1}^{m_1+m_2} \frac{\phi_l(\Phiv_{\tau_1,\tau_2})}{\phi_l(\Phiv_{\tau_1, \tau_2}) - \tau_1^2} D_{l,j}D_{l,j'}  \right)  [\Rv_{1}]_{j\iota}
\end{align*}
as $p \to \infty$ where $r_j$ is defined in Lemma~\ref{app:lemma:md}, $I_{jj'} = I(j = j')$ and $D_{l,j}$ is defined in (\ref{eq:Dij}).
Write 
\begin{equation}\label{eq:perturbed covariance notations}
    \begin{aligned}
        &\Dv(\Phiv_{\tau_1,\tau_2}) = \diag{(\phi_1(\Phiv_{\tau_1, \tau_2}), \ldots, \phi_{m_1+m_2}(\Phiv_{\tau_1, \tau_2}))} \\
        &\tilde{\Vv}_i(\Phiv_{\tau_1,\tau_2}) = [\tilde{v}_{1i}(\Phiv_{\tau_1,\tau_2}), \ldots, \tilde{v}_{(m_1+m_2)i}(\Phiv_{\tau_1,\tau_2})] ~(i = 1, 2) \\
        &\Vv(\Phiv_{\tau_1, \tau_2}) = [v_1(\Phiv_{\tau_1, \tau_2}), \ldots, v_{m_1+m_2}(\Phiv_{\tau_1, \tau_2})] = [\tilde{\Vv}_1(\Phiv_{\tau_1,\tau_2})^\top,~\tilde{\Vv}_{2}(\Phiv_{\tau_1,\tau_2})^\top]^\top.
    \end{aligned}
\end{equation}
Then we can write 
\begin{equation}\label{eq:app:ridge_innerprouct_1}
    \begin{aligned}
        \tilde{\vv}_{\hat\alpha_1}^{\top} \uv_{1,\iota,\Sc} \xrightarrow{P} \rv^\top \{\Iv_{m} - & (\Rv_{1}\Phiv_{1}^{1/2}\tilde{\Vv}_1(\Phiv_{\tau_1,\tau_2}) + \Rv_{2}\Phiv_{2}^{1/2}\tilde{\Vv}_2(\Phiv_{\tau_1,\tau_2}))(\Dv(\Phiv_{\tau_1,\tau_2})-\tau_1^2\Iv_{m_1+m_2})^{-1}\\
        &(\Rv_{1}\Phiv_{1}^{1/2}\tilde{\Vv}_1(\Phiv_{\tau_1,\tau_2}) + \Rv_{2}\Phiv_{2}^{1/2}\tilde{\Vv}_2(\Phiv_{\tau_1,\tau_2}))^\top\}[\Rv_{1}]^\iota
    \end{aligned}
\end{equation}
as $p \to \infty$ where $\rv = (r_{1}, \ldots, r_{m})^\top$. Note that 
\begin{align*}
    \tilde{\Vv}_k(\Phiv_{\tau_1,\tau_2})\tilde{\Vv}_k(\Phiv_{\tau_1,\tau_2})^\top = \Iv_{m_k}
\end{align*}
for $k = 1, 2$ and 
\begin{align*}
    \tilde{\Vv}_1(\Phiv_{\tau_1,\tau_2})\tilde{\Vv}_2(\Phiv_{\tau_1,\tau_2})^\top = \Ov_{m_1 \times m_2}.
\end{align*}
Also, since $\Phiv_{\tau_1,\tau_2}\Vv(\Phiv_{\tau_1,\tau_2}) = \Vv(\Phiv_{\tau_1,\tau_2})\Dv(\Phiv_{\tau_1,\tau_2})$, we have
\begin{align*}
    \Phiv_{1}\tilde{\Vv}_1(\Phiv_{\tau_1,\tau_2}) + \Phiv_{1}^{1/2}\Rv_{1}^\top \Rv_{2}\Phiv_{2}^{1/2} \tilde{\Vv}_2(\Phiv_{\tau_1,\tau_2}) = \tilde{\Vv}_1(\Phiv_{\tau_1,\tau_2})(\Dv(\Phiv_{\tau_1,\tau_2}) - \tau_1^2\Iv_{m_1+m_2}).     
\end{align*}
Hence,
\begin{equation}\label{eq:app:ridge_innerprouct_2}
    \begin{aligned}
    &(\Rv_{1}\Phiv_{1}^{1/2}\tilde{\Vv}_1(\Phiv_{\tau_1,\tau_2}) + \Rv_{2}\Phiv_{2}^{1/2}\tilde{\Vv}_2(\Phiv_{\tau_1,\tau_2}))^\top\Rv_{1} \\
    &= (\Dv(\Phiv_{\tau_1,\tau_2}) - \tau_1^2\Iv_{m_1+m_2})\tilde{\Vv}_1(\Phiv_{\tau_1,\tau_2})^\top \Phiv_{1}^{-1/2}.
    \end{aligned}
\end{equation}
Combining (\ref{eq:app:ridge_innerprouct_1}) and (\ref{eq:app:ridge_innerprouct_2}) gives
\begin{equation}\label{eq:app:R1}
    \begin{aligned}
    &\tilde{\vv}_{\hat\alpha_1}^{\top} \uv_{(1),\iota,\Sc}  \\
    &\xrightarrow{P} \rv^\top [\Rv_{1} - (\Rv_{1}\Phiv_{1}^{1/2}\tilde{\Vv}_1(\Phiv_{\tau_1,\tau_2}) + \Rv_{2}\Phiv_{2}^{1/2}\tilde{\Vv}_2(\Phiv_{\tau_1,\tau_2}))\tilde{\Vv}_1(\Phiv_{\tau_1,\tau_2})^\top \Phiv_{1}^{-1/2} ]^\iota \\
    &=\rv^\top [\Rv_{1} - \Rv_{1}]^\iota = \rv^\top [\Ov^{m \times m_1}]^\iota = 0
    \end{aligned}
\end{equation}
as $p \to \infty$ for all $1 \leq \iota \leq m_1$. We can easily check that $\|\tilde{\vv}_{\hat\alpha_1}\|$ and $\|\uv_{(1),\iota,\Sc} \|$ converge to strictly positive random variables. In particular,
\begin{equation}\label{eq:gamma1}
    \begin{aligned}
        \|\tilde{\vv}_{\hat\alpha_1}\|^2 \xrightarrow{P} \sum_{l=1}^{m_1+m_2} \frac{\tau_1^4}{(\phi_l(\Phiv_{\tau_1, \tau_2}) - \tau_1^2)^2}\left(\sum_{j=1}^{m} r_{j}D_{l,j} \right)^2 + \kappa^2 =: \frac{1}{\gamma_{1}^2} > 0.
    \end{aligned}
\end{equation}
Hence, ${\rm Angle}{(\vv_{\hat\alpha_1}, \uv_{(1),\iota, \Sc})} \xrightarrow{P} \pi / 2$ as $p \to \infty$ for all $1 \leq \iota \leq m_1$. We will make use of $\gamma_{1}$ in the proof of Theorem~\ref{thm:ridge unequal tails piling distance}. Similarly, we can show that ${\rm Angle}{(\vv_{\hat\alpha_2}, \uv_{(2),\iota,\Sc})} \xrightarrow{P} \pi/2$ as $p \to \infty$ for all $1 \leq \iota \leq m_2$ and
\begin{equation}\label{eq:gamma2}
    \begin{aligned}
        \|\tilde{\vv}_{\hat\alpha_2}\|^2 \xrightarrow{P} \sum_{l=1}^{m_1+m_2} \frac{\tau_2^4}{(\phi_l(\Phiv_{\tau_1, \tau_2}) - \tau_2^2)^2}\left(\sum_{j=1}^{m} r_{j}D_{l,j}  \right)^2 + \kappa^2 =: \frac{1}{\gamma_{2}^2} > 0.
    \end{aligned}
\end{equation}
We will also make use of $\gamma_{2}$ in the proof of Theorem~\ref{thm:ridge unequal tails piling distance}.
\end{proof}
\end{subsection}

\begin{subsection}{Proof of Theorem~\ref{thm:ridge unequal tails piling distance}}
\begin{proof}
We continue to use the same notations as in Theorem~\ref{thm:ridge unequal tails asymptotically orthogonal}. For $k = 1, 2$, assume that $\pi(Y_k) = k$ for $Y_k \in \Yc$ and denote 
\begin{align*}
    M_k(\alpha) = \frac{1}{\sqrt{p}}\tilde{\vv}_{\alpha}^{\top} (Y_k - \bar{X}).
\end{align*}
Assume that $m = m_1$. Using Lemmas~\ref{lem:asymptotic property of Sw unequal tails} and \ref{app:lemma:md} with (\ref{eq:thm:L1:a+b:ne}) and (\ref{eq:L1:MDP:a:2}), we have
\begin{align*}
    &M_1(\hat\alpha_1) = \frac{1}{\sqrt{p}}\tilde{\vv}_{\hat\alpha_1}^{\top} (Y_1 - \bar{X}) \\
    &= \sum_{l=1}^{m_1+m_2} \frac{\hat\alpha_1}{\hat\lambda_{i_l} /p+ \hat\alpha_1}\left(\frac{1}{\sqrt{p}}\hat\uv_{i_l}^\top \dv \right) \left\{ \frac{1}{\sqrt{p}}\hat\uv_{i_l}^\top (Y_1 - \bar{X}) \right\} + \frac{1}{\sqrt{p}} \|\hat\Uv_2 \hat\Uv_2^\top \dv \| \left\{ \frac{1}{\sqrt{p}}\mdp^\top (Y_1 - \bar{X})\right\} \\
    &\xrightarrow{P} \sum_{l=1}^{m_1+m_2}\sum_{j=1}^{m}\sum_{j'=1}^{m} \frac{-\tau_1^2}{(\phi_l(\Phiv_{\tau_1,\tau_2}) - \tau_1^2)}t_{j}r_{j'} D_{l,j}D_{l,j'}  \\
    &~+\kappa \times \frac{1}{\kappa} \left\{\eta_2(1-\cos^2\varphi)\delta^2 - \frac{\tau_1^2 - \tau_2^2}{n} + \sum_{j=1}^m \sum_{j'=1}^{m} t_j r_{j'} \left(I_{jj'} - \sum_{l=1}^{m_1+m_2}D_{l,j}D_{l,j'}  \right) \right\} \\
    &= \eta_2(1-\cos^2\varphi)\delta^2 - \frac{\tau_1^2 - \tau_2^2}{n} + \sum_{j=1}^{m}\sum_{j'=1}^{m}t_jr_{j'}\left(I_{jj'} -\sum_{l=1}^{m_1+m_2} \frac{\phi_l(\Phiv_{\tau_1,\tau_2})}{\phi_l(\Phiv_{\tau_1, \tau_2}) - \tau_1^2}D_{l,j}D_{l,j'} \right)
\end{align*}
as $p \to \infty$. Note that in this case $\Rv_{1}$ is invertible since $m = m_1$. Then using (\ref{eq:app:R1}), we have
$$I_{jj'} -\sum_{l=1}^{m_1+m_2} \frac{\phi_l(\Phiv_{\tau_1,\tau_2})}{\phi_l(\Phiv_{\tau_1, \tau_2}) - \tau_1^2}D_{l,j}D_{l,j'} = 0$$
for all $1 \leq j, j' \leq m$. Therefore, we have
\begin{align*}
        \frac{1}{\sqrt{p}}\vv_{\hat\alpha_1}^{\top} (Y_1 - \bar{X}) = \frac{1}{\|\tilde{\vv}_{\hat\alpha_1}\|} \frac{1}{\sqrt{p}}\tilde{\vv}_{\hat\alpha_1}^{\top}(Y_1 - \bar{X}) \xrightarrow{P} \gamma_{1}\left( \eta_2 (1- \cos^2\varphi)\delta^2 - \frac{\tau_1^2 - \tau_2^2}{n} \right)
\end{align*}
as $p \to \infty$ where $\gamma_{1}$ is defined in (\ref{eq:gamma1}). In a similar way, we can show that
\begin{align*}
    &M_2(\hat\alpha_1) = \frac{1}{\sqrt{p}}\tilde{\vv}_{\hat\alpha_1}^{ \top} (Y_2 - \bar{X}) \\
    &\xrightarrow{P} -\eta_1(1-\cos^2\varphi)\delta^2 - \frac{\tau_1^2 - \tau_2^2}{n} + \sum_{j=1}^{m}\sum_{j'=1}^{m}s_{j}r_{j'}\left(I_{jj'} -\sum_{l=1}^{m_1+m_2} \frac{\phi_l(\Phiv_{\tau_1,\tau_2})}{\phi_l(\Phiv_{\tau_1, \tau_2}) - \tau_1^2}D_{l,j}D_{l,j'} \right) \\
    &= -\eta_1(1-\cos^2\varphi)\delta^2 - \frac{\tau_1^2 - \tau_2^2}{n}
\end{align*}
as $p \to \infty$ where $s_{j} = -\eta_1\cos\theta_j \delta  - \eta_1 \sum_{k=1}^{m_1}[\Rv_{1}]_{jk}\sigma_{1,k}\bar{\zv}_{1,k} + \sum_{k=1}^{m_2}[\Rv_{2}]_{jk}\sigma_{2,k}(\zeta_k - \eta_2\bar{\zv}_{2,k})$ for $1 \leq j \leq m$. Hence, we have
\begin{align*}
        \frac{1}{\sqrt{p}}\vv_{\hat\alpha_1}^{\top} (Y_2 - \bar{X}) = \frac{1}{\|\tilde{\vv}_{\hat\alpha_1} \|} \frac{1}{\sqrt{p}}\tilde{\vv}_{\hat\alpha_1}^{\top} (Y_2 - \bar{X}) \xrightarrow{P} \gamma_{1} \left( -\eta_1 (1- \cos^2\varphi)\delta^2 - \frac{\tau_1^2 - \tau_2^2}{n} \right)
\end{align*}
as $p \to \infty$. Using similar arguments, we can show the results for the case of $m = m_2$.
\end{proof}
\end{subsection}
\end{appendix}
\end{document}